\pdfoutput=1

\documentclass[11pt]{article}
\usepackage[utf8]{inputenc}  
\usepackage[british]{babel}
\usepackage{amsmath,amssymb,amsfonts,amsthm,bbm}
\usepackage{tikz}
\usepackage{enumerate,enumitem,mdframed}

\usepackage{crossreftools,booktabs}

\makeatletter
\newcommand{\optionaldesc}[2]{%
  \phantomsection
  #1\protected@edef\@currentlabel{#1}\label{#2}%
}
\makeatother

\usepackage{microtype}

\usepackage[mono=false]{libertine}
\usepackage[T1]{fontenc}
\usepackage[cmintegrals,libertine]{newtxmath}
\usepackage[cal=euler, calscaled=.95, frak=euler]{mathalfa}
\useosf
\usepackage[a4paper,vmargin={2cm,2cm},hmargin={2.25cm,2.25cm}]{geometry}
\usepackage[font=sf, labelfont={sf,bf}, margin=1cm]{caption}
\linespread{1.2}

\usepackage{numprint,ulem}
\normalem

\newlist{compitem}{itemize}{4}
\setlist[compitem,1]{nolistsep,label=$\bullet$}

\usepackage{makecell} 

\SetLabelAlign{CenterWithParen}{\makebox[1.6cm]{#1}}

\usepackage{hyperref}
\hypersetup{
	colorlinks=true,
		citecolor=blue!60!black,
		linkcolor=red!60!black,
		urlcolor=green!40!black,
		filecolor=yellow!50!black,
	breaklinks=true,
	pdfpagemode=UseNone,
	bookmarksopen=false,
}
\usepackage[capitalise]{cleveref}

\colorlet{link}{red!60!black}

\usepackage[section]{placeins} 

\theoremstyle{plain}
\newtheorem{theorem}{Theorem}[section]
\newtheorem{lemma}[theorem]{Lemma}
\newtheorem{proposition}[theorem]{Proposition}
\newtheorem{corollary}[theorem]{Corollary}

\theoremstyle{definition}

\theoremstyle{remark}
\newtheorem{remark}[theorem]{Remark}

\makeatletter
\newcommand{\labeltext}[3][]{%
    \@bsphack%
    \csname phantomsection\endcsname
    \def\tst{#1}%
    \def\labelmarkup{\textcolor{link}}
    \def\refmarkup{}%
    \ifx\tst\empty\def\@currentlabel{\refmarkup{#2}}{\label{#3}}%
    \else\def\@currentlabel{\refmarkup{#1}}{\label{#3}}\fi%
    \@esphack%
    \labelmarkup{#2}
}
\makeatother
\newcommand{\N}{\mathbb{N}} 
\newcommand{\R}{\mathbb{R}} 
\renewcommand{\P}{{\mathbb{P}}} 
\newcommand{\E}{\mathbb{E}} 
\newcommand{\1}{\mathbbm{1}} 

\newcommand{\proba}[1]{\mathbb{P}\left(#1\right)} 
\newcommand{\probak}[1]{\mathbb{P}^k\left(#1\right)} 

\newcommand{\e}{\varepsilon} 

\newcommand{\T}{\mathcal{T}} 


\newcommand{\h}{\mathsf{h}} 





\newcommand{\cv}[1][n]{\enskip\mathop{\longrightarrow}^{}_{#1 \to \infty}\enskip}
\newcommand{\cvloi}[1][n]{\enskip\mathop{\longrightarrow}^{(d)}_{#1 \to \infty}\enskip}
\newcommand{\cvps}[1][n]{\enskip\mathop{\longrightarrow}^{a.s.}_{#1 \to \infty}\enskip}
\newcommand{\cvproba}[1][n]{\enskip\mathop{\longrightarrow}^{\P}_{#1 \to \infty}\enskip}

\DeclareMathOperator*{\limit}{\longrightarrow}
\newcommand{\K}{\mathbb{K}}
\newcommand{\GH}{\text{GH}}
\DeclareMathOperator*{\supp}{supp}

\newcommand{\GP}{\text{GP}}

\newcommand{\GHP}{\text{GHP}}

\usepackage{mathtools}


\let\originalleft\left
\let\originalright\right
\renewcommand{\left}{\mathopen{}\mathclose\bgroup\originalleft}
\renewcommand{\right}{\aftergroup\egroup\originalright}

\newcommand{\fath}{\mathfrak f}
\newcommand{\trail}{\mathcal S}

\newcommand{\Part}{\mathcal{P}}
\title{Scaling limit of trees with vertices of fixed degrees and heights}
\author{
Arthur Blanc-Renaudie 
\thanks{Universit\'e Paris Saclay, \textsf{ablancrenaudiepro@gmail.com}. Supported by ERC consolidator grant 101087572 (SuPerGRandMa). 
} 
\qquad  
Emmanuel Kammerer 
\thanks{CMAP, \'Ecole polytechnique, Institut Polytechnique de Paris, 91120 Palaiseau, France, \textsf{emmanuel.kammerer@polytechnique.edu}
} 
}

\begin{document}
\date{\today}
\vspace{-2cm}
\maketitle 
\begin{abstract} We consider large uniform random trees where we fix for each vertex its degree and height. We prove, under natural conditions of convergence for the profile, that those trees properly renormalized converge. To this end, we study the paths from random vertices to the root using coalescent processes. As an application, we obtain scaling limits of Bienaymé--Galton--Watson trees in varying environment.
\end{abstract}

\begin{figure}[!ht]
\label{fig:ssimus_intro}
\centering
\includegraphics[width=0.65\linewidth]{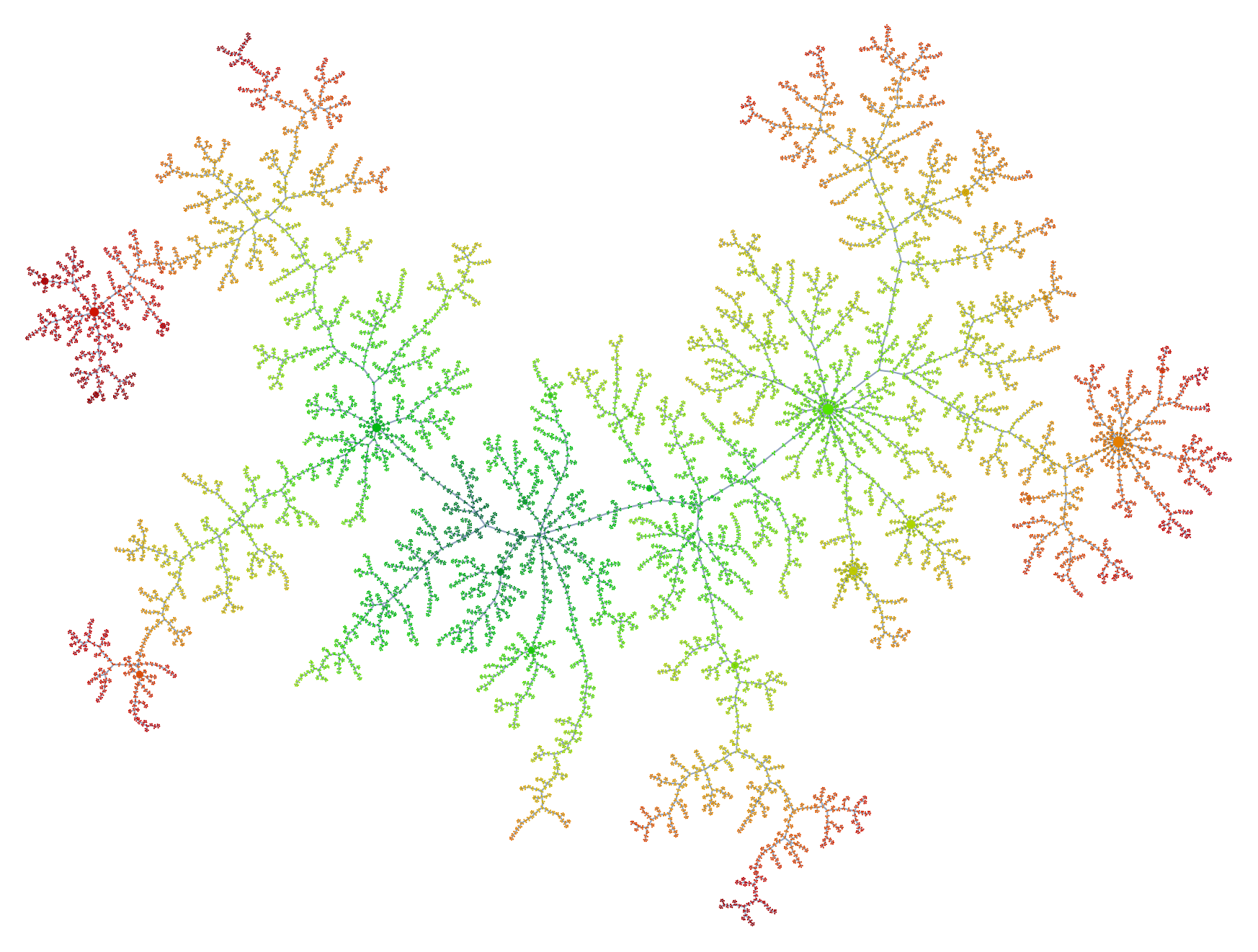}%
\qquad  \includegraphics[width=0.3\linewidth]{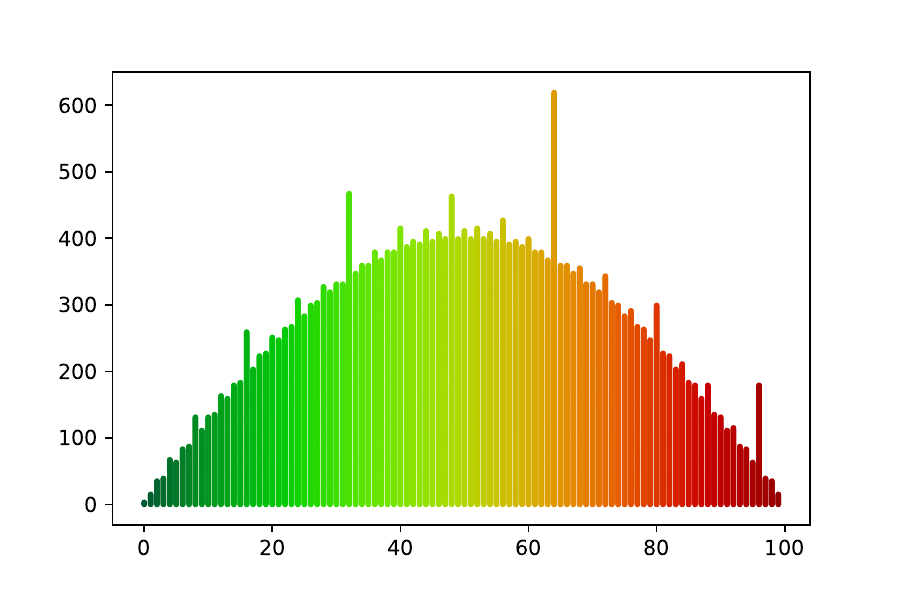}
\caption{Simulation of a uniform random tree of height $100$ whose profile scales towards $t \mapsto \sin( \pi t)$ with vertices of degrees $0$ and $4$ with some additional vertices with large degrees. The profile is drawn on the right. The low vertices are green while the high vertices are red. 
}
\end{figure}
\paragraph{Acknowledgements.} We thank the anonymous referee for their detailed reading and for their useful comments, which helped to improve the paper.
\section{Introduction}
\subsection{Model}
Trees with fixed degree sequence are universal models of random trees and are related to many other objects: Galton--Watson trees, configuration model, bipartite planar maps with fixed face degrees\dots The aim of this paper is to study 
a variant of this model where we fix for each vertex its degree and also its height. More precisely, we are interested in the shortest-path distance on the rooted labelled trees defined below.

Consider a set of vertices $\{(i,j)\}_{i\geq 0,j\geq 1}$. We consider 
$(i,j)$ to have height $i$ and degree (number of children) $d_{i,j}^n\geq 0$. We use a superscript $n\in \N$ as we study the behaviour of the trees as the degrees vary. For convenience, we assume that 
on every line $i\ge 0$ the sequence $(d^n_{i,j})_{j\ge 1}$ is non-increasing. Also, to ensure that every vertex $(i,j)$ of degree $d^n_{i,j}\geq 1$ is connected to the root $(0,1)$, we assume that  $d^n_{0,2}=0$ and
\begin{equation}
\forall i \ge 0, \qquad
    \# \{ j \ge1 ; \ d^n_{i+1,j}>0\} \le D^n_i \coloneqq \sum_{j\ge 1} d^n_{i,j}<\infty. \label{eq:CoherenceDegree}
\end{equation}
We construct the plane tree $\T^n$ height by height:  $(0,1)$ is the root, and independently and uniformly  for every height $i\geq 0$ we attach each vertex in $\{(i+1,j)\}_{1\leq j \leq D_i^n}$ to a vertex in $\{(i,j); \ {j \ge 1} , d^n_{i,j} \ge 1\}$ so that every vertex $(i,j)$ has exactly $d_{i,j}^n$ children (see Figure \ref{fig:Exemple}). Note that $\T^n$ has height $\h^n \coloneqq \inf \{i\ge 0; \ d^n_{i,1}=0 \}$. {The tree $\T^n$ is equipped with the shortest path distance, where each edge has length $1$.}

\begin{figure} 
\centering
\includegraphics[scale=0.44]{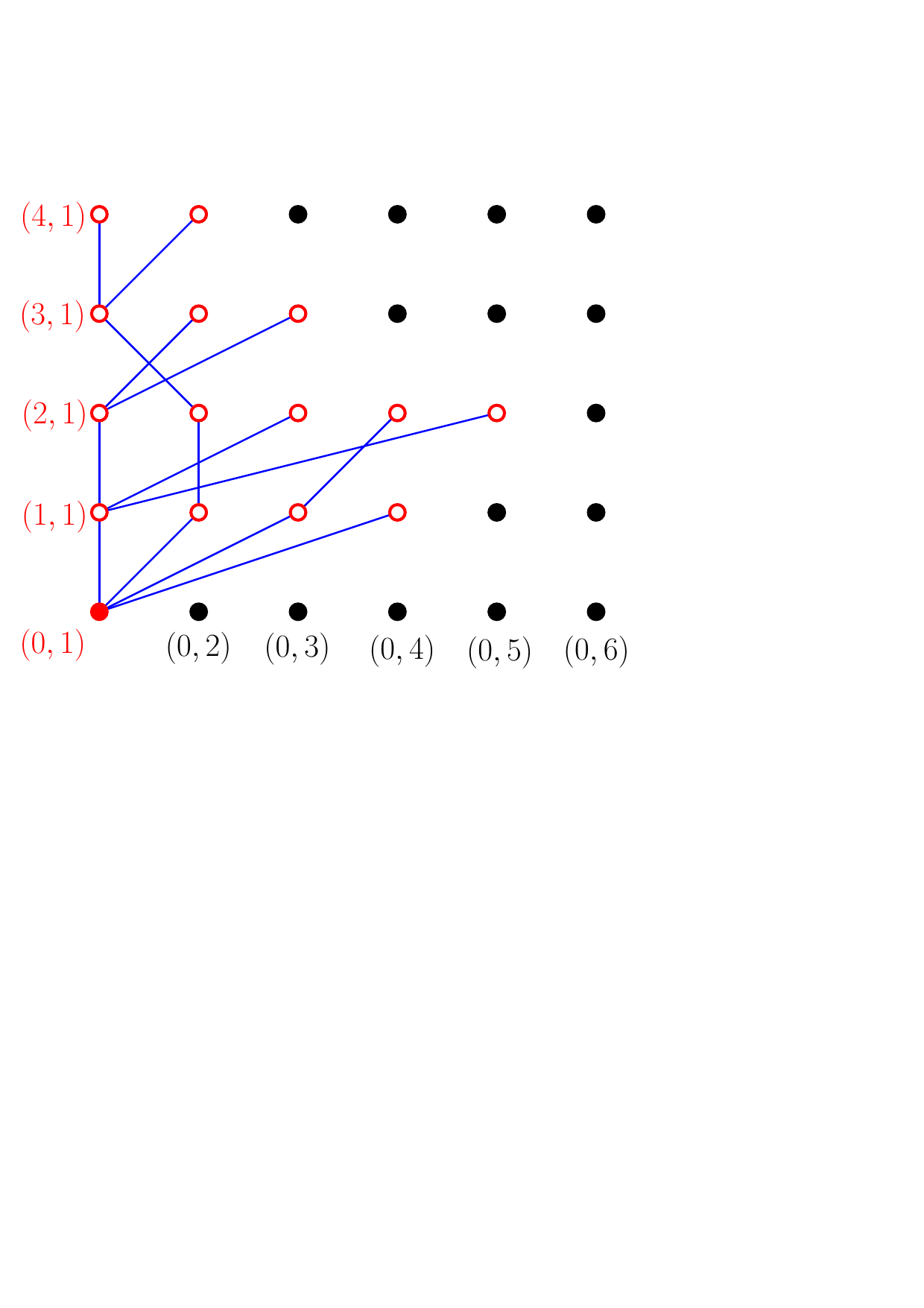}
\caption{An example of tree $\T$ such that $d_{0,1}=4$, $d_{1,1}=3$, $d_{1,2}=1$, $d_{1,3}=1$, $d_{2,1}=2$, $d_{2,2}=1$, $d_{3,1}=2$ and all the other degrees are null. The vertices of $\T$ are represented in red, the root as a red dot, {and} the other vertices as black dots. Here, $D_0=4, D_1=5, D_2=3, D_3=2, D_4=0$.} \label{fig:Exemple}
\end{figure}
\subsection{Main results}
We now present our assumptions on the degrees for the scaling limit of $\T^n$. Without loss of generality, our assumptions are written so that the random metric space $\T^n/n$ obtained after dividing the distances by $n$ converges in distribution. First we assume that the profile converges weakly: {we assume that} for {some} non-atomic probability measure  $\nu$ with compact support $\supp \nu = [0,1]$, we assume, writing $\delta_x$ for the Dirac measure on $x$, 
\begin{mdframed}[linecolor=black!60]
\begin{description}[topsep=0pt,itemsep=-1ex,partopsep=1ex,parsep=1ex,labelwidth=1.6cm,leftmargin=!,align=CenterWithParen]
\item[\textcolor{link}{\optionaldesc{$(\mathsf{Lim}_{\nu})$}{hyp:naissance}}]  $ \sum_{i\geq 0} \delta_{i/n}D^n_{i}/(\sum_{i\ge 0} D^n_i)$ converges weakly towards $\nu$ .
\end{description}
\end{mdframed}
Note that the above assumption is equivalent to {requiring} a scaling limit for the height of random vertices. It also implies that $\liminf \h^n/n \ge 1$

Then, to show the convergence of $(\T^n/n)$, we look at the \textbf{genealogies} of random vertices, \emph{that is their paths to the root}. The rate at which genealogies merge actually only depends on the degrees. We assume {informally that this rate converges}. We distinguish two cases for such merging: either the paths merge at a vertex with large degree, or they merge among the numerous vertices of small degrees. 

Concretely, we use a locally finite measure $\rho$ on $(0,1)$ to describe the merging rate due to small vertices. We also describe the limit of large degrees, using non negative parameters $\Theta=((\theta_j(t))_{j\ge 1})_{t\in (0,1)}$ satisfying {that}: 
\begin{itemize}
\item For every height $t\in (0,1)$, the sequence $(\theta_j(t))_{j\geq 1}$ is non-increasing.
\item For every $t\in (0,1)$, we have
$
\sum_{j\ge 1} \theta_j(t) \le 1.
$
\item The set $\{(j,t):\theta_j(t)\neq 0\}$ is countable. Moreover, for {all} real numbers $0<x<y<1$:
\begin{equation}\label{eq:pas de coalescence avec proba non nulle}
\sum_{x \le t \le y} \sum_{j\ge 1} \theta_j(t)^2 < \infty.
\end{equation}
\end{itemize}

We assume {that} as $n\to \infty$:

\vbox{
\begin{mdframed}[linecolor=black!60]
\begin{description}[topsep=0pt,itemsep=-1ex,partopsep=1ex,parsep=1ex,labelwidth=1.6cm,leftmargin=!,align=CenterWithParen]
\item[\textcolor{link}{\optionaldesc{$(\mathsf{Lim}_{\rho})$}{hyp:coalescence}}] 
{We have the following vague convergence of measures on $((0,1),\mathcal{B}((0,1)))$: 
\[\sum_{1\le i<n,j:d_{i,j}>0} \delta_{i/n} \binom{d^n_{i,j}}{2}/{\binom{D^n_{i}}{2}} \limit_{n\to \infty} \rho + \sum_{j,t:\theta_{j}(t)>0} \theta_j(t)^2\delta_{t}.
\] 
}
\item[\textcolor{link}{\optionaldesc{$(\mathsf{Lim}_{\Theta})$}{hyp:atomes}}] 
We have the following {vague} convergence on $((0,1)\times \R_{+}^*,\mathcal{B}((0,1)\times \R_{+}^*))$
\[
\sum_{1 \le i<n,j: d_{i,j}>0}  \delta_{(i/n, d^n_{i,j}/D^n_i)}
\cv[n]
\sum_{t,j:\theta_{j}(t)>0} \delta_{(t, \theta_j(t))}.
\]
\item[\textcolor{link}{\optionaldesc{$(\mathsf{Lim}_{\neq})$}{hyp:splitatomes}}] For every $\e>0$ there exists $\delta(\e)>0$ such that for every $n>0$, $i, i'$ with $\e n <i,i'<(1-\e ) n${,} $d^n_{i,1}>\e D_i^n$ and $d^n_{i',1}>\e D_{i'}^n$ we either have $i=i'$ or $|i-i'|>\delta(\e) n$. 
\end{description}
\end{mdframed}
}
We require the technical condition \ref{hyp:splitatomes} because the merging rate behaves really differently {depending on whether} vertices of large degree are at the same height or not. Morally, we use \ref{hyp:splitatomes} to ensure that in the limit tree vertices of large degree that are at the same height correspond to vertices at the same height in $\T^n$.

Lastly, while the previous assumptions are sufficient to prove the convergence of the distances between random vertices, we want the following tightness assumption to ensure that the limit space is well defined:

\begin{mdframed}[linecolor=black!60]
\begin{description}[topsep=0pt,itemsep=-1ex,partopsep=1ex,parsep=1ex,labelwidth=1.6cm,leftmargin=!,align=CenterWithParen]
\item[\textcolor{link}{\optionaldesc{$(\mathsf{Tight}_{\mathsf{GP}})$}{hyp:tightGP}}] $\nu$ has support $[0,1]$ and for all $0<a<b<1$, $\rho([a,b])>0$ or $\sum_{j\in \N,t\in 
[a,b]:\theta_{j}(t)>0} \theta_j(t)=\infty$. 
\end{description}
\end{mdframed}
\begin{remark} Aldous, Camarri, and Pitman \cite{introICRT1,introICRT2} used a similar condition to define inhomogeneous continuum random trees. Namely, with their notation $\theta_0>0$ or $\sum_{i\geq 1} \theta_i=\infty$. 
\end{remark}

\begin{theorem} \label{th:GP} Assume that for some $\nu, \rho,\Theta$ the conditions \ref{hyp:naissance},  \ref{hyp:coalescence}, \ref{hyp:atomes}, \ref{hyp:splitatomes}, \ref{hyp:tightGP} hold. Then there exists a random measured metric space $\T(\nu,\rho,\Theta)$ such that the following weak convergence holds for the Gromov--Prokhorov topology (see Appendix \ref{sec:background} for more details)
\[ \T^n/n \cvloi \T(\nu,\rho,\Theta),\]
{where we recall that $\T^n/n$ is the metric space obtained by dividing the shortest path distance on $\T^n$ by $n$, which is also equipped} with the uniform probability measure on its vertices.
\end{theorem}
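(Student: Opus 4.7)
The plan is to use the standard characterization of Gromov--Prokhorov convergence: it suffices to show that for every $k\ge 1$, the rescaled $k\times k$ distance matrix of $k$ i.i.d.\ uniform vertices $V_1^n,\dots,V_k^n$ of $\T^n$ converges in distribution, and that the resulting family of matrix laws is consistent in $k$. Since on any plane tree $d_{\T^n}(V_i^n,V_j^n)=h(V_i^n)+h(V_j^n)-2h(V_i^n\wedge V_j^n)$, everything reduces to identifying the joint scaling limit of the heights $h(V_i^n)/n$ and of the heights of the pairwise most recent common ancestors.

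By \ref{hyp:naissance} the rescaled heights converge jointly to $k$ i.i.d.\ samples from $\nu$, and by the uniform construction of $\T^n$ the sampled vertices are uniform at their respective heights conditional on those heights. I would then trace their ancestral lineages level by level toward the root: conditionally on $r$ distinct lineages at level $\ell$, the law of the induced partition at level $\ell-1$ is entirely determined by the degree sequence $(d^n_{\ell-1,j})_j$, and the probability that two given lineages coalesce at that step is $\sum_j\binom{d^n_{\ell-1,j}}{2}/\binom{D^n_{\ell-1}}{2}$, with analogous explicit formulas for more general merging patterns. The next step is to show that the rescaled ancestral coalescent converges, read from height $1$ down to $0$, to a continuous-time exchangeable coalescent with two kinds of mergers: binary mergers at the continuous rate $d\rho(t)$, coming from the accumulated small degrees, and instantaneous multi-way mergers at each $t\in(0,1)$ with $\theta_j(t)>0$, in which every lineage independently picks colour $j$ with probability $\theta_j(t)$ (or none, with probability $1-\sum_j\theta_j(t)$), and lineages sharing a common nonzero colour merge. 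This is exactly the coalescent description underlying the Aldous--Camarri--Pitman ICRTs, and its joint convergence together with the sample heights yields convergence of the rescaled distance matrix.

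Once this is established, the limiting matrix laws are automatically consistent in $k$, so Gromov's reconstruction theorem (Appendix \ref{sec:background}) delivers a random measured metric space $\T(\nu,\rho,\Theta)$, unique in law, whose $k$-sample matrices match the limit; assumption \ref{hyp:tightGP} is used here to ensure that this space is non-degenerate, with measure charging every height in $[0,1]$. The main obstacle is the coalescent convergence step. One has to control simultaneously (a) the accumulation of many small per-level coalescence contributions into a deterministic continuous rate $\rho$, which should follow from \ref{hyp:coalescence} combined with a second-moment/concentration argument showing that the random number of small-degree mergers concentrates around its mean, and (b) the rare large-atom jumps, which come from heights carrying some $\theta_j(t)>0$. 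The subtle point for (b) is to prove that each limiting atom corresponds asymptotically to a single level of $\T^n$ containing a vertex of large degree, rather than a bundle of nearby such levels that could produce dependent jumps; this is exactly what \ref{hyp:splitatomes} encodes, and checking that it rules out interference between nearby atoms (so that the jump colouring is asymptotically independent across atoms) is what I expect to require the most technical care.
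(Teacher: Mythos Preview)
Your plan matches the paper's route: reduce GP convergence to convergence of the $k\times k$ distance matrix, encode the heights of pairwise most recent common ancestors via the ancestral-lineage coalescent, and prove that this discrete coalescent converges to the continuous growth--coalescent with binary rate $\rho$ and paintbox mergers governed by $\Theta$. Your diagnosis of the main technical obstacle---separating small-degree contributions from large-degree ones, and using \ref{hyp:splitatomes} to prevent interference between nearby large atoms---is exactly right; the paper handles (a) via a Poisson approximation (Lemmas~\ref{lemme Bernoulli Poisson}--\ref{lemme distance en variation totale aux Bernoulli}) and (b) via explicit multinomial computations (Lemma~\ref{lem: cv grandes coalescences}), and assembles them into Skorokhod convergence of the coalescent (Proposition~\ref{prop: cv coalescent Skorokhod}).

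There is however one genuine gap in your reconstruction step. You write that once the distance matrix converges, consistency in $k$ plus ``Gromov's reconstruction theorem (Appendix~\ref{sec:background})'' delivers a random measured metric space, with \ref{hyp:tightGP} serving only to ensure non-degeneracy. This is not correct: consistency of the limiting $k$-matrices is \emph{not} sufficient for the limit to arise as the sampling matrix of some random measured metric space---this is precisely Aldous's subtle point. What is required (Proposition~\ref{B.1}) is \emph{weak leaf-tightness}: $\lim_{k\to\infty}\P\bigl(d(V_{k+1},\{V_1,\dots,V_k\})>\delta\bigr)=0$ for the limit variables. The paper devotes all of Section~\ref{sec:continuum_leaf-tight} to this, and it is exactly here---not as a side condition---that \ref{hyp:tightGP} enters: the alternative ``$\rho([a,b])>0$ or $\sum_{t\in[a,b],\,j}\theta_j(t)=\infty$'' forces enough coalescence on every subinterval to make the number $N_{k+1}(a-2\delta)$ of unmerged lineages satisfy $\E^k[N_{k+1}(a-2\delta)]=o(k)$ (Lemma~\ref{lem:end_lim_leaf-tight}). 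Without this argument the limit object $\T(\nu,\rho,\Theta)$ is not defined and the proof does not close.
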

The Gromov--Prokhorov topology essentially describes the distances between random vertices in $\T^n$. To study geometric variables that depend on all the vertices such as the diameter of $\T^n$, one needs a stronger topology. To this end, we as usual strengthen the convergence to  the Gromov--Hausdorff--Prokhorov topology (see Appendix \ref{sec:background}). We use the following tightness assumptions.

{Let $\|D^n\|_\infty\coloneqq \max_{0\leq i\leq \h^n} D_i^n$.} Then, for every $i\in \N, k>0$, let 
\begin{equation}\label{eq def tau}
\tau^n_{i}(k)\coloneqq\sum_{j\ge 1} \frac{d^n_{i,j}}{D^n_i} \min\left (k\frac{d^n_{i,j}-1}{D^n_i-1},1 \right),\end{equation}
with the convention $\tau^n_i(k)=\infty$ when $D^n_i\in \{0,1\}$. Then, for every $a<b$ let $\tau^n_{a,b}(k)\coloneqq\sum_{i\in [a,b]}\tau^n_i(k)$. {We assume that:}
\begin{mdframed}[linecolor=black!60]
\begin{description}[topsep=0pt,itemsep=-1ex,partopsep=1ex,parsep=1ex,labelwidth=1.6cm,leftmargin=!,align=CenterWithParen]
\item[\textcolor{link}{\optionaldesc{$(\mathsf{Tight}_\mathsf{GHP})$}{hyp:tightGHP}}] For every $0<\alpha<\beta<1$, for every $n\in \N$, $k\leq \|D^n\|_\infty$ large enough {(i.e.\@ there exist $n(\alpha, \beta), k(\alpha, \beta)$ such that for all $n\geq n(\alpha,\beta)$, $k\geq k(\alpha,\beta)$)}, for every $i\in [\alpha n,\beta n]$, we have
$$\tau_{i,i+n/(\log \log k)^2}^n(k)\geq \log(k). $$
\end{description}
\end{mdframed}
\begin{theorem} \label{th:GHP} Under the setting of Theorem \ref{th:GP} if furthermore $\h^n\sim n$ and the condition \ref{hyp:tightGHP} hold then 
$\T^n/n\limit \T(\nu,\rho,\Theta)$ 
weakly for the Gromov--Hausdorff--Prokhorov topology.
\end{theorem}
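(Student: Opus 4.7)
The plan is to upgrade the Gromov--Prokhorov convergence of Theorem \ref{th:GP} to the Gromov--Hausdorff--Prokhorov topology via a tightness argument: once GP convergence is in hand and $(\T^n/n)$ is GHP-tight, the GHP limit must coincide with the GP limit. GHP-tightness of $(\T^n/n)$ reduces to (i) tightness of the diameter and (ii) $\e$-covering tightness for each $\e>0$. Point (i) is immediate from \ref{hyp:h} since $\mathrm{diam}(\T^n)\le 2\h^n$ and $\h^n/n\to 1$. For point (ii) I would sample $k$ i.i.d.\ uniform random vertices $V_1,\ldots,V_k$, form the subtree $\T^n_k$ of $\T^n$ spanned by $\{(0,1),V_1,\ldots,V_k\}$, and show
\[
\lim_{k\to\infty}\limsup_{n\to\infty}\mathbb{P}\Big(\max_{w\in\T^n} d(w,\T^n_k)>\e n\Big)=0.
\]

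\textbf{Translation to the coalescent.} The event $d(w,\T^n_k)>\e n$ is equivalent to saying that the subtree $T_u$ of descendants of the ancestor $u=w_{h(w)-\e n}$ contains no $V_j$; equivalently, we need to rule out ``branches'' of $\T^n\setminus\T^n_k$ of length $>\e n$. The top of such a branch is a vertex $v$ whose parent lies in $\T^n_k$ but with $T_v\cap\{V_j\}=\emptyset$ and a descendant at depth $\ge\e n$ below $v$. The probability that a fixed $v$ has no $V_j$ in its subtree is $(1-|T_v|/|\T^n|)^k$, which is directly accessible from the coalescent on $V_1,\ldots,V_k$ used in the proof of Theorem \ref{th:GP}: $v$ is missed exactly when no alive lineage passes through $v$. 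Thus it suffices to show, uniformly over candidate branch-tops, that with probability close to one some lineage visits each subtree whose vertical extent is at least $\e n$.

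\textbf{Use of the new assumptions.} The quantitative input is \ref{hyp:tightGHP}: on any inner region $[\alpha n,\beta n]$, every window of height $n/(\log\log k)^2$ accumulates coalescence rate $\ge\log k$. I would slice the bulk of the height interval into such windows, apply exponential concentration for the coalescent inside each window to deduce that $k$ lineages either contract multiplicatively or spread to cover every macroscopic subtree, and then union-bound over candidate branch-top vertices. The degree-growth assumption \ref{hyp:tightexp}, namely $\log\log\|D^n\|=o(n)$, is what allows $k$ to be taken large enough (up to $\|D^n\|$) for the factor $e^{-\log k}$ from concentration to beat the polynomial-in-$\|D^n\|$ size of the vertex set being union-bounded.

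\textbf{Main obstacle.} The principal difficulty is to upgrade the \emph{aggregated} coalescence estimate \ref{hyp:tightGHP} (which controls expected pairwise coalescence) into a \emph{uniform} high-probability statement: every sufficiently deep subtree is hit by some lineage. This requires exponential-moment or martingale control of the coalescent across each window of height $n/(\log\log k)^2$, together with a careful separate treatment of those heights where a single vertex carries a macroscopic fraction of the offspring. Here \ref{hyp:splitatomes} is crucial: it ensures that such atomic heights are well-separated, so at most one lies in any given window and the remaining windows obey clean coalescent concentration. Once these concentration estimates are established, the tightness of both the diameter and the $\e$-covering follow, and combining with Theorem \ref{th:GP} yields the announced GHP convergence.
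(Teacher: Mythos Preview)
Your overall strategy is sound: strong leaf-tightness together with the GP convergence of Theorem \ref{th:GP} does yield GHP convergence (this is exactly Proposition \ref{B.2}), and you correctly identify the heart of the matter as proving that every vertex of $\T^n$ lies within $\e n$ of the subtree spanned by $k$ uniform vertices.

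However, there is a genuine gap in how you propose to close the argument. You want to work directly with the coalescent of $k$ uniform vertices and invoke ``exponential concentration for the coalescent inside each window''. The paper explicitly notes at the start of Section \ref{sec:proofGHP} that this direct route is obstructed: one has \emph{poor control on the number of genealogies that have not yet merged at a given height}. Concretely, the bound coming from \ref{hyp:tightGHP} has the form $e^{-\tau^n_{i'}(k)/2}$ for the probability that one lineage fails to hit any of $k$ \emph{distinct} marked vertices at height $i'$. With the genealogies of $k$ uniform points, the number of distinct lineages at height $i'$ is random and could be much smaller than $k$, so you cannot insert $k$ into $\tau^n_{i'}(\cdot)$; the concentration you need is simply not available. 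Your proposal also conflates two regimes of $k$: leaf-tightness requires $k$ fixed before $n\to\infty$, yet you simultaneously propose taking $k$ up to $\|D^n\|$ to beat a union bound over all vertices.

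The paper's resolution is a new device, the \emph{$k$-trail}: a deterministic-size scaffold with \emph{exactly} $k$ marked vertices at every height, constructed so that fathers stay inside the trail (properties \ref{cons:height}--\ref{cons:inde}). With exactly $k$ vertices at each height, the merging probability computation becomes clean and yields the $e^{-\tau^n_{i'}(k)/2}$ bound directly (Lemma \ref{lem:chain}). One then chains $K\to K^2\to K^4\to\cdots\to K^{2^N}\ge \|D^n\|$; the number of steps is $N=O(\log\log\|D^n\|)=o(n)$ by \ref{hyp:tightexp}, and the Hausdorff errors $9\lceil n/(\log\log K^{2^i})^2\rceil$ sum to $O(n/\log\log K)$ (Proposition \ref{pro:chain}). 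Only \emph{afterwards} does one compare the fixed $K$-trail to the genealogies of uniform vertices (Section 4.2), which is a separate and easier step.

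Finally, your attribution of a key role to \ref{hyp:splitatomes} in the GHP upgrade is mistaken: that hypothesis is used in Section \ref{sec:proofGP} for the GP limit, but the tightness argument of Section \ref{sec:proofGHP} relies only on \ref{hyp:h}, \ref{hyp:tightexp} and \ref{hyp:tightGHP}.
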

\begin{remark} 
Writing $\psi_i^n:k\mapsto k\tau_i^n(k)$, one may see $\psi$ as the analog for our model of the Laplace exponent of Lévy trees and Bienaymé--Galton--Watson trees. {Indeed, as usually done in the literature (see e.g.\@ \cite{DLG05}), one may associate the degrees with the jumps of the Lévy process; and in the same way that the jumps contribute linearly or quadratically to the Laplace exponent of Lévy process $\psi(l)$ according to whether they are larger or smaller than $1/l$, the degrees $d_{i,j}^n$ contribute linearly or quadratically to $\tau_i^n(k)$ according to whether they are larger or smaller than $D_i^n/k$.} 
Moreover, \ref{hyp:tightGHP} may be seen as an integrability condition for $1/\psi$ { as in Equation (7) of \cite{DLG05}}. The main difference is that we morally need this integrability condition  at all heights. Due to that, although condition \ref{hyp:tightGHP} is sub-optimal, it seems strong enough to deal with the applications  below. 
\end{remark}
\subsection{Connections with previous works and applications}

When one only prescribes the degrees of the vertices but not their heights, one obtains the well studied model of uniform trees with fixed degree sequence, whose scaling limit is studied for example in \cite{BM14}, \cite{BR21} and also in \cite{Mar18} with applications to Bienaymé--Galton--Watson trees and random planar maps. This model is the analogue of the configuration model for random trees. Note that if one conditions on the heights of the vertices in a uniform tree with fixed degree sequence, then one obtains a uniform random tree with fixed degrees and heights. Moreover, the paper \cite{AUB20} gives conditions for the scaling limit of the profile of uniform trees with fixed degree sequence. Checking our assumptions in order to give a new proof of the scaling limit of uniform trees with fixed degree sequence thus seems feasible. {In this example and in the next two paragraphs, the associated $\nu$, $\rho$ and $\theta$ may be random.}

Besides, if one considers a Galton--Watson process in varying environment (GWVE) and conditions on the evolution of the total population, i.e.\@ the profile of the tree, and on the number of children of each individual, i.e.\@ the degrees, then the corresponding tree is a uniform random tree with fixed degrees and heights. As a result, our result provides the scaling limit of GWVE trees as soon as the hypothesis are satisfied by the profile and the degrees. The first scaling limit of the profile for GWVE trees was obtained by Kurtz \cite{Kur78} under a third moment assumption. These results where subsequently extended and general conditions for the convergence of the profile can be found in \cite{BS15} or in \cite{FLL22}. See \cref{sec:appBGWTVE} for an application of our results to GWVE trees using the scaling limit of the profile of \cite{BS15}. {Informally, under some assumptions which essentially ensure that the scaling limit of the profile from \cite{BS15} holds, we obtain scaling limits of GWVE trees.}

In the same direction, the scaling limit of the size of the $n$-th generation conditioned on survival up to time $n$ was obtained in \cite{Ker20} for GWVE under mild assumptions. Under a second moment assumption, the genealogy of the individuals at this $n$-th generation in GWVE trees was studied in \cite{HPP22} in the critical case, and in \cite{BFRS22} in the near critical case with a GHP scaling limit of the genealogy of the $n$-th generation in this latter work. In \cite{CKKM22}, for i.i.d. environments, in the critical case and under a second moment assumption, a GHP scaling limit towards the Brownian continuous random tree was obtained. Although we do not wish to pursue in that direction, our results should be useful to study the stable case.

Finally, let us briefly mention that our approach is similar to the one we recently developed with Bellin and Kortchemski in \cite{BBKK23+,BBKKbis23+} to study (critical) uniform attachment trees with freezing. However, as we no longer work in a Brownian case only, we need to use new techniques and ideas.

\paragraph{Plan of the paper:} The rest of the paper is organized as follows. In \cref{sec:deflimit} we define the limit tree appearing in Theorems \ref{th:GP} and \ref{th:GHP}. In  \cref{sec:proofGP} we prove \cref{th:GP}. In \cref{sec:proofGHP} we show \cref{th:GHP}. Finally in \cref{sec:appBGWTVE}, which can be read right after the introduction, we discuss an application of our result to Bienaym\'e--Galton--Watson trees in varying environment. Appendix \ref{sec:background} recalls the standard notions about the GP, GH, and GHP topologies. \cref{sec:leaf} recalls the classical notion of leaf-tightness.
\section{Definition of the limit} \label{sec:deflimit}
In this section, we define the limit trees appearing in Theorem \ref{th:GP} in a rather implicit way: we first construct, using a growth-coalescent process, a random infinite matrix which describes the distances between i.i.d.\@ random points. We then prove, under the condition \ref{hyp:tightGP}, leaf-tightness for this matrix, hence defining the limiting random measured metric space (see Proposition \ref{B.1}).

\subsection{Description of the distance matrix between uniform vertices in the limit tree}\label{sec: coalescent continu}
We go downward in the tree from random vertices to the root, and describe how their genealogies merge. This description gives both their heights and the heights of their most recent common ancestors, and so the distances between those vertices. We start with an informal construction of the genealogies' coalescent{.}

Let $k$ particles be born at i.i.d.\@ times (heights) of law $\nu$. On each interval $[t,t-\mathrm{d} t]$, two particles merge into a cluster at rate $\rho(\mathrm{d} t)$. Moreover, at every time $t$ such that $\theta_1(t)>0$, the clusters merge according to the sub-probability measure $(\theta_j(t))_{j \ge 1}$ as follows: independently, each cluster is associated with some random integer $j$ in $\N \cup \{0\}$ with probability $\theta_j(t)$ when $j\ge 1$ and $\theta_0(t)\coloneqq1-\sum_{j'\ge 1} \theta_{j'}(t)$ when $j=0$. 
Then, the clusters  associated to $0$ do not merge whereas for all $j \ge 1$, the clusters associated to $j$ merge together into one cluster. Finally, at time zero, all the {clusters} which have not merged yet coalesce.

Here the birth time of the particle should be seen as the height of random vertices in the tree, and the merging time of two clusters as the height of their most recent common ancestor. And, depending on the degree of those ancestors ({$2$} or $\infty$), the coalescences occur differently (using respectively $\rho$ or $\Theta$).

Formally, let us use the same notation as for coalescent processes where each particle is associated with an integer, where clusters are described by sets of integers and where each coalescence is performed by replacing two clusters by their union. See e.g.\@ \cite{Pit06} or \cite{Ber06} for the use of this notation in the classical theory of coalescent and fragmentation processes.
We consider three types of independent random events:
\begin{itemize}
\item[] \textbf{Birth:} For every $r\in \N$, let $H_r$ be a random variable in $[0,1]$ of law $\nu$; 
\item[] \textbf{Small merge:} For every $q<r\in \N$, let $\Gamma_{q,r}$ be a Poisson point process on $[0,1]$ of intensity $\rho(\mathrm{d} t)$;
\item[] \textbf{Large merge:} For every $t$ such that $\theta_1(t)>0$ and $r\in \N$, let $\Theta_{t,r}$ be a random variable of law given by $\proba{\Theta_{t,r}=j}=\theta_j(t)$ for every $j\in \N\cup \{0\}$.  We write $S^\Theta(t,j)\coloneqq \{r\in \N,\  \Theta_{t,r}=j \}$ for all $j \ge 1$.
\end{itemize}
From there we describe a càglàd (left continuous with right limit) jump process $((\Part_r^k(t))_{1\leq r \leq k})_{1\geq t \geq 0}$ which is called a growth-coalescent process. The process starts at $\Part_1^k(1)=\Part_2^k(1)=\dots =\Part_k^k(1)=\emptyset $ and its jumps occur only at some time $t$ satisfying one of the following conditions: 
\begin{itemize}
\item[] \textbf{Birth:} If $t=H_r$ for some $1\leq r \leq k$, then $\Part_r^k(t^+)=\{\emptyset\}$, and $\Part_r^k(t)=\{r\}$;
\item[] \textbf{Small merge:} If $t\in \Gamma_{q,r}$ for some $q<r$, then whenever $\Part_q^k(t^+)\neq \emptyset$ and $\Part_r^k(t^+)\neq \emptyset$ we have $\Part_q^k(t)=\Part_q^k(t^+)\cup \Part_r^k(t^+)$ and $\Part_r^k(t)=\emptyset$;
\item[] \textbf{Large merge:} If $\theta_1(t)>0$, then for every $j\in \N$ such that $\# (S^\Theta (t,j) \cap [k])\ge 2$, we write $m^k(t,j)$ for the smallest integer $r\in S^\Theta(t,j)\cap[k]$ such that $\Part_r^k(t^+)\neq \emptyset$. We have:
\[ \forall r\in (S^\Theta(t,j)\cap[k]) \setminus \{m^k(t,j) \}, \quad \Part_r^k(t)=\emptyset, \quad \text{and} \quad  \Part_{m^k(t,j)}^k(t)=\bigcup_{r\in S^\Theta(t,j)\cap[k]} \Part_{r}^k(t^+).\]
\end{itemize}
There is a final jump at time $0$ to reach the configuration $\Part_1^k(0)=\{1,\dots, k\}$ and $\Part_2^k(0)=\dots=\Part_k^k(0)=\emptyset$.
\begin{lemma}\label{lemme coalescent continu bien defini} The process $((\Part_r^k(t))_{1\leq r \leq k})_{0\leq t\leq 1}$ is a.s.\@ well defined.
\end{lemma}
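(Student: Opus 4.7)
The plan is to enumerate the state changes of the process in decreasing time order, showing that the procedure terminates almost surely because only finitely many times in $(0,1]$ produce actual modifications of the partition. The starting observation is a deterministic count: if $N(t) = \#\{r : \Part^k_r(t) \ne \emptyset\}$, then each birth event increases $N$ by one and each merge that is not a no-op decreases it by at least one. Since $N$ takes values in $\{0,1,\dots,k\}$, starts at $0$, and never drops below $1$ once the first birth has occurred, there are at most $k$ births and at most $k-1$ effective merges, hence at most $2k-1$ state changes in $(0,1]$ before the prescribed final jump at $t = 0$.

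Using this, I would construct the process backward in time by setting $T_0 = 1$, $P_0 = (\emptyset,\dots,\emptyset)$, and then inductively taking $T_{n+1}$ to be the largest time $t < T_n$ at which an event -- a birth $H_r$, a Poisson point $\tau \in \Gamma_{q,r}$, or an atom of $\Theta$ -- produces an actual change relative to $P_n$; the new state $P_{n+1}$ is then obtained by applying the corresponding jump rule. The core point is that this supremum is almost surely attained:
\begin{compitem}
\item Since $\nu$ is non-atomic, the birth times $H_r$ are a.s.\ distinct and lie in $(0,1)$; the finitely many unborn particles at stage $n$ contribute a finite candidate set which has a maximum.
\item Because $\rho$ is locally finite on $(0,1)$ and $T_n \in (0,1)$, one has $\rho((T_n-\delta, T_n)) < \infty$ for small $\delta>0$, so each Poisson process $\Gamma_{q,r}$ has no accumulation at $T_n$ and $\Gamma_{q,r} \cap (0, T_n)$ attains its supremum.
\item Although atoms of $\Theta$ may be dense near $T_n$, the conditional expected number of atoms in $(T_n-\delta, T_n)$ at which two non-empty clusters of $P_n$ receive the same positive label is bounded by $\binom{k}{2} \sum_{t \in (T_n-\delta, T_n)} \sum_{j \ge 1} \theta_j(t)^2$, which is finite by~\eqref{eq:pas de coalescence avec proba non nulle}; hence effective large merges also do not accumulate at $T_n$.
\end{compitem}

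Combining these facts, $T_{n+1}$ is well defined whenever effective events remain in $(0, T_n)$. The deterministic bound then forces the procedure to terminate within at most $2k-1$ steps, yielding an a.s.\ well-defined càglàd step function on $(0,1]$ which is extended to $[0,1]$ by the prescribed final jump at time $0$. The main obstacle is the third bullet above -- controlling the possibly dense set of atoms of $\Theta$ near $T_n$ -- and this is precisely where the square-summability condition~\eqref{eq:pas de coalescence avec proba non nulle} is used.
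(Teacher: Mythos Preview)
Your proof is correct and rests on the same three estimates as the paper's: finiteness of the birth set $\{H_r\}$ (since $\nu$ is non-atomic), local finiteness of each $\Gamma_{q,r}$ on $(0,1)$ (since $\rho$ is locally finite), and a.s.\ local finiteness of the times $t$ at which two of the $k$ particles receive the same positive $\Theta$-label (from~\eqref{eq:pas de coalescence avec proba non nulle}). The organization differs: the paper argues directly that for every $a>0$ the set of \emph{potential} jump times in $[a,\max_r H_r]$ is a.s.\ finite, which settles well-definedness on each $[a,1]$ and hence on $(0,1]$; you instead run an inductive backward construction and terminate it via the deterministic bound of $2k-1$ on \emph{actual} state changes. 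Your counting argument is a pleasant observation---the paper only invokes a version of it \emph{after} the lemma to deduce finiteness of jumps on all of $[0,1]$---but it is not strictly needed here, since local finiteness of potential jump times already does the job. One phrasing point: your three bullets establish non-accumulation at $T_n$, whereas what you need is that the supremum of effective event times in $(0,T_n)$ is attained, i.e., non-accumulation at that supremum; this is fine because your estimates actually give local finiteness at every point of $(0,1)$, so the set of effective event times has no accumulation point in $(0,1)$ and its supremum, if in $(0,1)$, is a maximum.
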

\begin{proof}
To see that such a process is well defined, it suffices to focus on the number of possible jumps.
Births only occur at $\{H_r\}_{1\leq r \leq k}$ which is a finite subset of $(0,1)$ since $\nu$ has no atoms at zero or one. Note that for $1\leq r \leq k$ and $t>H_r$ we must have $\Part_r^k(t)=\emptyset$ so that the highest jump occurs at time $\max_{1\leq r \leq k} H_r$.

Then, for every $a>0$, we 
prove that there is only a finite amount of possible times for small merges and large merges in $[a, \max_{1 \le r \le k} H_r ]$, so that the process $((\Part_r^k(t))_{1\leq r \leq k})_{0\leq t\leq 1}$ is well defined on $[a, 1 ]$. Since a.s. $\Part^k_1(0)=\{1,\dots, k\}$ and $\Part_2^k(0)=\dots=\Part_k^k(0)=\emptyset$, and since $a$ is arbitrary this will conclude the proof.

Small merges can only occur when $t\in \Gamma_{q,r}$ for some $1\leq q<r\leq k$. And a.s. $\Gamma_{q,r}$ is locally finite on $(0,1)$ as a Poisson point process of rate $\rho$ which is locally finite on $(0,1)$. 

For the large merges, note {that} for a jump to occur at time $t$ there must exist $j\in \N$ with $\#S^\Theta(t,j)\cap[k]\geq 2$. This occurs for every time $t$ with probability upperbounded by $k^2\sum_{j=1}^\infty \theta_j(t)^2$. Thus thanks to \eqref{eq:pas de coalescence avec proba non nulle}, a.s. the number of possible times $t \in[a, \max_{1 \le r \le k} H_r ]$ where large merges can occur is finite. 
\end{proof}
\begin{lemma} A.s. for every $0\leq t\leq 1$ and  $1\leq r \leq k <k'$ we have $\Part_r^{k'}(t)\cap[1,\dots, k]=\Part_r^{k}(t)$.
\end{lemma}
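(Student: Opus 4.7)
The plan is to use a coupling argument: both processes $\Part^{k}$ and $\Part^{k'}$ are constructed from the \emph{same} collection of random inputs $(H_r,\Gamma_{q,r},\Theta_{t,r})$, so it suffices to check that the restriction $\tilde{\Part}_r(t)\coloneqq \Part_r^{k'}(t)\cap[1,\dots,k]$ obeys the same deterministic transition rules as $\Part_r^k(t)$. By the previous lemma, applied to $k'$, on a full-measure event there are only finitely many transition times in any interval $[a,1]$, so one can proceed by reverse induction on these times (starting from $t=1$ where both sides are empty).

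The first step is to establish the invariant
\[
\Part_r^{k'}(t)\neq\emptyset \implies \min \Part_r^{k'}(t)=r,
\]
by induction on the jump times: a birth sets $\Part_r^{k'}$ to $\{r\}$; a small merge stores the union at the smaller index $q$; a large merge stores the union at $m^{k'}(t,j)$, which is by definition smaller than every other populated index being merged. The analogous property holds for $\Part^{k}$. A direct consequence is that if $r>k$, then $\Part_r^{k'}(t)\cap[k]=\emptyset$, so the restriction $\tilde{\Part}$ is only nontrivial at positions $r\le k$, and for such $r$ one has $\tilde{\Part}_r(t)\neq\emptyset \iff \Part_r^{k'}(t)\neq\emptyset$.

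With these observations, the inductive step is a case analysis on the type of event producing a jump in $\Part^{k'}$. Births, small merges at $\Gamma_{q,r}$ with $q<r\le k$, and large merges with $m^{k'}(t,j)\le k$ are easy: they mirror exactly the corresponding events in $\Part^k$, and the invariant guarantees that non-emptiness is preserved under restriction. The events that require care are those involving at least one index $>k$. For a small merge $\Gamma_{q,r}$ with $q\le k<r$, the cluster $\Part_r^{k'}(t^+)$ has minimum $r>k$ so its restriction is empty, hence $\tilde{\Part}_q$ is unchanged, matching the fact that this event is absent from the dynamics of $\Part^k$.

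The main obstacle is the large-merge case, where one must reconcile $m^{k'}(t,j)$ and $m^k(t,j)$. If the smallest populated index $r_1\in S^\Theta(t,j)\cap[k']$ satisfies $r_1\le k$, then by the invariant and induction hypothesis the sets of populated indices in $S^\Theta(t,j)\cap[k]$ coincide for $\Part^{k'}$ and $\Part^k$, so $m^{k'}(t,j)=m^k(t,j)=r_1$, and the union taken over $S^\Theta(t,j)\cap[k']$, restricted to $[k]$, equals the union over $S^\Theta(t,j)\cap[k]$ because clusters at positions $>k$ contribute nothing. If instead $r_1>k$, then all clusters at positions in $S^\Theta(t,j)\cap[k]$ are empty in $\Part^{k'}$, hence in $\Part^k$ (by induction), so the event is vacuous on both sides; and the merged cluster stored at $r_1>k$ in $\Part^{k'}$ has minimum $r_1>k$, leaving $\tilde{\Part}$ unchanged. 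The final deterministic jump at $t=0$ collapses everything into position $1$ in both processes, giving $\tilde{\Part}_1(0)=\{1,\dots,k\}=\Part_1^k(0)$ and concluding the induction.
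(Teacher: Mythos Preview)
Your proof is correct and follows exactly the approach the paper indicates: a downward induction on the (finitely many, by the previous lemma) jump times. The paper in fact omits all details, stating only ``the result follows by a simple (but tedious) downward induction on $t$. We omit the details.'' Your write-up supplies those details, and the auxiliary invariant $\min\Part_r^{k'}(t)=r$ whenever $\Part_r^{k'}(t)\neq\emptyset$ is a clean device that makes the large-merge case transparent; in particular it immediately gives that blocks stored at indices $>k$ never contain labels $\le k$, which is the heart of the consistency.
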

\begin{proof} Since the number of jumps is locally finite on $(0,1)$ the result follows by a simple (but tedious) downward induction on $t$. We omit the details. 
\end{proof}
The above consistency condition allows us to define for every $r\in \N$, $0\leq t \leq 1$, $\Part_r(t)\coloneqq\bigcup_{k\in \N}\Part_r^k(t)$.

Also, note that below $\min_{1 \le r \le k} H_r$, there are no birth times anymore for the process $((\Part_r^k(t))_{1\leq r \leq k})_{0\leq t\leq 1}$, so that the number of $r\in [k]$ such that $\Part^k_r(t) \neq \emptyset$ can not increase when $t$ decreases {below $\min_{1 \le r \le k} H_r$}. Therefore, the process $((\Part_r^k(t))_{1\leq r \leq k})_{0\leq t\leq 1}$ undergoes only a finite number of jumps in $[0,1]$. The genealogy of the $k$ particles is then described by a finite tree $\T_k$ whose leaves, denoted by $V_1,\ldots,V_k$ correspond to the births of each particle and whose internal vertices correspond to the coalescences between clusters. Each vertex $v$ of $\T_k$ is associated with a time $t_v$ corresponding to the birth time if $v$ is a leaf or the coalescence time {if} $v$ is an internal vertex. We equip this tree with the distance induced by the times of birth and coalescence: for every pair $u,v$ of vertices of $\T_k$, the distance $d(u,v)$ between $u$ and $v$ is given by
$$
d(u,v) = t_u+t_v-2c(u,v),
$$
where $c(u,v)$ is the first time at which the clusters of $u$ and $v$ coalesce. Note that if $V_q,V_r$ are leaves of $\T_k$ corresponding to the particles born at times $H_q,H_r$ for $1 \le q<r\le k$, then 
\begin{equation}\label{eq lien distance coalescent continu}
d(V_q,V_r)= H_q+H_r - 2\max \{t \in [0,1], \ \exists m\in [k], \ q,r \in \mathcal{P}^k_m(t) \}.
\end{equation}
Since the above equation can also be written
$$
d(V_q,V_r)= H_q+H_r - 2\max \{t \in [0,1], \ \exists m\ge1,  \ q,r \in \mathcal{P}_m(t) \},
$$
the random variable $d(V_q,V_r)$ does not depend on $k$.
\subsection{Leaf-tightness} \label{sec:continuum_leaf-tight}

While the previous section defines the matrix of distances between random vertices of the limit trees, it is not enough to properly define the limit. This difference is quite subtle and has been extensively studied by Aldous in \cite{Ald93}. As recalled in Appendix \ref{sec:leaf}, {more precisely by applying Proposition \ref{B.1} to the increasing sequence of the random trees $(\T_k,d)$ for $k\ge 1$ defined in the previous subsection, equipped with the uniform probability measure on the leaves $\{V_1, \ldots, V_k\}$ and with the distance $d$ defined in \eqref{eq lien distance coalescent continu},} to properly define the {measured real tree $\T(\nu, \rho , \Theta)$ as the (increasing) limit when $k\to \infty$ of the $\T_k$'s when $k\to \infty$,} it is enough to prove the following weak leaf tightness criterion: 
\[ \forall \delta>0, \qquad \lim_{k\to \infty} \proba{d(V_{k+1},\{V_1,V_2,\dots, V_k\})>{5}\delta}=0.\]
Since $H_{k+1}$ (the height of $V_{k+1}$) is sampled according to the probability measure $\nu$ which satisf{ies} $\nu(\{0\})=0$, we may assume for some $a>0$ that $a<H_{k+1}<a+\delta$. Also, since, under \ref{hyp:tightGP}, $\nu$ has support $[0,1]${,} by taking $k$ large enough, we may assume that $a-\delta<H_1,H_2,\dots, H_k<a$. ({In} detail, by considering enough vertices, we can have among them as many as we want {with probability arbitrarily close to $1$} at height between $a-\delta$ and $a$.) In other words, it suffices to check that for every $0<a\leq 1$ and $\delta>0$,
\[ \lim_{k\to \infty}  \proba{d(V_{k+1},\{V_1,V_2,\dots, V_k\})>{5}\delta \middle | H_{k+1}\in [a,a+\delta], \forall 1\leq i \leq k,\, \,  H_i\in[a-\delta,a]}=0.\]
Then we write $\probak{\cdot}=\P(\cdot|H_{k+1}\in [a,a+\delta], \forall 1\leq i \leq k, \, \, H_i\in[a-\delta,a])$. Define similarly the associated expectation $\E^k$ for later. And we recall that for all $i\neq j$, the random variable $c(V_i,V_j)$ is the coalescence time between (the genealogies of) $V_i$ and $V_j$, that is the height of their most recent common ancestor. So that by \eqref{eq lien distance coalescent continu}, to prove the above it suffices to show
\begin{equation} \lim_{k\to \infty}  \probak{\forall 1\leq i \leq k, \ c(V_i,V_{k+1})<a-2\delta}=0.\label{eq:desired_limit_leaf_tight}\end{equation}

To that end we adapt an argument from Section 6.5 of \cite{BBKKbis23+}. First for every $1\leq i\leq k$, $x\geq 0$, consider the event
\[ E_{i,k}(x)\coloneqq \{H_i>x, \forall j\in [k]\backslash \{i\}, \  c(V_i,V_j)<x \}, \]
that is the event that $V_i$ has height at least $x$ and has not coalesced with any other vertices among $V_1,\dots, V_k$ above height $x$. Then let
\[ N_k(x)\coloneqq\#\{1\leq i \leq k, E_{i,k}(x) \}. \]
To prove \eqref{eq:desired_limit_leaf_tight}, we rely on the simple but crucial remark: under $H_{k+1}>H_1,\dots, H_k$, $V_{k+1}$ has the most chance among $V_1,V_2,\dots, V_{k+1}$ to have coalesced with one of the others. In other words, 
\[ \probak{E_{k+1,k+1}(a-2\delta)} \leq \frac{1}{k+1}\sum_{i=1}^{k+1} \probak{E_{i,k+1}(a-2\delta)} = \frac{1}{k+1} \E^k\left [N_{k+1}(a-2\delta)\right ].\]
Note that the left term above is exactly the probability in \eqref{eq:desired_limit_leaf_tight}, so to define the limit it suffices to prove:
\begin{lemma} \label{lem:end_lim_leaf-tight} For any choice of $a,\delta>0$ as $k\to \infty$, $\E^k[N_{k+1}(a-2\delta)]=o(k)$.
\end{lemma}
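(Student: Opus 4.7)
The plan is to use the trivial inequality $N_{k+1}(x)\le K(x)$, where $K(x)$ denotes the number of \emph{active} cluster representatives at time $x$, that is, the number of indices $r\in[k+1]$ with $\mathcal{P}_r^{k+1}(x)\ne\emptyset$. Each $i$ counted in $N_{k+1}(x)$ contributes one distinct active singleton, so the inequality is immediate, and it therefore suffices to prove $\mathbb{E}^k[K(a-2\delta)]=O(1)$ as $k\to\infty$, which is in particular $o(k)$. Under $\mathbb{P}^k$ every particle is born by time $a-\delta$, so $K(\cdot)$ is non-increasing on $[a-2\delta,a-\delta]$ and satisfies $K\bigl((a-\delta)^-\bigr)\le k+1$.

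I would then invoke the dichotomy of \ref{hyp:tightGP} applied to the subinterval $[a-2\delta,a-\delta]$. In the \emph{first case}, $c\coloneqq\rho([a-2\delta,a-\delta])>0$, so the small-merge Poisson processes alone drive a Kingman-type coalescent on the active cluster representatives in which every pair coalesces at cumulative rate $c$. By the classical bound $\mathbb{E}[K(s)]\le 2/s+O(1)$ for Kingman's coalescent at cumulative time $s$, uniform in the starting number of blocks, one obtains $\mathbb{E}^k[K(a-2\delta)]=O(1/c)=O(1)$. In the \emph{second case}, $\sum_{(t,j):\,t\in[a-2\delta,a-\delta],\,\theta_j(t)>0}\theta_j(t)=\infty$, and the large merges dominate. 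Given $K(t^+)=n$,
\[
\mathbb{E}\bigl[K(t)\bigm|K(t^+)=n\bigr]=n\theta_0(t)+\sum_{j\ge 1}\bigl(1-(1-\theta_j(t))^n\bigr),
\]
whose fractional reduction relative to $n$ tends to $\sum_{j\ge 1}\theta_j(t)=1-\theta_0(t)$ as $n\to\infty$; combined with the divergent cumulative sum and the (locally finite, by \eqref{eq:pas de coalescence avec proba non nulle}) set of large-merge times, this again forces $\mathbb{E}^k[K(a-2\delta)]$ to be bounded independently of $k$.

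The hard part is the second case, since the per-event reduction depends on the current value of $K$ and a direct multiplicative iteration breaks down as soon as $K$ becomes comparable to $1/\theta_j(t)$. The remedy I have in mind is a threshold argument: for a well-chosen $N$ depending only on the $\theta_j$'s (and not on $k$), write $\mathbb{E}^k[K(a-2\delta)]\le N+(k+1)\,\mathbb{P}^k[K(a-2\delta)>N]$ and note that monotonicity of $K$ forces $K(t^+)>N$ throughout $[a-2\delta,a-\delta]$ on the event $\{K(a-2\delta)>N\}$; every large-merge event then acts with its ``high-$n$'' multiplicative efficiency, and the divergent cumulative sum drives $\mathbb{P}^k[K(a-2\delta)>N]$ below $1/k$ for $N$ sufficiently large. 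Together these two cases yield the claimed estimate $\mathbb{E}^k[N_{k+1}(a-2\delta)]=O(1)=o(k)$.
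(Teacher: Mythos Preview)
Your reduction $N_{k+1}(x)\le K(x)$ is fine, and Case~1 is morally correct (though note that the naive pathwise coupling with pure Kingman via shared Poisson processes $\Gamma_{q,r}$ is \emph{not} monotone---a large merge can kill a representative and thereby waste later small-merge events; you need to argue via the Markov property of $K(\cdot)$ that it is \emph{stochastically} dominated by Kingman, which does hold).

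The real problem is Case~2. Your threshold $N$ is supposed to be independent of $k$, but the per-event efficiency $(K(t^+)-\E[K(t)\mid K(t^+)])/K(t^+)\approx\sum_j\theta_j(t)$ only kicks in once $K(t^+)\gg\max_j 1/\theta_j(t)$, and condition~\eqref{eq:pas de coalescence avec proba non nulle} forces the $\theta_j(t)$'s to become arbitrarily small along the (necessarily infinite) set of large-merge times. No fixed $N$ can guarantee ``high-$n$'' efficiency at every event. In fact your target $\E^k[K(a-2\delta)]=O(1)$ is simply false in this regime: take $\rho=0$ and countably many times $s_m\in[a-2\delta,a-\delta]$ with $\theta_1(s_m)=1/m$ and $\theta_j(s_m)=0$ for $j\ge 2$ (so \eqref{eq:pas de coalescence avec proba non nulle} holds and $\sum_{t,j}\theta_j(t)=\sum_m 1/m=\infty$). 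A direct computation of the Markov chain $K$ shows $\E^k[K(a-2\delta)]\asymp\sqrt{k}$; in particular $\P^k[K(a-2\delta)>N]\not<1/k$ for any fixed $N$.

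The paper avoids this by using a threshold proportional to $k$ rather than a fixed $N$: one restricts to the event $\{N_{k+1}(t)>\varepsilon k\}$, lower-bounds the expected decrement at each height by keeping only those $j$ with $\theta_j(t)>1/(\varepsilon k-1)$ (so that $(1-\theta_j)^{N_{k+1}-1}\le 1/e$), and sums additively over $t\in[a-2\delta,a-\delta]$. Using $0\le N_{k+1}\le k+1$ on the left yields
\[
k+1\;\ge\;\P^k\!\bigl(N_{k+1}(a-2\delta)>\varepsilon k\bigr)\Bigl[(1-e^{-1})\varepsilon k\!\!\sum_{\substack{t,j:\,\theta_j(t)>1/(\varepsilon k-1)}}\!\!\theta_j(t)\;+\;(\varepsilon k-1)^2\,\rho([a-2\delta,a-\delta])\Bigr],
\]
and the bracket grows faster than $k$ by \ref{hyp:tightGP} (treating both alternatives at once, no case split). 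This forces $\P^k(N_{k+1}(a-2\delta)>\varepsilon k)\to 0$ for every $\varepsilon>0$, hence $\E^k[N_{k+1}(a-2\delta)]=o(k)$---which is exactly what is stated, and all that is needed.
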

\begin{proof} 
For each small interval $[t,t+dt]$, given $N_{k+1}(t+dt)$ {we look} at the expected number of vertices in $V_1,\dots, V_{k+1}$ that have not yet merged at time $t+dt$ that merge with another vertex between time $t$ and $t+dt$. As in the definition of our coalescent, we distinguish two cases: either there is a large merge at time $t$ or there is a small merge between time $t$ and $t+dt$. The second case is easy to count as the merging occurs by pair at rate given by $\rho$. For the first case we look for every $V_r$ not merged at the value of $\Theta_{t,r}$ and check if there is no other $V_{r'}$ not yet merged that {has} the same value. This, together with the fact that $N_{k+1}(s)\ge N_{k+1}(t)$ for all $s\in [t,t+dt]$ {for all $t\le t+dt$ such that $a-2\delta \le t \le t+dt \le a-\delta$}, gives us the next formula: for every $a-2\delta<t\le t+dt<a-\delta$,
\begin{align*} \E^k [N_{k+1}(t+dt)-N_{k+1}(t)] 
\geq \E^k \left[\sum_{j\geq 1:\theta_j(t)>0}  N_{k+1}(t)\theta_j(t)(1-(1-\theta_j(t))^{N_{k+1}(t)-1}) +2\binom{N_{k+1}(t)}{2}\rho(t,t+dt)\right].
\end{align*}
Using the fact that for all $x\in (0,1),y>0$ with $xy>1$ we have $(1-x)^y\leq 1/e$, this implies that
 \[ \E^k[N_{k+1}(t+dt)-N_{k+1}(t) ]
 \geq \E^k\left[\sum_{j\geq 1: \theta_j(t)>1/(N_{k+1}(t)-1)} \theta_j(t) \left(1-\frac{1}{e} \right){N_{k+1}(t)}  \right]+2\E^k\left[\binom{N_{k+1}(t)}{2}\right]\rho(t,t+dt), \]
Then, by integrating between $a-2\delta$ and $a-\delta$, and using  $0\leq N_{k+1}\leq k+1$ for the left hand side, we get:
\[ k+1 \geq \sum_{\substack{ t\in (a-2\delta,a-\delta), j\geq 1}} \theta_j(t) \left(1- \frac{1}{e}\right)\E^k\left [   {N_{k+1}(t)}\1_{\theta_j(t)>1/(N_{k+1}(t)-1)} \right]+\int_{a-2\delta}^{a-\delta} 2\E^k \left [\binom{N_{k+1}(t)}{2}\right ]\rho(\mathrm{d}t). \]
Next, by fixing $\e>0$ and considering only the case where $N_{k+1}(t)>\e k$, we obtain 
\[ k+1 \geq \sum_{\substack{ t\in (a-2\delta,a-\delta), j\geq 1}} \theta_j(t) \left(1-\frac{1}{e}\right){\e k}\probak{N_{k+1}(t)>\e k}
\1_{\theta_j(t)>1/(\e k-1)}
+\int_{a-2\delta}^{a-\delta}(\e k-1)^2 \probak{N_{k+1}(t)>\e k}\rho(\mathrm{d}t). \]
{Then,} using that for all $t\geq a-2\delta$, we have $N_{k+1}(t)\geq N_{k+1}(a-2\delta)$, {we get}
\[ k+1 \geq \probak{N_{k+1}(a-2\delta )>\e k}\left (  \left(1-\frac{1}{e}\right){\e k} \sum_{\substack{ t\in (a-2\delta,a-\delta), j\geq 1 \\\theta_j(t)>1/(\e k-1)  }} \theta_j(t)
 +(\e k-1)^2\rho([a-2\delta,a-\delta]) \right ).\]
 Using the second part of \ref{hyp:tightGP}, $k$ is negligible compared to  the inside of the huge parenthesis above, so 
 \[ \probak{N_{k+1}(a-2\delta )>\e k}\cv[k] 0.\]
 Since $\e$ is arbitrary and since $N_{k+1}\leq k+1$ this implies the desired result, and so concludes the section.
\end{proof}

\section{GP convergence} \label{sec:proofGP}

\subsection{Coalescent associated with the genealogies of random vertices in $\T^n$}\label{sous-section coalescent discret}
In this section, we introduce a discrete growth-coalescent process, which describes how the genealogies of random vertices in $\T^n$ merge. In the next section, we will prove a scaling limit for this coalescent toward the similar continuous coalescent introduced in \cref{sec: coalescent continu}. At the end of this section, we  explain why this implies the scaling limit of the distance matrix of random vertices in $\T^n$, and so  Theorem \ref{th:GP}.

Beforehand, let us formally split high degrees and small degrees.
\begin{lemma}\label{lemme epsilon n} There exists a sequence $(\e_n)_{n\ge0}$ decreasing to $0$ slowly enough so that for all $n\ge 0$, there exists a bijection $I_n : \{ t,\ \theta_1(t)>\e_n \text{ and } \e_n <t <1-\e_n \} \to 
\{ i, \ d^n_{i,1}/D^n_i>\e_n \text{ and }
\e_n < i/n < 1-\e_n\}$ 
 such that for all $\e>0$,
\begin{equation}\label{eq:cv atomes L2}
\sum_{j \ge 1, t\in (\e, 1-\e)}
\left(
\frac{d^n_{I_n(t), j}}{D^n_{I_n(t)}} - \theta_j(t)
\right)^2
\1_{\theta_j(t)>\e_n }
\cv[n]
0
\end{equation}
and such that for all $t\ge0$ such that $\theta_1(t)>0$,
\begin{equation}\label{eq: cv I sur n vers t}
I_n(t)/n \cv[n] t.
\end{equation}
\end{lemma}
\begin{proof}
We construct the sequence $\e_n$ as follows. For all $k \ge 1$, let $\eta_k \in (2^{-k-1},2^{-k})$ such that $\eta_k \not \in \{\theta_j(t); \ t \in (0,1), j\ge 1\}$. By \ref{hyp:atomes} and \ref{hyp:splitatomes}, there exists $n_k\ge 1$ such that for all $n\ge n_k$, there exists a bijection $I_{k,n}: \{t \in (\eta_k,1-\eta_k), \ \theta_1(t)> \eta_k\} \to \{i, \ d^n_{i,1}/D^n_i > \eta_k \text{ and } \eta_k < i/n < 1-\eta_k \}$ such that for all $t \in (\eta_k, 1-\eta_k)$, for all $j\ge 1$ such that $\theta_j(t)>\eta_k$,
$$
\left\vert \frac{d^n_{I_{k,n}(t),j}}{D^n_{I_{k,n}(t)}} - \theta_j(t) \right\vert \le \eta_k \qquad \text{and} \qquad 
\left\vert \frac{I_{k,n}(t)}{n} -t \right\vert \le \eta_k.
$$
Then it suffices to take $\e_n \coloneqq \min\{\eta_k; \ k\ge 1 \text{ and } n_k \le n\}$ and define $I_n$ as the $I_{k,n}$ where $k$ is the largest integer such that $n_k \le n$. Then \eqref{eq: cv I sur n vers t} holds clearly. For \eqref{eq:cv atomes L2}, note that for all $\e>0$,
$$
\sum_{j \ge 1, t\in (\e, 1-\e)}
\left(
\frac{d^n_{I_n(t), j}}{D^n_{I_n(t)}} - \theta_j(t)
\right)^2
\1_{\theta_j(t)>\e_n }
\le \sum_{j \ge 1, t\in (\e, 1-\e)} \e_n^2 \1_{\theta_j(t)>\e_n }
\le \sum_{j \ge 1, t\in (\e, 1-\e)} \e_n^2\wedge \theta_j(t)^2
\cv[n] 0,
$$
where the convergence holds by dominated convergence thanks to \eqref{eq:pas de coalescence avec proba non nulle}.
\end{proof}

Then, let us prove a direct consequence of \ref{hyp:splitatomes} which will be very convenient:
\begin{lemma}\label{lemme profil tend vers l'infini}
    We have for all $\e \in (0,1)$,
$$
\min_{ \e n \le i \le (1-\e)n} D^n_i \cv[n] \infty.
$$
\end{lemma}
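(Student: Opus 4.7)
The plan is to argue by contradiction, using only \ref{hyp:splitatomes} together with the consequence $\liminf_n \h^n/n \ge 1$ of \ref{hyp:naissance}. Suppose the claim fails: then there exist $\e \in (0,1)$, a constant $M$, a subsequence (still indexed by $n$), and indices $i_n \in [\e n, (1-\e) n]$ such that $D^n_{i_n} \le M$ for all $n$. I want to show that this forces a large atom at the two consecutive heights $i_n$ and $i_n+1$, which is forbidden by \ref{hyp:splitatomes}.

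First, I would observe that $\liminf \h^n/n \ge 1$ implies that for $n$ large, $\h^n > (1-\e) n + 1$, so that both $D^n_{i_n}$ and $D^n_{i_n+1}$ are at least $1$ (the tree has not yet died out at these heights). In particular, because the degrees at height $i_n$ are listed in non-increasing order, $d^n_{i_n,1} \ge 1$, whence
\[
\frac{d^n_{i_n,1}}{D^n_{i_n}} \ge \frac{1}{D^n_{i_n}} \ge \frac{1}{M}.
\]

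The crucial observation is a pigeonhole at height $i_n+1$: this height hosts exactly $D^n_{i_n} \le M$ vertices, whose degrees sum to $D^n_{i_n+1}$. Therefore
\[
d^n_{i_n+1,1} \ge \frac{D^n_{i_n+1}}{D^n_{i_n}} \ge \frac{D^n_{i_n+1}}{M}, \qquad \text{so that} \qquad \frac{d^n_{i_n+1,1}}{D^n_{i_n+1}} \ge \frac{1}{M}.
\]
Now set $\e' \coloneqq \min(\e/2, 1/M) > 0$. For $n$ large, both $i_n$ and $i_n+1$ belong to $[\e' n, (1-\e')n]$, and each satisfies $d^n_{i,1} > \e' D^n_i$. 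Applying \ref{hyp:splitatomes} with parameter $\e'$ produces some $\delta(\e')>0$ with $\delta(\e')n < |i_n - (i_n+1)| = 1$, which is absurd as soon as $n > 1/\delta(\e')$. This contradiction shows that no such $M$ can exist.

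The only real obstacle is noticing the pigeonhole at the adjacent height $i_n+1$; once that is in place, the conclusion is a one-line invocation of \ref{hyp:splitatomes}. A little routine care is required at the boundary to guarantee that $i_n+1$ still lies in the window where \ref{hyp:splitatomes} is informative and that $D^n_{i_n+1} \ge 1$, both of which are furnished by $\liminf \h^n/n \ge 1$.
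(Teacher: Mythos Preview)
Your proof is correct and essentially identical to the paper's: both hinge on the pigeonhole bound $d^n_{i+1,1} \ge D^n_{i+1}/D^n_i$ (equivalently $D^n_i\, d^n_{i+1,1} \ge D^n_{i+1}$) combined with \ref{hyp:splitatomes} applied at the two adjacent heights $i$ and $i+1$; the paper argues directly by splitting into the cases $d^n_{i,1} \le \e D^n_i$ and $d^n_{i,1} > \e D^n_i$, while you recast the same two-height argument as a contradiction. The only cosmetic point is that with $\e' = \min(\e/2, 1/M)$ the strict inequality $d^n_{i,1} > \e' D^n_i$ required by \ref{hyp:splitatomes} may fail when $D^n_{i_n}=M$ exactly; taking instead $\e' = \min(\e/2, 1/(M+1))$ repairs this immediately.
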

\begin{proof}
Let $\e\in (0,1)$. Let us prove that for all $n$ large enough for all $\e n < i < (1-\e) n$, we have $D^n_i \ge 1/ \e$. For all $i \le \h^n -1$, if $d^n_{i,1} \le \e D^n_i$, then, since $d^n_{i,1}\ge 1$, we deduce that $D^n_i \ge 1/\e$.

Besides, note that for all $n$ large enough, if $d^n_{i,1} > \e D^n_i$ for some $\e n < i <(1-\e) n$, then by \ref{hyp:splitatomes} we have $d^n_{i+1, 1} \le \e D^n_{i+1}$. But we also know that $D^n_i d^n_{i+1,1} \ge D^n_{i+1}$
since there are exactly $D^n_i$ vertices of $\T^n$ at height $i+1$, so that 
$$
D_{i+1}^n \le d^n_{i+1,1} D^n_i \le  \e  D^n_{i+1} D^n_i.
$$
Finally, we also get $D^n_i \ge 1/ \e$ by diving both sides by $D_{i+1}^n$ (that can be done since since by \ref{hyp:naissance}, $\liminf \h^n/n \ge 1$ and so $D_{i+1}^n\ge 1$ for all $n$ large enough and for all $i< (1-\e) n $).
\end{proof}

Now, {in the rest of this subsection, let us fix $k \in \N$}. To prove the convergence of the rescaled matrix of distances between $k$ vertices $V^n_1,\ldots, V^n_k$ taken uniformly at random, we introduce the associated growth-coalescent process and prove that it satisfies a scaling limit towards the coalescent process defined in Section \ref{sec: coalescent continu}. 

In order to get some additional independence, we add some vertices at each height to the ancestors of $V^n_1,\ldots, V^n_k$ in order to have a constant number of vertices at each height {(this will facilitate the computation of the coalescence probabilities)}. For all $i> \max_{r \in [k]} \h(V^n_r)$:
\begin{itemize}
\item When $D^n_{i-1} \ge k$, let $(V^n_{i,j})_{1\leq j \leq k}$ be distinct vertices in $\{ (i,j);  \ j \le D^n_i \}$ taken uniformly at random.
\item When $D^n_{i-1} < k$, we set $V^n_{i,r} = (i, 1)$ for all $r \in [k]$. 
\end{itemize} Then, we define $V^n_{i,1}, \ldots, V^n_{i,k}$ for $i \le \max_{r \in [k]} \h(V^n_r)$ by downward induction on $i$. Given $(V^n_{i+1,j})_{1\leq j \leq k}$ define $(V^n_{i,j})_{1\leq j \leq k}$ as follows:
\begin{itemize}
    \item[] \textbf{Position of $V^n_{1}$,\dots, $V^n_k$:} For all $r \in [k]$, if $\h(V^n_r) = i$, then we set $V^n_{i,r}= V^n_r$.
    \item[] \textbf{Position of the ancestors: } For all $p \ge 1$, for all $1 \le r_1 < \ldots <r_p \le k$, if the vertices $V^n_{i+1, r_1}, \ldots V^n_{i+1,r_p}$ are the vertices in $\{V^n_{i+1,1}, \ldots, V^n_{i+1,k} \}$ which are attached to a vertex $(i,j)$ in $\T^n$, and if one of them is an ancestor of $V^n_r$ for some $r \in [k]$, then:
    \begin{itemize}
        \item If $(i,j) = V^n_{r'}$ for some $r' \in [k]$, we set as explained before $V^n_{i,r'} = V^n_{r'}$.
        \item Otherwise, if $(i,j) \not \in \{V_1^n, \ldots, V^n_k\}$, let $r$ be the smallest integer in $[k]$ such that $r \in \{r_1, \ldots, r_p \}$ and such that $V^n_{i+1,r}$ is the ancestor of $V^n_{r'}$ for some $r' \in [k]$. 
        We then set $V^n_{i,r} = (i,j)$.
    \end{itemize}
    \item[] \textbf{List completion:} When $D^n_{i-1} \ge k$, we complete the list by taking distinct vertices uniformly at random among the vertices $\{(i,j)\}_{1\leq j\leq D^n_{i-1}}$ not yet taken. By convention, when $D^n_{i-1} <k$, all the $V^n_{i,r}$ for $r \in [k]$ not yet defined are equal to $(i,1)$. By Lemma \ref{lemme profil tend vers l'infini}, this latter case is excluded on any interval $[\e n , (1-\e)n ]$ for every $\e>0$ for every $n$ large enough.
\end{itemize}

We look at the fathers of $V^n_{i+1,1}, \ldots ,V^n_{i+1,k}$  in $\T^n$. If $V^n_{i+1,q}$ and $V^n_{i+1,r}$ for some $q<r \in [k]$ have the same father, then we say that they coalesce at this vertex. {Let $(\e_n)_{n\ge 0}$ be as in Lemma \ref{lemme epsilon n}.} For all $q<r \in [k]$ and $i \ge 0$, let
\begin{equation}\label{eq indicatrice petite coalescence}
X^n_{q,r,i} = \1_{V^n_{i+1,q}, V^n_{i+1,r} \text{ coalesce in a vertex of degree at most } \e_n D^n_i}.
\end{equation}
For all $n,i\ge 0$, and $r \in [k]$, let $\Theta_{i,r}^n$ be the integer $j$ such that the vertex $V^n_{i+1,r}$ is attached to the vertex of degree $d^n_{i,j}$ when $d^n_{i,j}> \e_n D^n_i$ and $\Theta_{i,r}^n=0$ otherwise. We set for all $i,j \ge 1$,
$$
S^{n,k}_{i,j} = \{ r \in [k], \ \Theta^n_{i,r}=j\}.
$$

We are now in position to define the growth-coalescent process. Let $((\mathcal{P}^{n,k}_r(i))_{r \in [k]})_{ i\ge 0}$ be the process defined as follows. We set for all $i >\max_{r \in [k]} \h(V^n_r)$, and for all $r \in [k]$, the initial values $\mathcal{P}^{n,k}_r(i) = \emptyset$. Then, for all $i\ge 0$,
\begin{itemize}
\item[] \textbf{Birth:} If $i=\h(V^n_r)$ for some $r \in [k]$, then $\Part_r^{n,k}(i)=\{r\}$;
\item[] \textbf{Small merge:} If $X^n_{q,r,i}=1$ for some $1 \le q<r\le k$ then whenever $\Part_q^{n,k}(i+1)\neq \emptyset$ and $\Part_{r}^{n,k}(i+1)\neq \emptyset$ we have $\Part_q^{n,k}(i)=\Part_q^{n,k}(i+1)\cup \Part_{r}^{n,k}(i+1)$ and $\Part_r^{n,k}(i)=\emptyset$;
\item[] \textbf{Large merge:} If $\# S^{n,k}_{i,j} \ge 2$ for some $j\ge 1$, we write $m^{n,k}(i,j)$ for the smallest integer $r\in S^{n,k}_{i,j}$ such that $\Part_r^{n,k}(i+1)\neq \emptyset$. We set:
\[ \forall r \in S^{n,k}_{i,j} \setminus \{ m^{n,k}(i,j)\}, \quad \Part_r^{n,k}(i)=\emptyset, \quad \text{and} \quad  \Part_{m^{n,k}(i,j)}^{n,k}(i)=\bigcup_{r\in S^{n,k}_{i,j}} \Part_{r}^{n,k}(i+1);\]
\end{itemize}
and for the other $r \in [k]$ we set $\Part_r^{n,k}(i)= \Part_r^{n,k} (i+1)$.

This discrete growth-coalescent is well-defined and gives the genealogy of $V^n_1, \ldots, V^n_r$ in $\T^n$. 
\begin{proposition}
    With high probability, the discrete growth coalescent process $((\mathcal{P}^{n,k}_q(i))_{q \in [k]})_{i\ge 0}$ is well-defined. Moreover, for all $0 \le i \le \h^n$, {for all $q \in [k]$,} the set $\mathcal{P}^{n,k}_q(i)$ is the set of $r \in [k]$ such that the vertex $V^n_{i,q}$ is an ancestor of $V^n_r$.
\end{proposition}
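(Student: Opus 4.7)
The plan is to identify a finite list of ``bad'' sample events, show each has probability tending to $0$, and then verify both well-definedness and the ancestry identification by downward induction on the complement.

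The first obstruction is purely algorithmic: the small-merge rule is stated pairwise, so if three or more of $V^n_{i+1,1},\ldots,V^n_{i+1,k}$ share a common father of degree at most $\e_n D^n_i$, the pairwise instructions conflict. The large-merge rule is written as a set operation, and since each vertex has a single degree the two merge rules act on disjoint groups of particles at each height. Hence well-definedness reduces to excluding
$$
\mathcal A_n \coloneqq \bigcup_{i\ge 0}\bigcup_{q<r<s\in[k]}\bigl\{V^n_{i+1,q},V^n_{i+1,r},V^n_{i+1,s}\text{ share a common father of degree }\le \e_n D^n_i\bigr\}.
$$
For a fixed triple, the probability that they share a given father $(i,j)$ equals $\binom{d^n_{i,j}}{3}/\binom{D^n_i}{3}$, which is at most a universal constant times $(d^n_{i,j}/D^n_i)\binom{d^n_{i,j}}{2}/\binom{D^n_i}{2}$. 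Restricting to $d^n_{i,j}\le \e_n D^n_i$ and summing over $(i,j)$ gives an $O(\e_n)$ total by \ref{hyp:coalescence}, so $\proba{\mathcal A_n}=O(k^3\e_n)=o(1)$.

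The second obstruction is a correctness issue at a birth height: if $i=\h(V^n_r)$ and some $V^n_{i+1,q}$ carrying a non-empty partition is a child of $V^n_r$ in $\T^n$, then in the construction the ancestry of $V^n_{i+1,q}$ must be absorbed into slot $r$ through the assignment $V^n_{i,r}=V^n_r$, whereas the algorithm either keeps it on slot $q$ (small-degree $V^n_r$) or routes it via the large merge to a slot in $S^{n,k}_{i,j}$ that need not be $r$. I would therefore control
$$
\mathcal B_n\coloneqq \bigcup_{r\in[k]}\bigl\{\exists q\in[k],\ V^n_{\h(V^n_r)+1,q}\text{ is a child of } V^n_r\bigr\}.
$$
Since $V^n_r$ is uniform on $\T^n$, the probability that a fixed slot-$q$ vertex is a child of $V^n_r$ equals $\E[d^n_{\h(V^n_r),\mathrm{idx}(V^n_r)}/D^n_{\h(V^n_r)}]=(\h^n+1)/|\T^n|$; using $|\T^n|\ge \h^n\cdot \min_i D^n_i$ and \cref{lemme profil tend vers l'infini}, this tends to $0$, and summing over the $k^2$ pairs $(r,q)$ gives $\proba{\mathcal B_n}=o(1)$. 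Boundary heights contribute negligibly since by \ref{hyp:naissance} the $\h(V^n_r)$ lie in the bulk with high probability.

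On $\mathcal A_n^c\cap\mathcal B_n^c$ the moreover clause follows by downward induction on $i$: the case $i>\max_r\h(V^n_r)$ is vacuous, and for the inductive step I partition the particles at height $i+1$ by their fathers at height $i$ and match each rule of the algorithm to the corresponding case in the construction of $V^n_{i,q}$. An isolated father is handled by the default $\Part^{n,k}_r(i)=\Part^{n,k}_r(i+1)$ since $V^n_{i,r}$ is that father; a non-distinguished large-degree shared father activates the large-merge rule whose ``smallest index with non-empty partition wins'' matches the construction's ``smallest ancestor index''; a non-distinguished small-degree shared father, which on $\mathcal A_n^c$ has at most two attached particles, activates the small merge analogously; and on $\mathcal B_n^c$ the birth rule $\Part^{n,k}_r(i)=\{r\}$ matches $V^n_{i,r}=V^n_r$ since no incoming ancestry needs to be absorbed. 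I expect the main obstacle to be the bound on $\mathcal B_n$: unlike $\mathcal A_n$, which reduces cleanly to \ref{hyp:coalescence}, $\mathcal B_n$ requires the quantitative bulk-width estimate $\min_i D^n_i\to\infty$ from \cref{lemme profil tend vers l'infini} together with a separate argument that the boundary heights contribute negligibly.
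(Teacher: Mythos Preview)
Your approach is essentially correct and considerably more detailed than the paper's. The paper gives a two-line sketch: it declares the result ``straightforward'' once one checks that, with high probability, for all $\e n\le i\le (1-\e)n$ at most two of the $V^n_{i+1,q}$'s share a small-degree father, and it forward-references Lemma~\ref{lemme distance en variation totale aux Bernoulli} (the total-variation-to-Bernoulli estimate) for this; then it invokes \ref{hyp:naissance} to confine the births $\h(V^n_r)$ to the bulk. Your direct first-moment bound on $\mathcal A_n$ is the same content as that forward-referenced lemma, done by hand.

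You go further by isolating the birth-conflict event $\mathcal B_n$, which the paper does not mention. This is a genuine refinement in rigour: the birth rule $\Part^{n,k}_r(i)=\{r\}$ would indeed fail to track ancestry if some $V^n_{i+1,q}$ carrying a non-empty block were a child of $V^n_r$. However, your bound on $\P(\mathcal B_n)$ has a slip. The inequality $|\T^n|\ge \h^n\cdot\min_i D^n_i$ is useless because $\min_{0\le i\le \h^n} D^n_i$ need not diverge: for instance $D^n_0=d^n_{0,1}$ may equal~$1$, and heights above $n$ may also have $D^n_i=1$. The correct route to $\h^n/|\T^n|\to 0$ is in two steps. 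First, weak convergence of the profile to a probability measure supported on $[0,1]$ forces $(\h^n-2n)_+/|\T^n|\to 0$, since each height $i<\h^n$ has $D^n_i\ge1$ and the profile mass above level $2$ vanishes. Second, $n/|\T^n|\to 0$ because $|\T^n|\ge(1-2\e)n\cdot\min_{\e n\le i\le(1-\e)n}D^n_i$, and it is only this \emph{bulk} minimum that \cref{lemme profil tend vers l'infini} controls. With this fix, and after noting that the slot vertices at each height are (conditionally on the tree above) uniform so that your expectation formula is justified, your argument stands.
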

\begin{proof}
    The {first part of the statement} is straightforward after checking that for all $\e\in (0,1)$, with high probability, for all $\e n \le i \le (1-\e ) n$, at most two vertices coalesce in vertices of degree at most $\e_n D^n_i$. This is a direct consequence of Lemma \ref{lemme distance en variation totale aux Bernoulli} which is stated and proven in the next subsection {and of the definition of $X^n_{i,k}$ just above the said lemma}. One can then use that by \ref{hyp:naissance},
    $$
    \lim_{\e \to 0} \liminf_{n\to \infty} \P\left( \forall r \in [k], \ \e n \le \h(V^n_r) \le (1-\e) n \right) =1.
    $$
    {Next, let us turn to the second point of the statement. Note that by definition of the vertices $V^n_{i,r}$'s, when there is a small merge at height $i\ge 0$, the father of the two vertices $V^n_{i+1,q}$ and $V^n_{i+1,r}$ for $q<r$ which coalesce is $V^n_{i,q}$. Similarly, when there is a large merge for some $j\ge 1$, the father of the vertices $V^n_{i+1, r}$'s for $r \in S^{n,k}_{i,j}$ is the vertex $V^n_{i, m^{n,k}(i,j)}$. This entails the second point of the proposition by taking the definition of $((\Part^{n,k}_r(i))_{r \in [k]})_{i\ge 0}$ into account and then applying a downward induction.
    }
\end{proof}

We then state the scaling limit result towards the continuous time growth-coalescent process defined in Section \ref{sec: coalescent continu}. Its proof is postponed to the next subsections.

\begin{proposition}\label{prop: cv coalescent Skorokhod}
    We have the convergence
    $$
    \left(\left(\Part^{n,k}_r(\lceil n(1-t) \rceil\right)_{r \in [k]}\right)_{t\in [0,1]}
    \cvloi[n]
    \left(\left( \Part^k_r(1-t)\right)_{r \in [k]}\right)_{t \in [0,1]}
    $$
    for the $J_1$-Skorokhod topology.
\end{proposition}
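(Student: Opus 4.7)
The plan is to encode the coalescent as a marked point process on $[0,1]$ whose marks record births, small merges, and large merges, and then to show weak convergence of this point process. Since the limiting process has a.s.\@ finitely many events on each compact sub-interval of $(0,1)$ and no simultaneous events, $J_1$-convergence of the coalescent itself will follow.

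First, each $V^n_r$ is uniform on the vertices of $\T^n$, so by \ref{hyp:naissance} the birth heights $(\h(V^n_r)/n)_{r\in[k]}$ converge jointly in distribution to $(H_r)_{r\in[k]}$, i.i.d.\@ with law $\nu$. The key structural fact I would use throughout is that, conditionally on the past and on the active set $A^n(i+1) = \{r\in[k]:\Part^{n,k}_r(i+1)\ne\emptyset\}$, the tracked vertices $(V^n_{i+1,r})_{r\in A^n(i+1)}$ are approximately a uniform sample without replacement from the $D^n_i$ vertices at height $i+1$, so that their fathers at height $i$ follow an urn sample with multiplicities $d^n_{i,j}$. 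This follows from the exchangeability of $\T^n$ combined with the list-completion step.

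For the large merges, I would fix $t$ with $\theta_1(t)>0$; by \ref{hyp:atomes} and \ref{hyp:splitatomes} the bijection $I_n$ sends $t$ to a level $I_n(t)$ with $I_n(t)/n\to t$, and by \eqref{eq:cv atomes L2} the normalised degrees there converge in $\ell^2$ to $(\theta_j(t))_{j\ge 1}$. Hence the marks $(\Theta^n_{I_n(t),r})_{r\in A^n(I_n(t)+1)}$ converge in distribution to i.i.d.\@ copies of $\Theta_{t,r}$, passing from urn to i.i.d.\@ sampling using Lemma~\ref{lemme profil tend vers l'infini}. Outside the set of $\Theta$-atoms, the probability of any large merge in a window $[a,b]$ is $O\bigl(\binom{k}{2}\sum_{t\in[a,b],\, j\ge 1}\theta_j(t)^2\bigr)$, which is negligible by dominated convergence using \eqref{eq:pas de coalescence avec proba non nulle}. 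For the small merges, the conditional probability that a fixed active pair $(q,r)$ undergoes a small merge at level $i$ is $\sum_{j:d^n_{i,j}\le\e_n D^n_i}\binom{d^n_{i,j}}{2}/\binom{D^n_i}{2}$, bounded by $\e_n=o(1)$ per level since each summand is $\le (d^n_{i,j}/D^n_i)^2\le \e_n\,(d^n_{i,j}/D^n_i)$; summing over $i/n\in[a,b]$ and invoking \ref{hyp:coalescence} minus the explicit large-atom contribution identified above yields total intensity $\rho([a,b])$. A Le Cam Poisson bound then produces convergence of each pair's small-merge point process to a Poisson process of intensity $\rho$, jointly in pairs since pairwise correlations are of order $O(\e_n^2/D^n_i)$.

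Assembling the three limits over a fixed grid of heights gives finite-dimensional convergence of the coalescent. Letting the mesh shrink and using the almost-sure distinctness of the limit jump times (births from non-atomic $\nu$, small merges from a diffuse Poisson process, large merges on the countable set of $\Theta$-atoms which avoids the other jump times a.s.) upgrades this to $J_1$ convergence on $[0,1-\alpha]$ for each $\alpha\in(0,1)$, and then $\alpha\to 0$ finishes the argument. The main obstacle I expect is the joint control when the active set $A^n(i)$ is itself a functional of the past; I would handle it by bounding, level by level, the total variation distance between the true conditional transition kernel and an idealised one in which the fathers of the active vertices are drawn i.i.d.\@ with probabilities $d^n_{i,j}/D^n_i$. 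The urn correction is uniformly negligible because $D^n_i\to\infty$ on $[\e n,(1-\e)n]$ by Lemma~\ref{lemme profil tend vers l'infini}.
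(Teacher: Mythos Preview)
Your decomposition into births, small merges, and large merges, followed by point-process convergence and then $J_1$ convergence, is exactly the paper's strategy, and your treatment of each piece matches the paper's lemmas (your Le Cam bound is Lemma~\ref{lemme distance en variation totale aux Bernoulli} together with Corollary~\ref{cor: cv petites coalescences}; your urn-to-i.i.d.\ argument for the $\Theta$-marks is Lemma~\ref{lem: cv grandes coalescences}).

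The one place you over-complicate is the ``main obstacle'' you anticipate, namely the dependence of the transition kernel on the active set $A^n(i)$. This is precisely what the list-completion step in the construction of the $V^n_{i,r}$'s is designed to eliminate. Because the list is always completed to $k$ distinct uniform vertices at each height (whenever $D^n_{i-1}\ge k$), the family $\bigl((X^n_{q,r,i})_{q<r\in[k]},(\Theta^n_{i,r})_{r\in[k]}\bigr)_{i\ge 0}$ of merge indicators is a function of the tree and of the uniform completions only; its law does not depend on which particles are currently active, nor on the birth heights $(\h(V^n_r))_{r\in[k]}$. The paper exploits this directly: the three marginal convergences \eqref{eq cv hauteurs unif}, \eqref{eq cv petites coalescences}, \eqref{eq cv grandes coalescences} are automatically joint (the last sentence of the proof of Proposition~\ref{prop: cv coalescent Skorokhod}), and no level-by-level total-variation coupling between the true conditional kernel and an idealised i.i.d.\ kernel is needed. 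Your proposed workaround would go through, but it is more laborious than what the construction already buys you.
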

The convergence in the sense of Gromov-Prokhorov readily follows from the above proposition:
\begin{proof}[Proof of Theorem \ref{th:GP} using Proposition \ref{prop: cv coalescent Skorokhod}]
    {Let $V^n_1, \ldots, V^n_k$ be i.i.d.\@ uniform random vertices of $\T^n$ (conditionally on $\T^n$) and let $V_1, \ldots, V_k$ be the leaves of the tree $\T_k\subset \T(\nu, \rho,\Theta)$ defined in p.6. Note that the points $V_1,\ldots, V_k$ can also be seen as i.i.d.\@ random points in the metric measured space $\T(\nu, \rho,\Theta)$ since the measured real tree $\T(\nu, \rho,\Theta)$ is defined as the (increasing) limit of the $\T_k$'s as $k\to \infty$ in Subsection \ref{sec:continuum_leaf-tight}.} Thanks to Lemma \ref{equivGP}, it suffices to prove the convergence for all $k\ge 1$
    \begin{equation}\label{eq cv matrice des distances}
        \left( d^n(V^n_q,V^n_r)/n \right)_{1 \le q < r \le k} 
        \cvloi[n]
        \left( d(V_q,V_r) \right)_{1 \le q < r \le k} ,
    \end{equation}
    where $d^n$ is the graph distance on $\T^n$. Let $k\ge 1$. Let $q<r \in [k]$. By construction of $\T^n$ and by definition of the processes $\Part^{n,k}_m$ for $m \in [k]$, we have
    $$
    {d}^n(V^n_q,V^n_r) 
    = \h(V^n_q) +\h(V^n_r)-2 \max \{i \ge 0, \ \exists m \in [k], \ q,r \in \Part^{n,k}_m(i) \}.
    $$
    Therefore, by \eqref{eq lien distance coalescent continu}, in order to show \eqref{eq cv matrice des distances}, it is enough to check the convergence
    $$
        \frac{1}{n}\max \{i \ge 0, \ \exists m \in [k], \ q,r \in \Part^{n,k}_m(i) \}
        \cvloi[n]
        \max \{ t \ge 0, \ \exists m \in [k], \ q, r \in \Part_m^k(t) \},
    $$
    jointly with 
    \begin{equation}\label{eq cv hauteurs unif}
    ((\h(V^n_r)/n)_{r \in [k]} \cvloi[n] (H_r)_{r \in [k]}.
    \end{equation} 
    But note that $\h(V^n_r) = \max \{ i\ge 0, \ r \in \Part_r^{n,k}(i) \}$ and $H_r = \max\{t \ge 0, \ r \in \Part^k_r(t)\}$. Thus, the two convergences stem from Proposition \ref{prop: cv coalescent Skorokhod}.
\end{proof}

The rest of this section is dedicated to the proof of Proposition \ref{prop: cv coalescent Skorokhod}.

\subsection{Convergence of the coalescences between random vertices at the same height}

In this subsection, we prove Proposition \ref{prop: cv coalescent Skorokhod}. We first prove some useful lemmas.

A consequence of \ref{hyp:coalescence}, (\ref{eq:cv atomes L2}) and \eqref{eq: cv I sur n vers t} is the {vague} convergence of measures on $(0,1)$
\begin{equation}\label{eq:cv rho epsilon n}
n\sum_{j\ge 1} \frac{\binom{d^n_{nt,j}}{2}}{\binom{D^n_{nt}}{2}} \1_{d^n_{nt,j}\le \e_n D^n_{nt}} \mathrm{d} t
\cv[n]
\rho.
\end{equation}
We set for all $n,i\ge 0$,
$$
p_{n,i}\coloneqq \sum_{j\ge 1} \frac{\binom{d^n_{i,j}}{2}}{\binom{D^n_{i}}{2}} \1_{d^n_{i,j}\le \e_n D^n_{i}}.
$$
Then Equation (\ref{eq:cv rho epsilon n}) entails that for all $\e>0$, we have \begin{equation}\label{eq p n i petit}
\max_{\e n \le i \le (1-\e)n} p_{n,i} \cv[n] 0
\end{equation}
since $\rho$ has no atoms. Another consequence of (\ref{eq:cv rho epsilon n}) is the following lemma:
\begin{lemma}\label{lemme Bernoulli Poisson}
    For all $k\ge 2$, for any integer $n$ large enough, for $i\ge 0$, let $B^n_{i}$ be independent Bernoulli random variables of parameter $p_{n,i}$. Then we have the {vague} convergence of measures on $(0,1)$ in distribution
    $$
    \sum_{1\le i <n} B^n_{i} \delta_{i/n} 
    \cvloi[n]
    \Pi_{\rho},
    $$
    where $\Pi_\rho$ is a Poisson random measure of intensity $\rho$. 
\end{lemma}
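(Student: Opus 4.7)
My plan is to prove the convergence via the Laplace functional characterization of weak convergence of random measures on $(0,1)$ (in the vague sense, since $\rho$ is merely locally finite). Concretely, it suffices to show that for every continuous non-negative test function $f$ with compact support in $(0,1)$,
\[
\E\left[\exp\left(-\sum_{1\le i<n} B^n_i f(i/n)\right)\right] \cv[n] \exp\left(-\int (1-e^{-f(t)})\,\rho(\mathrm{d} t)\right),
\]
since the right-hand side is the Laplace functional of $\Pi_\rho$ evaluated at $f$.

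For each fixed $f\in C_c^+((0,1))$ supported in $[\e,1-\e]$, the left-hand side factorises by independence of the $B^n_i$, giving
\[
\prod_{i=\lceil \e n\rceil}^{\lfloor (1-\e)n\rfloor} \bigl(1-p_{n,i}(1-e^{-f(i/n)})\bigr).
\]
Taking logarithms and Taylor expanding, this equals
\[
-\sum_{i} p_{n,i}(1-e^{-f(i/n)}) + R_n,
\]
where $|R_n|\le C_f \sum_i p_{n,i}^2$ for some constant $C_f$ depending only on $f$.

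The first-order term converges to $\int (1-e^{-f(t)})\,\rho(\mathrm{d}t)$: indeed, the function $1-e^{-f}$ is continuous with compact support in $(0,1)$, and the measure $\sum_i p_{n,i}\delta_{i/n}$ converges vaguely to $\rho$ on $(0,1)$ by \eqref{eq:cv rho epsilon n} (after writing the density $t\mapsto n p_{n,\lfloor nt\rfloor}$ as a piecewise constant function whose integral against a fixed continuous compactly supported test function coincides, up to vanishing error, with the Riemann sum $\sum_i p_{n,i}f(i/n)$). The error term $R_n$ is controlled by
\[
|R_n| \le C_f \Bigl(\max_{\e n\le i\le (1-\e)n} p_{n,i}\Bigr) \sum_{i} p_{n,i},
\]
which tends to zero since the sum stays bounded (it converges to $\rho([\e,1-\e])<\infty$) while the maximum vanishes by \eqref{eq p n i petit}.

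The main step to be careful about is simply that $\rho$ is only locally finite on $(0,1)$, so one must work with compactly supported test functions throughout and not try to take $f\equiv 1$; I expect no serious difficulty here, as the Laplace-functional criterion for vague convergence of random measures (see e.g.\ Kallenberg's monograph on random measures) is exactly tailored to this setting. Once the Laplace functional converges on $C_c^+((0,1))$ to that of $\Pi_\rho$, the claimed weak convergence follows.
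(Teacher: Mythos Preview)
Your proof is correct and follows essentially the same approach as the paper: both compute a transform of the point process (the paper uses characteristic functions of the counts $\sum_{ns\le j\le nt} B^n_j$ over intervals, you use the Laplace functional against $C_c^+$ test functions), factorise by independence, and control the logarithm using \eqref{eq:cv rho epsilon n} for the first-order term and \eqref{eq p n i petit} for the remainder. The computational core is identical; only the choice of characterizing transform differs.
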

\begin{proof}
    It is a consequence of the Poisson paradigm. By the independence of the $B^n_{i}$'s and the fact that $\rho$ has no atoms, it suffices to show that for all $0<s<t<1$, for all $x\in \R$, we have
    $$
    \E \left[
    e^{\mathrm{i}x \sum_{ns \le j \le nt} B^n_{j}}
    \right]
    \cv[n]
    \exp(\rho([s,t])(e^{\mathrm{i} x }-1))
    ,
    $$
    which is straightforward since
    $$
    \E \left[
    e^{\mathrm{i}x \sum_{ns \le j \le nt} B^n_{j}}
    \right]
    = \prod_{ns \le j \le nt} \left(
    p_{n,j}e^{\mathrm{i} x}
    +1-p_{n,j}
    \right)
    $$
    and then using \eqref{eq:cv rho epsilon n} and \eqref{eq p n i petit}.
\end{proof}

Let us focus on the times of small merges. For all $n,i\ge 0$, let $X^n_{i,k}$ be the number of pairs $\{r,s\}$ for $r<s \in[k]$ such that $V^n_{i+1,r}$ and $V^n_{i+1,s}$ coalesce in a vertex of degree at most $\e_n D^n_i$, i.e.
$$
X^n_{i,k} = \sum_{1\le r < s\le k} \1_{V^n_{i+1,r}, V^n_{i+1,s} \text{ coalesce in a vertex of degree at most } \e_n D^n_i}.
$$

\begin{lemma}\label{lemme distance en variation totale aux Bernoulli}
    For all $n\ge 0$ large enough, for all $k\ge 2$, let $(B_{i,k}^{n})_{i\ge 0}$ a family of independent Bernoulli random variables of parameters $\binom{k}{2}p_{n,i}$. Then for all $\e>0$,
    $$
    d_{\mathrm{TV}}\left( 
    \left(X^n_{i,k}\right)_{\e n < i < (1-\e)n}, 
    \left( B^{n }_{i,k}\right)_{\e n < i < (1-\e)n}
    \right)
    \cv[n] 0,
    $$
    where $d_\mathrm{TV}$ denotes the total variation distance.
\end{lemma}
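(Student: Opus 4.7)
The plan is to exploit the fact that, on a high-probability event, the variables $(X^n_{i,k})_{\varepsilon n<i<(1-\varepsilon)n}$ are in fact mutually independent. By Lemma \ref{lemme profil tend vers l'infini}, for $n$ large enough and every $i\in(\varepsilon n,(1-\varepsilon)n)$ we have $D^n_i\ge k$, and the completion step then ensures that $(V^n_{i+1,1},\ldots,V^n_{i+1,k})$ consists of $k$ distinct vertices at height $i+1$. Since the attachments at different heights are mutually independent, since the labels $V^n_1,\ldots,V^n_k$ are uniformly chosen among a deterministic set of vertex labels (hence independent of the attachments), and since $(V^n_{i+1,r})_{r\in[k]}$ is measurable with respect to the $\sigma$-algebra generated by $V^n_1,\ldots,V^n_k$ and the attachments at heights strictly above $i$, the conditional law of the parents of $(V^n_{i+1,r})_r$ at height $i$ given this $\sigma$-algebra is exactly that of $k$ labels drawn without replacement from the urn containing $d^n_{i,j}$ copies of label $j$. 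This urn distribution depends only on the deterministic sequence $(d^n_{i,j})_{j\ge 1}$, so iterating the tower property from the top down yields that the family $(X^n_{i,k})_i$ is mutually independent.

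Granted independence, it suffices to bound $\sum_i d_{\mathrm{TV}}(X^n_{i,k},B^n_{i,k})$. Linearity applied to the urn description gives $\mathbb{E}[X^n_{i,k}]=\binom{k}{2}p_{n,i}$. To bound $q_{n,i}\coloneqq\mathbb{P}(X^n_{i,k}\ge 2)$, I split into two cases: two disjoint tracked pairs share (possibly different) small-degree fathers, contributing at most $C_k p_{n,i}^2$ by a direct urn computation; or three or more tracked vertices share a common small-degree father, contributing at most $C_k\,\varepsilon_n p_{n,i}$ since each such father has degree at most $\varepsilon_n D^n_i$, producing an extra factor $d^n_{i,j}/D^n_i\le\varepsilon_n$. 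Since $X^n_{i,k}\le\binom{k}{2}$ and $\mathbb{E}[X^n_{i,k}]=\binom{k}{2}p_{n,i}$, an elementary computation then yields
$$ d_{\mathrm{TV}}(X^n_{i,k},B^n_{i,k})\le \left(\tbinom{k}{2}+1\right) q_{n,i}\le C_k'\left(p_{n,i}^2+\varepsilon_n p_{n,i}\right). $$

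To conclude, I sum over $i\in(\varepsilon n,(1-\varepsilon)n)$ and use
$$\sum_{i} d_{\mathrm{TV}}(X^n_{i,k},B^n_{i,k})\le C_k'\left(\max_i p_{n,i}+\varepsilon_n\right)\sum_i p_{n,i}.$$
The weak convergence \eqref{eq:cv rho epsilon n} gives $\sum_i p_{n,i}\to\rho([\varepsilon,1-\varepsilon])<\infty$, \eqref{eq p n i petit} gives $\max_i p_{n,i}\to 0$, and by construction $\varepsilon_n\to 0$, so the sum vanishes.

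The main obstacle, in my view, is the clean justification of the effective independence of $(X^n_{i,k})_i$ despite the tracked vertex tuples being strongly coupled across heights through the tree structure. The key point is that the labels $V^n_1,\ldots,V^n_k$ are independent of the attachments and that the conditional law of $X^n_{i,k}$ given the past depends only on the profile at height $i$ (not on the identities of the tracked vertices); once this is crystallised, the combinatorial bounds and the final summation are routine.
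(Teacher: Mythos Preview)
Your proof is correct and follows essentially the same route as the paper's. Both reduce to bounding $\sum_i d_{\mathrm{TV}}(X^n_{i,k},B^n_{i,k})$ via the independence of the $X^n_{i,k}$'s across heights, and both control the per-height error by the same two combinatorial contributions (a triple sharing a small-degree father, giving $O(\varepsilon_n p_{n,i})$, and two disjoint pairs each coalescing, giving $O(p_{n,i}^2)$). The only cosmetic differences are that the paper leaves the independence implicit (``By definition of $X^n_{i,k}$'') whereas you spell out the urn argument, and that the paper packages the per-height bound as $\mathbb{E}[(X^n_{i,k})^2]-\mathbb{E}[X^n_{i,k}]$ via an explicit coupling $B^n_{i,k}\ge \mathbf 1_{X^n_{i,k}\ge 1}$, while you go through $q_{n,i}=\mathbb{P}(X^n_{i,k}\ge 2)$ and the pmf formula for total variation.
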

\begin{proof}
Let $\e>0$. By definition of $X^n_{i,k}$, it is enough to show that there exists a coupling between $X^n_{i,k}$ and $B^n_{i,k}$ such that
\begin{equation}\label{eq:somme des probas X different de B}
    \sum_{\e n \le i \le (1-\e) n } \P(X^n_{i,k} \neq B^n_{i,k}) \cv[n] 0.
\end{equation}
Note that by Lemma \ref{lemme profil tend vers l'infini}, for $n$ large enough, for all $\e n < i < (1- \e) n$, we have $D^n_i \ge k$, so that the vertices $V^n_{i+1,1}, \ldots, V^n_{i+1,k}$ are distinct. First of all, we see that
\begin{equation}\label{eq:espX}
    \E \left[X^n_{i,k}\right] =
     \sum_{j\ge 1} \frac{\binom{d^n_{i,j}}{2} \binom{D^n_i -2}{k-2}}{\binom{D^n_i}{k}} \1_{d^n_{i,j} \le \e_n D^n_i}\\
     =
     \binom{k}{2} p_{n,i}.
\end{equation}
Besides, we have
\begin{align}
    \E\left[
    (X^n_{i,k})^2
    \right]
    =& \E\left[X^n_{i,k}\right] \label{espcarre1} \\
    &+
    6 \E\left[
    \sum_{1 \le r_1 < r_2 < r_3 \le k}
    \1_{V^n_{i+1,r_1}, V^n_{i+1,r_2}, V^n_{i+1,r_3} \text{ coalesce in a vertex of degree at most } \e_n D^n_i} 
    \right] \label{espcarre2}\\
    &+ \E \left[\sum_{\substack{1\le r_1 < s_1 \le k\\ 1 \le r_2 < s_2 \le k}} \1_{\{r_1,s_1\}\cap \{r_2,s_2\} = \emptyset} \1_{\substack{V^n_{i+1,r_1}, V^n_{i+1,s_1} \text{ coalesce in a vertex of degree at most } \e_n D^n_i \\\text{and }V^n_{i+1,r_2}, V^n_{i+1,s_2} \text{ coalesce in a vertex of degree at most } \e_n D^n_i}} \right]\label{espcarre3}
\end{align}
The first term (\ref{espcarre1}) is $\binom{k}{2} p_{n,i}$ by (\ref{eq:espX}). The second term (\ref{espcarre2}) is
\begin{equation}\label{eq: majoration espcarre2}
\sum_{j\ge 1} \frac{\binom{d^n_{i,j}}{3} \binom{D^n_i-3}{k-3}}{\binom{D^n_i}{k}} \1_{d^n_{i,j} \le \e_n D^n_i} 
= \1_{D^n_i\ge 3}\sum_{j\ge 1} \frac{d^n_{i,j}-2}{3} \frac{k-2}{D^n_i-2} \frac{\binom{d^n_{i,j}}{2} \binom{D^n_i-2}{k-2}}{\binom{D^n_i}{k}} \1_{d^n_{i,j} \le \e_n D^n_i} 
\le \e_n \frac{k-2}{3} \binom{k}{2} p_{n,i},
\end{equation}
while the third term (\ref{espcarre3}) is 
\begin{align}
    &\sum_{\substack{j_1,j_2\ge 1 \\ j_1 \neq j_2}} \1_{d^n_{i,j_1}, d^n_{i,j_2} \le \e_n D^n_i}\frac{\binom{d^n_{i,j_1}}{2}\binom{d^n_{i,j_2}}{2}\binom{D^n_i-4}{k-4}}{\binom{D^n_i}{k}}
    + 6 \sum_{j\ge 1} \frac{\binom{d^n_{i,j}}{4} \binom{D^n_i - 4}{k-4}}{\binom{D^n_i}{k}} \1_{d^n_{i,j} \le \e_n D^n_i}\notag\\
    &=\1_{D^n_i\ge 4} \binom{k}{2}\binom{k-2}{2}
    \sum_{\substack{j_1,j_2\ge 1 \\ j_1 \neq j_2}}
    \frac{\binom{d^n_{i,j_1}}{2}}{\binom{D^n_i}{2}}
    \frac{\binom{d^n_{i,j_2}}{2}}{\binom{D^n_i-2}{2}}
    \1_{d^n_{i,j_1}, d^n_{i,j_2} \le \e_n D^n_i} +  6 \1_{D^n_i\ge 4}  \binom{k}{4} \sum_{j\ge 1} \frac{\binom{d^n_{i,j}}{4}}{\binom{D^n_i}{4}} \1_{d^n_{i,j} \le \e_n D^n_i}
    \notag\\
    &\le\1_{D^n_i\ge 4} \frac{D^n_i(D^n_{i}-1)}{(D^n_i-2)(D^n_i-3)} \binom{k}{2}\binom{k-2}{2}
    \sum_{\substack{j_1,j_2\ge 1 \\ j_1 \neq j_2}}
    \frac{\binom{d^n_{i,j_1}}{2}}{\binom{D^n_i}{2}}
    \frac{\binom{d^n_{i,j_2}}{2}}{\binom{D^n_i}{2}}
    \1_{d^n_{i,j_1}, d^n_{i,j_2} \le \e_n D^n_i} +  6 \1_{D^n_i\ge 4}  \binom{k}{4} \sum_{j\ge 1} \frac{\binom{d^n_{i,j}}{2}^2}{\binom{D^n_i}{2}^2} \1_{d^n_{i,j} \le \e_n D^n_i}
    \notag\\
    &\le
    \1_{D^n_i\ge 4}\left(\frac{D^n_i(D^n_{i}-1)}{(D^n_i-2)(D^n_i-3)}
    \binom{k}{2}\binom{k-2}{2} + 6 \binom{k}{4}\right)
    \left(\sum_{j\ge 1} \frac{\binom{d^n_{i,j}}{2}}{\binom{D^n_i}{2}}
    \1_{d^n_{i,j} \le \e_n D^n_i} \right)^2 \notag\\
    &\le 12\binom{k}{2}\binom{k-2}{2} (p_{n,i})^2. \label{eq: majoration espcarre3}
\end{align}
For all $n$ large enough, let $B^n_{i,k}$ be a Bernoulli random variable of parameter $\binom{k}{2} p_{n,i}$ such that $B^n_{i,k}\ge \1_{X^n_{i,k}\ge 1}$. One can indeed perform such a coupling given that $\P(X^n_{i,k}\ge 1) \le \E\left[X^n_{i,k}\right]=\binom{k}{2} p_{n,i} \le 1$ for $n$ large enough.
As a result,
\begin{align*}
\sum_{\e n \le i \le (1-\e) n } \P(X^n_{i,k} \neq B^n_{i,k})
&\le \sum_{\e n \le i \le (1-\e) n } \left(\P(X^n_{i,k} \neq \1_{X^n_{i,k}\ge 1})
+
\P(B^n_{i,k} \neq \1_{X^n_{i,k}\ge 1})
\right)\\
&\le \sum_{\e n \le i \le (1-\e) n } \left(
\E\left[
\left\vert 
X^n_{i,k} - \1_{X^n_{i,k}\ge 1}
\right\vert^2
\right]
+\E\left[
X^n_{i,k}
\right]
-\P(X^n_{i,k}\ge1)
\right)
\\
&= \sum_{\e n \le i \le (1-\e) n } \left(
\E\left[(X^n_{i,k})^2\right]
-\E\left[
X^n_{i,k}
\right]
\right) \\
&\le 
\sum_{\e n \le i \le (1-\e) n } \left(
\e_n \frac{k-2}{3} \binom{k}{2} p_{n,i}
+
12 \binom{k}{2}\binom{k-2}{2} (p_{n,i})^2
\right) \text{ by \eqref{eq: majoration espcarre2} and \eqref{eq: majoration espcarre3},} 
\\
&\cv[n] 0 \text{ by \eqref{eq:cv rho epsilon n} and \eqref{eq p n i petit}},
\end{align*}
hence (\ref{eq:somme des probas X different de B}) {follows}.
\end{proof}

Combining Lemmas \ref{lemme Bernoulli Poisson} and \ref{lemme distance en variation totale aux Bernoulli} we obtain the scaling limit of the point processes induced by the $X^n_{q,r,i}$'s which are defined in \eqref{eq indicatrice petite coalescence}.
\begin{corollary}\label{cor: cv petites coalescences}
    We have the {vague} convergence of measures on $(0,1)$ in distribution
    $$
    \left(\sum_{i\ge 0} X^n_{q,r,i} \delta_{i/n} \right)_{1 \le q<r \le k}
    \cvloi[n]
    \left(\Gamma_{q,r}\right)_{1 \le q<r \le k},
    $$
    where the $\Gamma_{q,r}$'s for $1 \le q<r\le k$ are independent Poisson point processes of intensity $\rho(\mathrm{d} t)$.
\end{corollary}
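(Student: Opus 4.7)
The plan is to combine Lemmas \ref{lemme Bernoulli Poisson} and \ref{lemme distance en variation totale aux Bernoulli} through a uniform colouring argument. First, I would apply Lemma \ref{lemme distance en variation totale aux Bernoulli} to replace, on any window $[\e n,(1-\e) n]$, the family $(X^n_{i,k})_i$ by a family of independent Bernoullis $(B^n_{i,k})_i$ of parameter $\binom{k}{2}p_{n,i}$, up to vanishing total variation distance. Revisiting the proof of Lemma \ref{lemme Bernoulli Poisson} verbatim with $\binom{k}{2}p_{n,i}$ in place of $p_{n,i}$ (the sole input being the weak convergence $\sum_i \binom{k}{2}p_{n,i}\delta_{i/n}\to \binom{k}{2}\rho$ which follows from \eqref{eq:cv rho epsilon n} together with \eqref{eq p n i petit}), this yields
\[
\sum_{i\ge 0} X^n_{i,k}\,\delta_{i/n} \cvloi[n] \Pi,
\]
where $\Pi$ is a Poisson point process on $(0,1)$ of intensity $\binom{k}{2}\rho$. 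As a by-product, with probability tending to one we have $X^n_{i,k}\in\{0,1\}$ for every $\e n<i<(1-\e) n$, so at each such height at most one pair among $V^n_{i+1,1},\ldots,V^n_{i+1,k}$ coalesces into a small-degree vertex.

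Next, I would use the exchangeability in $[k]$ of the joint law of $(V^n_{i+1,1},\ldots,V^n_{i+1,k})$, which follows from the exchangeability of the initial sample $(V^n_1,\ldots,V^n_k)$ together with the fact that the downward construction of Section \ref{sous-section coalescent discret} can be carried out in a genuinely symmetric way (the ``smallest integer'' tie-breaking only affects the labelling of ancestor positions and does not change the unordered pair $\{q,r\}$ that actually coalesces). Consequently, conditionally on $(X^n_{i,k})_i$ and on the high-probability event that no two pairs coalesce at the same height, the unique coalescing pair at height $i$ is uniformly distributed on $\{(q,r):1\le q<r\le k\}$, and these choices are independent across $i$. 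By the classical colouring property of Poisson point processes, a uniform $\binom{k}{2}$-colouring of $\Pi$ splits it into $\binom{k}{2}$ independent Poisson point processes of intensity $\rho$, which then gives the claimed joint convergence of the $(\sum_i X^n_{q,r,i}\delta_{i/n})_{q<r}$.

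The main subtlety will be making the exchangeability and independent-colouring step fully rigorous despite the asymmetries built into the construction of the $V^n_{i,r}$. A clean way to handle this is to precompose the downward construction with a uniform random permutation of $\{1,\ldots,k\}$: this does not alter the joint law of the coalescent (by exchangeability of $V^n_1,\ldots,V^n_k$), but makes the conditional distribution of the coalescing pair at each height manifestly uniform, reducing everything to the Poisson limit of the total number of small coalescences obtained in the first step.
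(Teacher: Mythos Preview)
Your proposal is correct and follows essentially the same route as the paper: both use Lemma~\ref{lemme distance en variation totale aux Bernoulli} to reduce the total number of small coalescences to independent Bernoullis, both invoke the conditional uniformity and across-height independence of the coalescing pair, and both use Lemma~\ref{lemme Bernoulli Poisson} to pass to the Poisson limit. The only difference is the order of operations: the paper performs the colouring at the Bernoulli level, first showing that $(X^n_{q,r,i})_{q<r,i}$ is close in total variation to independent Bernoulli families $(B^n_{q,r,i})_{q<r,i}$ of parameter $p_{n,i}$ and only then taking the limit; you instead first take the limit of the aggregated process and then apply Poisson colouring. Both give the corollary; the paper's route yields a slightly stronger intermediate statement (a total-variation approximation for the full family), which is not needed here.

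One small remark: your worry about the ``smallest integer'' tie-breaking is unnecessary, and the random-permutation fix, while valid, is not required. The key point (which the paper simply asserts) is that, conditionally on $(V^n_{i+1,1},\ldots,V^n_{i+1,k})$ being \emph{any} $k$ distinct vertices at height $i+1$, the father map at height $i$ is uniform and independent of heights $>i$; hence the coalescence partition of $[k]$ it induces has a fixed exchangeable law depending only on $i$. This already gives the i.i.d.\ uniform colouring, regardless of whether the labelled tuple $(V^n_{i+1,r})_r$ itself is exchangeable.
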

\begin{proof}
    By Lemma \ref{lemme distance en variation totale aux Bernoulli}, we know that with high probability, at each height we have at most one coalescence in a vertex of degree at most $\e_n D^n_i$. Moreover, conditionally on $(X^n_{i,k})_{i \ge 1}$, the indices $q<r$ of the vertices $V^n_{i+1,q}, V^n_{i+1,r}$ which coalesce at each height are uniform. For all $n,i\ge 1$ let $(Q^n_i,R^n_i)$ be independent random variables which are uniform in $\{(q,r), \ 1 \le q<r \le k\}$ and taken independently of the $B^n_{i,{k}}$'s. Then,
    $$
    d_{\mathrm{TV}}\left(
    \left(X^n_{q,r,i}\right)_{\e n <i < (1-\e) n, 1 \le q < r \le k}
    ,
    \left(\1_{(Q^n_i,R^n_i)=(q,r)}B^n_{i,{k}}\right)_{\e n <i < (1-\e) n, 1 \le q < r \le k}
    \right)
    \cv[n] 0.
    $$
    Besides, {let $\left(B^n_{q,r,i}\right)_{i\ge 1}$ for $1 \le q<r \le k$ be i.i.d.\@ copies of the family $\left(B^n_{i}\right)_{i\ge 1}$ defined in Lemma \ref{lemme Bernoulli Poisson}}. One can see that
    \begin{align*}
    &d_{\mathrm{TV}} \left(
    \left(\1_{(Q^n_i,R^n_i)=(q,r)}B^n_{i,{k}}\right)_{\e n <i < (1-\e) n, 1 \le q < r \le k}
    ,
    \left(B^n_{q,r,i}\right)_{\e n <i < (1-\e) n, 1 \le q < r \le k}
    \right)\\
    &\le \!\!\!\!\!\!\!\sum_{\e n <i < (1-\e)n} \left( \P\left(\sum_{1 \le q<r\le k} B^n_{q,r,i} \ge 2\right) + \left\vert \P\left(B^n_{i,{k}} = 1\right) - \P\left(\sum_{1 \le q<r\le k} B^n_{q,r,i} = 1\right)\right\vert + \left\vert \P\left(B^n_{i,{k}} = 0\right) - \P\left(\sum_{1 \le q<r\le k} B^n_{q,r,i} = 0\right)\right\vert\right)\\
    &= 2 \sum_{\e n < i <(1-\e) n}\P\left(\sum_{1 \le q<r\le k} B^n_{q,r,i} \ge 2\right)\\
    &\le 2\binom{\binom{k}{2}}{2} \sum_{\e n < i <(1-\e)n} p_{n,i}^2\cv[n] 0,
    \end{align*}
    where the convergence comes from \eqref{eq:cv rho epsilon n} and \eqref{eq p n i petit}.
    Thus we have the stronger result
    $$
    d_{\mathrm{TV}}\left(
    \left(X^n_{q,r,i}\right)_{\e n <i < (1-\e) n, 1 \le q < r \le k}
    ,
    \left(B^n_{q,r,i}\right)_{\e n <i < (1-\e) n, 1 \le q < r \le k}
    \right)
    \cv[n] 0.
    $$
    One concludes using Lemma \ref{lemme Bernoulli Poisson}.
\end{proof}
Now, let us study the coalescences in high degree vertices. 
Recall that in the continuous coalescent, the large merges are described by the independent random variables $\Theta_{t,r}$ of law $(\theta_j(t))_{j\ge 0}$ for $r \in [k]$ and $t$ such that $\theta_1(t)>0$ which are defined in Section \ref{sec: coalescent continu}. Recall also that $S^\Theta(t,j)\cap [k]=  \{ r \in [k], \ \Theta_{t,r} = j\} $.

By \ref{hyp:splitatomes}, we may also assume that $\e_n$ decreases slowly enough and that there exists a sequence $(\eta_n)_{n \ge 0}$ such that $n \eta_n \to \infty$ and for all $n \ge 0$, for all $(i,j),(i',j')$ with $\e_n n < i,i' < (1-\e_n) n$ such that $d^n_{i,j} > \e_n D^n_i$ and $d^n_{i',j'} > \e_n D^n_{i'}$, then we have either $i=i'$ or $\vert i-i' \vert > \eta_n n$.

\begin{lemma}\label{lem: cv grandes coalescences}
    For all $\e'>0$, we have the {vague} convergence of measures on $(0,1)\times (\N\cup \{0\})^k$ in distribution
    $$
    \sum_{1 < i \le n}
    \1_{d^n_{i,1}>\e' D^n_i} \delta_{(i/n, (\Theta^n_{i,r})_{r \in [k]})}
    \cvloi[n]
    \sum_{t\ge 0} \1_{\theta_1(t)>\e'}\delta_{(t,(\Theta_{t,r})_{r \in [k]})} .
    $$
\end{lemma}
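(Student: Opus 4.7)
The plan is to use the bijection $I_n$ from the earlier lemma to identify the heights $i$ contributing to the left-hand side, and then to establish convergence of the label tuples $(\Theta^n_{i, r})_{r \in [k]}$ jointly across those heights.

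First I would reduce the problem to bulk heights. For $n$ large enough that $\e_n < \e'$, the earlier lemma gives a bijection between the heights $\{i : d^n_{i,1} > \e' D^n_i, \, \e_n n < i < (1-\e_n) n\}$ and the set $T_{\e'} := \{t : \theta_1(t) > \e'\}$, with $I_n(t)/n \to t$ by \eqref{eq: cv I sur n vers t}. Heights with $i/n \notin (\e_n, 1-\e_n)$ do not matter for vague convergence on $(0,1) \times (\N\cup\{0\})^k$, since compactly supported test functions on $(0,1)$ vanish near $\{0,1\}$. Hence it suffices to show that for any finite subset $F\subset T_{\e'}$,
\[
\big( (I_n(t)/n, (\Theta^n_{I_n(t), r})_{r \in [k]}) \big)_{t \in F} \cvloi \big( (t, (\Theta_{t, r})_{r \in [k]}) \big)_{t \in F}.
\]
The first coordinate converges deterministically by \eqref{eq: cv I sur n vers t}, so the task reduces to establishing the joint convergence of the labels.

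Second, I would analyse the conditional law of $(\Theta^n_{i, r})_{r \in [k]}$ at a single height $i = I_n(t)$. Recall that $\T^n$ is built level by level by attaching, uniformly and independently across levels, the vertices at height $i+1$ to the $D^n_i$ child-slots of the height-$i$ vertices. Lemma \ref{lemme profil tend vers l'infini} gives $D^n_i \to \infty$ in the bulk, and so the $(V^n_{i+1, r})_{r \in [k]}$ are distinct with probability tending to one. Conditionally on them, their parents form a uniform sample without replacement from the multiset of child-slots at height $i$, which means $(\Theta^n_{i, r})_{r \in [k]}$ has the law of $k$ samples without replacement from the distribution on $\N\cup\{0\}$ putting mass $d^n_{i,j}/D^n_i$ on $j$ when $d^n_{i,j}> \e_n D^n_i$ and aggregating the remaining mass on $0$. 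Since $D^n_i\to\infty$, the total variation distance between this law and the corresponding i.i.d.\@ product tends to zero, and by \eqref{eq:cv atomes L2} the marginal law converges to $(\theta_j(t))_{j \ge 0}$. This yields the single-height marginal convergence to $(\Theta_{t, r})_{r \in [k]}$.

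Third, to get joint convergence across distinct times $t_1 > \ldots > t_m$ in $F$, set $i_l = I_n(t_l)$ and reveal the attachments from top to bottom. Conditionally on the tree above height $i_m+1$ and on the set $(V^n_{i_m+1, r})_{r \in [k]}$ being distinct, the attachment at level $i_m$ is uniform and independent of everything above it by the level-by-level construction of $\T^n$, so $(\Theta^n_{i_m, r})_{r \in [k]}$ has the sampling-without-replacement law at height $i_m$, which depends only on the degree sequence at that height. Iterating this argument from $l=1$ down to $l=m$ shows that the tuples $(\Theta^n_{i_l, r})_{r \in [k]}$ become asymptotically independent across $l$, with each marginal as in the previous paragraph. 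Combined with the per-height convergence this yields the desired weak convergence of the point process.

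The main obstacle will be formalising this conditional independence cleanly: although the attachments at different levels are independent by construction, the random choices $(V^n_{i+1, r})_{r \in [k]}$ couple consecutive levels through the iterative downward definition. The key observation, which will require careful bookkeeping, is that conditionally on the distinctness of $(V^n_{i+1, r})_{r \in [k]}$ (a high-probability event by Lemma \ref{lemme profil tend vers l'infini}), the law of $(\Theta^n_{i, r})_{r \in [k]}$ depends only on the degree sequence at height $i$ and is thus insensitive to the conditioning on the tree above.
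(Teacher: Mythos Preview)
Your proposal is correct and follows essentially the same strategy as the paper. Both arguments exploit that, conditionally on $(V^n_{i+1,r})_{r\in[k]}$ being distinct, the attachment at level $i$ is a uniform bijection independent of everything above, so the law of $(\Theta^n_{i,r})_{r\in[k]}$ depends only on the degree sequence at height $i$ and is independent across heights; the only stylistic difference is that the paper factors through the block sizes $N^n_{i,j,k}$ (computing an explicit hypergeometric-to-multinomial limit), whereas you approximate sampling without replacement by i.i.d.\ sampling and invoke \eqref{eq:cv atomes L2} for the marginal.
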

\begin{proof}
    For all $n,i\ge 0$, $j\ge 1$, let $N^n_{i,j,k}$ be the number of vertices among $V^n_{i+1,1}, \ldots, V^n_{i+1,k}$ with father $(i,j)$. 
    Note that by Lemma \ref{lemme profil tend vers l'infini}, for $n$ large enough, for all $\e n < i < (1- \e) n $, we have $D^n_i \ge k$ so that the vertices $V^n_{i+1,1}, \ldots , V^n_{i+1,k}$ are distinct. Conditionally on the $N^n_{i,j,k}$'s for $\e n < i < (1- \e) n $ and $j\ge 1$ such that $d^n_{i,j}>\e_n D^n_i$, the $N^n_{i,j,k}$ vertices which coalesce at $(i,j)$ are chosen uniformly at random among $V^n_{i+1,1}, \ldots, V^n_{i+1,k}$. More precisely, the sets $S^{n,k}_{i,j}\coloneqq \{ r \in [k], \ \Theta^n_{i,r} = j\}$ for $j \in \{j \ge 1, \ d^n_{i,j} > \e_n D^n_i \}\cup \{0\}$ form a random partition of $[k]$ with prescribed sizes $N^n_{i,j,k}$ which is chosen uniformly at random independently for all $i\ge 1$ and conditionally on the $N^n_{i,j,k}$'s. {Moreover, due to the fact that the $\Theta_{t,r}$'s for $r\in [k]$ are i.i.d.\@, the sets $S^\Theta(t,j) \cap [k]= \{r \in [k], \ \Theta_{t,r}=j \}$ for $j\ge 1$ similarly form a random partition of $[k]$ which is chosen uniformly at random among partitions of $[k]$ with prescribed sizes $N_{t,j,k}^\Theta:=\#\{r\in [k], \ \Theta_{t,r}= j \}$'s for $j\ge 1$, conditionally on the $N_{t,j,k}^\Theta$'s.} 
    
    {Recall from Lemma \ref{lemme epsilon n} the definition of $I_n$.} The statement of the lemma is thus implied by the {following} convergence for all $t\ge 0$ such that $\theta_1(t)>0${:}
    \begin{equation}\label{eq:convergence N_IJ}
    \left(
    N^n_{I_n(t), j,k}
    \right)_{j\ge 1}
    \cvloi[n]
    (\# (S^\Theta(t,j) \cap [k]))_{j\ge 1}.
    \end{equation}
    Let $t\ge 0$ such that $\theta_1(t)>0$. Let $(m_j)_{j\ge 1}$ be a family of non-negative integers such that $\sum_{j\ge 1} m_j \le k$. Then, for $n$ large enough,
    \begin{align*}
    \P\left(
    \forall j\ge 1,
    \
    N^n_{I_n(t),j,k} = m_j
    \right)
    &=
    \left({\binom{D^n_{I_n(t)}-\sum_{j\ge 1} d^n_{I_n(t), j}\1_{\theta_j(t)>\e_n}}{k-\sum_{j\ge 1} m_j}\prod_{j\ge 1} \binom{d^n_{I_n(t),j}}{m_j}}\right){\binom{D^n_{I_n(t)}}{k}}^{-1}
    \\
     &\cv[n] \left(
    \frac{\theta_0(t)^{k-\sum_{j\ge 1} m_j}}{(k-\sum_{j\ge 1} m_j)!}
    \prod_{j\ge 1} \frac{\theta_j(t)^{m_j}}{m_j!}
    \right)
    {k!}=\P\left(
    \forall j \ge 1, \ \# (S^\Theta(t,j) \cap [k])= m_j
    \right),
    \end{align*}
    by the convergence \eqref{eq:cv atomes L2}.
\end{proof}

\begin{proof}[Proof of Proposition \ref{prop: cv coalescent Skorokhod}]
    Since the processes take their values in a finite set and since the number of jumps of $(( \Part^k_r(t))_{r \in [k]})_{t \in [0,1]}$ is finite {thanks to Lemma \ref{lemme coalescent continu bien defini}}, it suffices to prove the convergence of the times of jumps and of the jumps.

    Actually, by definition of the two processes, it suffices to prove the joint convergence of the heights of the uniform vertices \eqref{eq cv hauteurs unif}, of the small coalescences on $(0,1)$
    \begin{equation}\label{eq cv petites coalescences}
    \left(\sum_{ 1\le i < n}X^n_{q,r,i} \delta_{i/n}\right)_{1 \le q<r \le k}\cvloi[n] \left(\Gamma_{q,r}\right)_{1 \le q<r\le k}
    \end{equation}
    and of the coalescence at vertices of high degrees on $(0,1)$
    \begin{equation}\label{eq cv grandes coalescences}
    \forall \e' >0,\qquad \sum_{1\le i < n} \1_{d^n_{i,1} \ge \e' D^n_i} \delta_{(i/n, (\Theta^n_{i,r})_{r\in [k]})}\cvloi[n] \sum_{t \in (0,1)} \1_{\theta_1(t)>\e'} \delta_{(t,(\Theta_{t,r})_{r \in [k]})}.
    \end{equation}
    {The fact that we can restrict our attention to the degrees $d^n_{i,1} \ge \e' D^n_i$ for $\e'$ arbitrarily small instead of $d^n_{i,1} \ge \e_n D^n_i$ comes from the fact that for all $0<x<y<1$,
    \[\sum_{x\le t\le y} \sum_{j\ge 1} \theta_j(t)^2 \1_{\theta_j(t)\le \e'} \mathop{\longrightarrow}\limits_{\e' \to 0} 0\]
    due to \eqref{eq:pas de coalescence avec proba non nulle} and from \eqref{eq:cv atomes L2} which implies that for all $0<x<y<1$,
    \[
    \sup_{n\ge 1} \sum_{xn \le i \le yn} \sum_{j\ge 1} \left( \frac{d^n_{i,j}}{D^n_i}\right)^2 \1_{\e_n D^n_i \le d^n_{i,j} \le \e' D^n_i} \mathop{\longrightarrow}\limits_{\e' \to 0} 0.
    \]
    The above convergence, combined with a union bound, shows that with probability going to $1$ as $\e'\to 0$, for all $n$ the coalescent $((\Part^{n,k}_r(i))_{r \in [k]})_{0\le i \le n}$ does not undergo coalescences due to vertices of degrees $d^n_{i,j}$ between $D^n_i\e_n$ and $D^n_i \e'$.}

    The convergence \eqref{eq cv petites coalescences} comes from Corollary \ref{cor: cv petites coalescences}. The convergence \eqref{eq cv grandes coalescences} holds thanks to Lemma \ref{lem: cv grandes coalescences} and holds jointly with the second one since with high probability coalescences in small degrees do not happen at heights $i$ such that $d^n_{i,1} \ge \e' D^n_i$. Indeed, for all $\eta>0$, by a union bound, the probability of having a small coalescence at a height $\eta n \le i \le (1-\eta) n$ such that $d^n_{i,1} \ge \e' D^n_i$ is upperbounded by
    $$
    \sum_{\eta n \le i\le (1-\eta) n} \1_{d^n_{i,1} > \e' D^n_i} p_{n,i}
    \le 
    \left(\max_{\eta n \le i \le (1-\eta)n} p_{n,i}\right)
    \sum_{\eta n \le i\le (1-\eta) n} \frac{1}{(\e')^2} {\left(\frac{d^n_{i,1}}{{D^n_i}}\right)^2} \1_{d^n_{i,1} > \e' D^n_i}
    \cv[n] 0,
    $$
    where the convergence comes from {\eqref{eq:pas de coalescence avec proba non nulle},} \eqref{eq:cv atomes L2} and \eqref{eq p n i petit}.
    Finally, \eqref{eq cv hauteurs unif} comes from \ref{hyp:naissance} and holds jointly with \eqref{eq cv petites coalescences} and \eqref{eq cv grandes coalescences} since the conditional law of $((\Theta^n_{i,r})_{r \in [k], i\ge 0},(X^n_{q,r,i})_{1 \le q<r \le k, i\ge 0})$ does not depend on $(\h(V^n_r))_{r \in [k]}$.
\end{proof}

\section{GHP convergence} \label{sec:proofGHP}
To show the GHP convergence of $\T^n$, by \cref{B.2}, it suffices to prove that $\T^n$ is close to the genealogies  of a fixed large number of random vertices (that is their paths to the root). We would like to use the same method as  \cite{BBKKbis23+}. {Roughly speaking, this method consists in getting a lower bound for the number of ancestors at a given height of $k$ typical vertices, and then in showing that the probability that another given vertex coalesces with one of these ancestors (on a well chosen time interval) is bounded away from zero.} However, we cannot as we have {a} poor estimate on the number of genealogies that still have not yet merged at {a} given height. 

To avoid this issue, we look instead at the genealogies of vertices chosen such that at each height exactly $k$ of them have not yet merged. We call the union of the genealogies a $k$-trail as those sets have at each height $k$ vertices and consists of paths going to the root (see Figure \ref{fig:3trail}). We construct inductively $k$-trails (\cref{lem:trailconstruct}), and show that $k^2$-trails tend to be close to $k$-trails (\cref{lem:chain}). By a chaining argument this gives us that $\T^n$ is close to $k$-trails (\cref{pro:chain}). Finally, we prove that $k$-trails stay close to the genealogies of a large number of random vertices, which allows us to conclude.

\begin{figure} 
\centering
\includegraphics[scale=0.7]{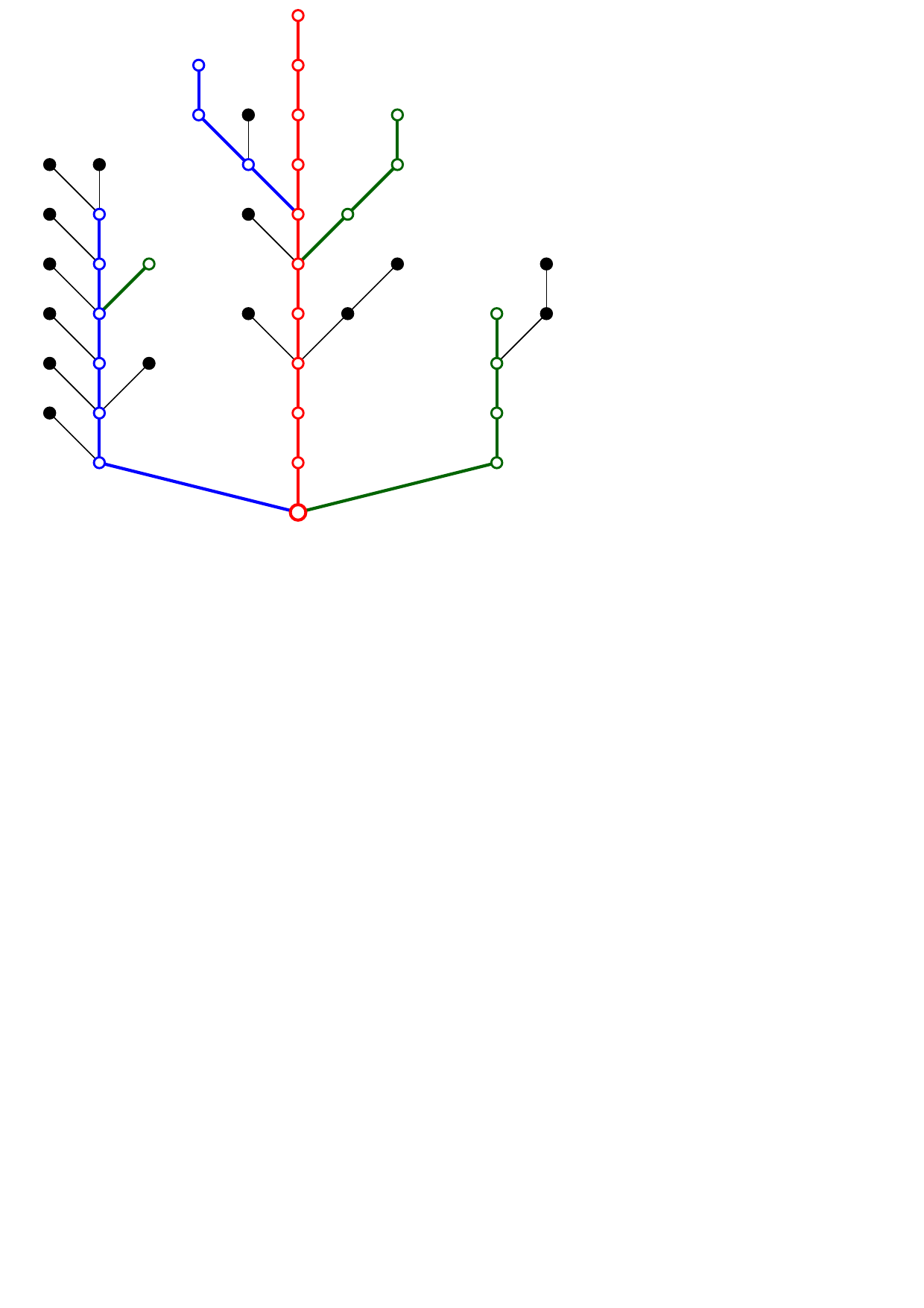}
\caption{An example of 3-trail. The root is at the bottom. The vertices of the 3-trail are represented as empty circle{s}. The vertices of the first trail $\{X_{i,1}\}_{1\leq i \leq \h^n}$ with their edge to their father are represented in red. The second trail $\{X_{i,2}\}_{1\leq i \leq \h^n}$ is in blue. And the third $\{X_{i,3}\}_{1\leq i \leq \h^n}$ is in green. Note that at each height there are exactly 3 vertices in the 3-trail, except when in the tree there are less than $k$ vertices at this height in which case all of them are in the 3-trail. Note also that the fathers of the vertices in the 3-trail are also in the 3-trail.} \label{fig:3trail}
\end{figure}

\subsection{Using $k$-trails to approximate $\T^n$}
For every $n$, $i\in \N$, $j\leq D_{i-1}^n$ we denote by $\fath^n(i,j)=\fath^n((i,j))$ the father of the vertex $(i,j)$ in the tree $\T^n$. Also, for $i\geq 0$ let $\fath^n_i$ be the i-$th$ iterate of $\fath^n$.
In other words, $\fath^n_i(x)$ is the ancestor of $x$ with height $i$ below $x$.

In this whole section, $(X^n_{i,j})_{i\leq \h^n,j\in \N}$ is a family of random variables satisfying the following properties:
\begin{itemize}
\item[\textcolor{link}{\optionaldesc{$(\h)$}{cons:height}}] For every $i\leq \h^n$, for every $j\in \N$, the random variable $X^n_{i,j}$ is a vertex of $\T^n$ of height $i$. 
\item[\textcolor{link}{\optionaldesc{$(\neq)$}{cons:neq}}] For every $i\leq \h^n$, for every $j\neq j'\leq D_{i-1}^n$, we have a.s. $X^n_{i,j}\neq X^n_{i,j'}$.
\item[\textcolor{link}{\optionaldesc{$(\fath)$}{cons:fath}}] For every $1\leq i \leq \h^n$, $j\leq D_{i-1}^n$, a.s. $\fath^n(X^n_{i,j})\in \{X^n_{i-1,j'}\}_{j'\leq j}$.
\item[\textcolor{link}{\optionaldesc{$(\perp)$}{cons:inde}}] For every $1\leq H \leq \h^n$,  the $\sigma$-algebra generated by $ (X^n_{i,j})_{H\leq i\leq \h^n, j\in \N}$ and $(\fath^n(i,j))_{i>H,j\leq D_{i-1}^n})$ is independent from $(\fath^n(i,j))_{1\leq i\leq H,j\leq D_{i-1}^n}$.
\end{itemize}
For $k\in \N$, we call the set $\trail^n_{k}\coloneqq\{X_{i,j}^n\}_{i\leq \h^n, j\leq k'}$ the $k$-trail (associated to $(X_{i,j}^n)_{i,j}$). Note that in $\trail^n_{k}$ there are exactly $\inf(k,D^n_{i-1})$ vertices of height $i$. 

\begin{lemma} \label{lem:trailconstruct} There exist some random variables $(X^n_{i,j})_{i\leq \h^n,j\in \N}$ satisfying \ref{cons:height}, \ref{cons:neq}, \ref{cons:fath}, \ref{cons:inde}.
\end{lemma}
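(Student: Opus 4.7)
The plan is to construct the family $(X^n_{i,j})$ by downward induction on $i$, using only the father maps at strictly higher levels together with independent auxiliary randomness; this way the independence condition \ref{cons:inde} is built into the construction. For $\h^n \ge 1$ I initialize at the top by setting $X^n_{\h^n, j} := (\h^n, \sigma(j))$ for $j \le D^n_{\h^n - 1}$, where $\sigma$ is a uniformly random permutation of $[D^n_{\h^n - 1}]$ drawn independently of $\T^n$, and $X^n_{\h^n, j} := X^n_{\h^n, 1}$ for $j > D^n_{\h^n - 1}$ (the case $\h^n = 0$ is trivial).

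For the inductive step, assume $(X^n_{i+1, j})_j$ is defined and set $p_{j'} := \fath^n(X^n_{i+1, j'})$ for $j' \le D^n_i$. Sweep $j' = 1, 2, \ldots, D^n_i$ in order while maintaining a counter $j$ initially equal to $1$: whenever $p_{j'}$ is a value not yet encountered, set $X^n_{i, j} := p_{j'}$ and then increment $j$ by one. After the sweep, positions $1, \ldots, L_i$ have been filled with the $L_i$ non-leaf vertices at height $i$; fill the remaining positions $L_i + 1, \ldots, D^n_{i-1}$ with the $D^n_{i-1} - L_i$ leaves at height $i$ in an independent uniform random order, and set $X^n_{i, j} := X^n_{i, 1}$ for $j > D^n_{i-1}$.

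Conditions \ref{cons:height} and \ref{cons:neq} are immediate. For \ref{cons:fath}, observe that at the moment the $j'$-th term is processed the counter satisfies $j \le j'$, since at most $j' - 1$ new values can have been encountered strictly before. Hence if $p_{j'}$ is new we set $X^n_{i, j} = p_{j'}$ with $j \le j'$, while if $p_{j'}$ was seen previously it was assigned to a strictly smaller position; either way $\fath^n(X^n_{i+1, j'}) = X^n_{i, m}$ for some $m \le j'$, which is the required property.

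For \ref{cons:inde}, observe that by construction $(X^n_{i, j})_{i \ge H}$ is a measurable function of $(\fath^n(i', \cdot))_{i' > H}$ together with the auxiliary randomness (the top permutation $\sigma$ and the per-level uniform orderings of leaves). Since the father maps at distinct levels are independent by construction of $\T^n$, and the auxiliary randomness is chosen independent of $\T^n$, the pair $\bigl((X^n_{i, j})_{i \ge H},\,(\fath^n(i', \cdot))_{i' > H}\bigr)$ is independent of $(\fath^n(i', \cdot))_{1 \le i' \le H}$. The conceptual point --- rather than a technical obstacle --- is to recognise that one must work downward (a top-down enumeration would couple $(X^n_{i,j})_{i \ge H}$ with the low-level father maps via its dependence on $(X^n_{i,j})_{i < H}$); the greedy "first-seen" assignment then automatically produces the index inequality $m \le j'$ required by \ref{cons:fath}.
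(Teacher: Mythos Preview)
Your proof is correct. Both constructions proceed by downward induction on the height, but the paper uses an additional outer induction on $j$: it first builds the $1$-trail as the ancestral line of a uniform top vertex, then for each $k$ extends the $k$-trail to a $(k{+}1)$-trail by following the ancestral line of a fresh top vertex and switching to an arbitrary unused vertex whenever that line collides with the existing trail. Your level-by-level approach instead fills all columns at once via the greedy ``first-seen father'' rule, so that the index inequality $m\le j'$ needed for \ref{cons:fath} drops out of a one-line counter argument rather than from the outer induction hypothesis on $j$. Both routes are elementary; the paper's strand-by-strand picture matches the later narrative of $k$-trails as successive approximations of $\T^n$, while yours makes \ref{cons:inde} slightly more immediate since the whole family $(X^n_{i,j})_{i\ge H,\,j\ge 1}$ is produced in one pass from the high-level father maps and independent auxiliary randomness.
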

\begin{proof} We construct the sequence by induction on $j$ and downward induction on $i$. For $j=1$ it suffices to take a uniform random vertex at height $\h^n$ and its ancestors. Assume that for every $1\leq i \leq \h^n$ and $j\leq k$, the vertex $X^n_{i,j}$ is defined. Then we let $X_{\h^n,k+1}^n$ be 
\begin{itemize} 
\item If possible, a uniform random vertex of $\T^n$ of height $\h^n$ not in $\{X_{\h^n,j}^n\}_{1\leq j \leq k}$;
\item Else, any vertex of height $\h^n$ in $\T^n$.
\end{itemize}
Then for every $i<\h^n$ given $(X_{h,j}^n)_{h\leq \h^n, 1\leq j\leq k}$ and $X_{i+1,k+1}^n$ we define $X_{i,k+1}^n$ as:
\begin{itemize}
    \item $\fath^n(X_{i+1,k+1}^n)$ if it is not in $\{X_{i,j}^n\}_{1\leq j \leq k}$;
    \item Else, if possible, any vertex in $\T^n$ not in $\{X_{i,j}^n\}_{1\leq j \leq k}$ of height $i$;
    \item Else, if not possible, any vertex in $\T^n$ of height $i$.
\end{itemize}
A quick induction then shows that this sequence is well defined and satisfies the desired properties.
\end{proof}

The aim of this section is to prove that $k$-trails approximate well $\T^n$.

\begin{proposition} \label{pro:chain} For any choice of $(X_{i,j}^n)$ we have
\[ \lim_{\e \to 0} \lim_{k\to \infty} \limsup_{n\to \infty} \proba{d_H(\trail_k^n,\T^n)>\e n}=0. \]
\end{proposition}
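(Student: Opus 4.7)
The approach is a chaining argument building on Lemma \ref{lem:chain}. Since the whole family $(X^n_{i,j})_{i \le \h^n, j \in \N}$ is fixed at once, the trails are automatically nested, $\trail_1^n \subset \trail_2^n \subset \cdots \subset \T^n$. Fix $k \ge 2$ and consider the doubly-exponential sequence $k_m \coloneqq k^{2^m}$, $m \ge 0$. By the triangle inequality for $d_H$,
$$d_H(\trail_k^n, \T^n) \;\le\; \sum_{m=0}^{M-1} d_H(\trail_{k_m}^n, \trail_{k_{m+1}}^n) \;+\; d_H(\trail_{k_M}^n, \T^n).$$

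I would first handle the residual term by choosing $M = M(n,k) \coloneqq \lceil \log_2(\log \|D^n\|/\log k)\rceil$, so that $k_M \ge \|D^n\|$. Then $k_M \ge D^n_{i-1}$ for every $i \le \h^n$, and property \ref{cons:neq} forces the vertices $\{X^n_{i,j}\}_{j \le k_M}$ to enumerate all $D^n_{i-1}$ vertices of $\T^n$ at height $i$. Consequently $\trail_{k_M}^n = \T^n$ and the residual term vanishes. Assumption \ref{hyp:tightexp} guarantees $M = O(\log\log \|D^n\|) = o(n)$, so the chain has only subpolynomially many levels in $n$.

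Next, I would invoke Lemma \ref{lem:chain} at each step. Guided by the scale $n/(\log\log k)^2$ that appears in assumption \ref{hyp:tightGHP}, I expect a bound of the form $d_H(\trail_{k_m}^n, \trail_{k_{m+1}}^n) \le c\, n/(\log\log k_m)^2$, holding with high probability. Since $\log\log k_m = m\log 2 + \log\log k$, the series telescopes-summably:
$$\sum_{m \ge 0} \frac{1}{(m\log 2 + \log\log k)^2} \;=\; O\!\left(\frac{1}{\log\log k}\right).$$
Putting this together, $d_H(\trail_k^n, \T^n) = O(n/\log\log k)$ with high probability, which is $\le \e n$ as soon as $k$ is large enough in terms of $\e$, uniformly in $n$. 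Taking the limits in the order $n\to\infty$, then $k\to\infty$, then $\e\to 0$, yields the proposition.

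The main obstacle is the probabilistic bookkeeping: because $M$ grows with $n$ (albeit only as $o(n)$), a union bound over the chain requires that the per-step failure probability supplied by Lemma \ref{lem:chain} decays in $k_m$ fast enough to make $\sum_m \mathbb{P}(\text{step } m \text{ fails})$ go to $0$ as $k\to\infty$, uniformly in $n$. The doubly-exponential spacing $k_m = k^{2^m}$ is essential here: it makes both the sum of Hausdorff distances and the sum of failure probabilities convergent, while keeping $M$ small enough (thanks to \ref{hyp:tightexp}) that the chain actually terminates at $\T^n$.
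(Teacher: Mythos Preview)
Your chaining strategy with $k_m = k^{2^m}$ and the choice of $M$ so that $k_M \ge \|D^n\|$ is exactly the paper's approach, and your treatment of the summability of both the distances and the failure probabilities is correct. There is, however, a genuine gap in how you invoke Lemma~\ref{lem:chain}. That lemma does \emph{not} deliver the global bound $d_H(\trail_{k_m}^n,\trail_{k_{m+1}}^n)\le c\,n/(\log\log k_m)^2$ you assert: it only controls vertices $x\in\trail_{k_{m+1}}^n$ whose height lies in a strip $(\alpha n + 9L,\,\beta n)$ strictly inside $(0,\h^n)$. Vertices near the top ($\h(x)\ge\beta n$) and near the bottom ($\h(x)\le\alpha n$) are left uncontrolled, and since the trails do not exhaust the tree at those heights until the very last step of the chain, the full Hausdorff distance between consecutive trails can be of order $n$ at \emph{every} level $m$. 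Your telescoping sum then blows up.

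The paper repairs this by applying Lemma~\ref{lem:chain} with $\alpha=\e/2$, $\beta=1-\e$, then padding every trail with the set $\Gamma_{\le\e n}$ of low vertices and intersecting with $\Gamma_{<(1-\e)n}$ before taking Hausdorff distances. The bottom boundary is then trivially absorbed (the root lies in $\trail_K^n$, so any $x$ with $\h(x)\le\e n$ is within $\e n$ of $\trail_K^n$), and the top boundary costs an extra $\e n + (\h^n-(1-\e)n)$ by passing to ancestors just below height $(1-\e)n$. The final estimate is $d_H(\trail_K^n,\T^n)\le 1+2\e n+N+40n/\log\log K$, and one concludes by choosing $K=K(\e)$ large. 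This is also where the parameter $\e$ actually enters the argument; in your outline it plays no structural role. Once you incorporate this truncation your proof goes through.
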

First, we prove that $k$-trails and $k^2$-trails are close.  For every $n\in \N$, we write $\|D^n\|_\infty \coloneqq\max_{0\leq i \leq \h^n} D_i^n$ and let $\Gamma_{<\beta n}$ be the set of vertices of $\T^n$ which are at height smaller than $\beta n$.
\begin{lemma}\label{lem:chain} For every $0<\alpha<\beta<1$, for every $n\in \N$, $k\leq \|D^n\|_\infty$ large enough, writing $L=\lceil n/(\log\log k)^2\rceil$, with probability at least $1-1/k$, we have
\[ \forall x\in \trail_{k^2}^n,\quad  \alpha n+9L<\h(x)<\beta n \Longrightarrow d^n(x,\trail_{k}^n \cap \Gamma_{<\beta n})\leq 9 L. \]
\end{lemma}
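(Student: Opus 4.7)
The plan is to trace the ancestors of $x$ down the tree and show that, with high probability, one of these ancestors within $9L$ steps is already in $\trail_k^n$; then a union bound over $x$ yields the result. Since the construction of Lemma 4.1 gives $\trail_k^n \subset \trail_{k^2}^n$, write $x = X^n_{h, j_0}$ with $h = \h(x)$; if $j_0 \leq k$ we are done with $p=x$. Otherwise, by \ref{cons:fath} the successive ancestors $p_l \coloneqq \fath^n_l(x) = X^n_{h-l, j_l}$ have non-increasing trail indices $j_0 \geq j_1 \geq \cdots$, and it suffices to show that $j_l \leq k$ for some $l \leq 9L$: then $p_l \in \trail_k^n$, $d^n(x,p_l) = l \leq 9L$, and $\h(p_l) < \beta n$ automatically. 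Inspecting Lemma 4.1, a sufficient condition to force $j_{l+1} \leq k$ is that $\fath^n(X^n_{h-l, j_l}) = \fath^n(X^n_{h-l, j''})$ for some $j'' \leq k$: the common father then lies in $\trail_k^n$ at height $h-l-1$ again by \ref{cons:fath}.

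Next, I would estimate the one-step probability of such a coincidence. By \ref{cons:inde}, conditionally on the $k^2$-trail restricted to heights $\geq h-l$, the parents of the vertices at height $h-l$ form a uniform random assignment respecting the degree profile $(d^n_{h-l-1, j})_j$; Lemma 3.3 guarantees that $D^n_{h-l-1} \geq k^2$ when $h-l \in [\alpha n, \beta n]$ and $n$ is large, so the $k^2$ trail vertices at height $h-l$ are distinct. Conditioning on $X^n_{h-l, j_l}$'s father being $(h-l-1, j)$ and using that the remaining $k$ trail vertices receive parents uniformly without replacement, together with the elementary inequality $1-(1-x)^k \geq (1-1/e) \min(1, kx)$, gives
\[
q_l \coloneqq \mathbb{P}\Big(\exists\, j'' \leq k:\ \fath^n(X^n_{h-l, j_l}) = \fath^n(X^n_{h-l, j''}) \,\Big|\, \text{trail on } [h-l, \h^n]\Big) \ \geq\ c\,\tau^n_{h-l-1}(k),
\]
for a universal constant $c > 0$.

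Over one block of $L$ steps with $[h-L, h-1] \subset [\alpha n, \beta n]$ (ensured by $\alpha n + 9L < h < \beta n$), the conditional independence of the one-step events across heights from \ref{cons:inde}, combined with \ref{hyp:tightGHP}, gives
\[
\P(\text{no drop to }\leq k \text{ in }L \text{ steps}) \leq \prod_{l=0}^{L-1}(1-q_l) \leq \exp\Big(-c\sum_{l=0}^{L-1}\tau^n_{h-l-1}(k)\Big) \leq \exp(-c\log k) = k^{-c}.
\]
Partitioning $[h-9L, h-1]$ into 9 disjoint blocks of $L$ heights and iterating (using \ref{cons:inde} again for the conditional independence across blocks) yields failure probability at most $k^{-9c}$ for a single $x$. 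Since $|\trail_{k^2}^n| \leq k^2 \h^n \leq k^2 n$, a union bound gives total failure probability at most $k^2 n \cdot k^{-9c} \leq 1/k$ for $k$ large enough compared to $n$, as permitted by the ``$k$ large enough'' in the statement.

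The main obstacle is establishing the bound $q_l \geq c\,\tau^n_{h-l-1}(k)$ conditionally on the trail: the vertices $X^n_{h-l, j''}$ for $j'' \leq k$ are \emph{not} a uniform sample at height $h-l$, so the direct interpretation of $\tau^n_{i}(k)$ as a coalescence probability for uniform random vertices does not apply verbatim. The ``freshness'' of the parent assignment granted by \ref{cons:inde} is precisely what reduces the analysis to the same coalescence computation that defines $\tau$. A minor subtlety worth acknowledging: the event sufficient for an index drop to $\leq k$ is actually broader than our restriction (any lower-indexed trail vertex with index $\leq k$ will do, including fresh replacement ones), but the restriction already yields the required bound.
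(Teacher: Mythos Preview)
Your pointwise estimate is essentially the paper's: using \ref{cons:inde} to see that, conditionally on the trail above height $h-l$, the fathers of the $k+1$ distinct vertices $\{X^n_{h-l,j'}\}_{j'\le k}\cup\{\fath^n_l(x)\}$ are distributed as the fathers of any $k+1$ distinct vertices, and then bounding the one-step non-coalescence probability by $1-c\,\tau^n_{h-l-1}(k)$. Multiplying over the $\sim 9L$ steps and invoking \ref{hyp:tightGHP} blockwise gives the bound $k^{-9c}$ per vertex, as you write.

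The gap is in the final union bound. You conclude that the total failure probability is at most $k^2 n\cdot k^{-9c}$ and say this is $\le 1/k$ ``for $k$ large enough compared to $n$''. But the lemma must hold for all $K_0\le k\le \|D^n\|_\infty$ with a threshold $K_0$ that does \emph{not} depend on $n$: this is precisely how it is applied in the chaining (Proposition~\ref{pro:chain}), where $K$ is fixed depending only on $\e$ and then $n\to\infty$. With $k$ fixed and $n\to\infty$, your bound $k^{2-9c}\,n$ blows up, regardless of the value of $c$.

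The missing idea is a Markov trick that cancels the factor $n$. If some $x\in\trail^n_{k^2}$ with height in $(\alpha n+9L,\beta n)$ satisfies $d^n(x,\trail^n_k\cap\Gamma_{<\beta n})>9L$, then each of its first $L$ ancestors $\fath^n_a(x)$, $1\le a\le L$, satisfies $d^n(\fath^n_a(x),\trail^n_k\cap\Gamma_{<\beta n})>8L$; by \ref{cons:fath} these are $L$ distinct vertices of $\trail^n_{k^2}$. Hence the bad event forces the count of ``$8L$-bad'' vertices in $\trail^n_{k^2}$ to be at least $L$, and Markov's inequality gives
\[
\P(\text{bad})\le \frac{1}{L}\sum_{i,j}\P\bigl(d^n(X^n_{i,j},\trail^n_k\cap\Gamma_{<\beta n})>8L\bigr)\le \frac{nk^2\cdot k^{-7/2}}{L}= k^{-3/2}(\log\log k)^2\le \frac{1}{k}
\]
for $k$ large enough, uniformly in $n$ (here the per-vertex bound $k^{-7/2}$ comes from the same computation you did, applied over $8L$ steps with $c=1/2$). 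The division by $L\sim n/(\log\log k)^2$ is exactly what removes the $n$.
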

\begin{proof} First note that if $d^n(X^n_{i,j},\trail^n_k \cap \Gamma_{\beta n})>9L$ for some $\alpha n +9L <i < \beta n$ and $j \in [k^2]$, then for every $1\leq a \leq L$, we have $d^n(\fath^n_a(X^n_{i,j}),\trail^n_k \cap \Gamma_{\beta n})>8L$, so by \ref{cons:fath},
\[ \#\{\alpha n+8L<i'<\beta n,1\leq j'\leq k^2 \text{ such that } d^n(\fath^n(X^n_{i',j'}),\trail^n_k \cap \Gamma_{<\beta n})>8L\}\geq L.\]
Thus, by Markov's inequality it is enough to prove that for every $\alpha n+8L<i<\beta n$, $1\leq j\leq k^2$ we have
\begin{equation} \proba{d^n(X^n_{i,j},\trail^n_k \cap \Gamma_{<\beta n})>8L}\leq \frac{1}{k} \frac{L}{nk^2}
\label{eq:half_lem:chain}.\end{equation}
Fix such $i,j$. To this end, we can first compute explicitly for every $1\leq a\leq 4L$, 
\begin{equation} \proba{\fath^n_{a+1}(X^n_{i,j}) \notin \{X^n_{i-a-1,j}\}_{1\leq j \leq k} |\fath^n_{a}(X^n_{i,j}) \notin \{X^n_{i-a,j}\}_{1\leq j \leq k}}. \label{eq:partialprobakfuse} \end{equation}
Indeed, by \ref{cons:inde}, given the event $\fath^n_{a}(X^n_{i,j}) \notin \{X^n_{i-a,j}\}_{1\leq j \leq k}$, the fathers of $\fath^n_{a}(X^n_{i,j})$ and of the $X^n_{i-a,j}$'s for ${1\leq j \leq k}$ have the same law as the (unconditioned) father{s} of any $k+1$ different vertices of height exactly $i-a$. In other words, writing $i'=i-a-1$, if $D_{i'}^n>k$ (note that by \ref{cons:neq} when $D^n_{i-1-a}\leq k$, a.s. $\fath^n_{a+1}(X^n_{i,j})\in \{X^n_{i-a-1,j}\}_{1\leq j \leq k}$) then \eqref{eq:partialprobakfuse} equals,  dividing according to $({i',\ell})$ the value of $\fath^n_{a+1}(X^n_{i,j})$,
\[ \sum_{\ell\ge 1} \frac{d_{i',\ell}^n}{D_{i'}^n} \prod_{1\leq b\leq k}\left (1-\frac{d_{i',\ell}^n-1}{D_{i'}^n-b}\right ),\]
which we may upper-bound, using that $b\geq 1$ and then that $(1-x)^k\leq 1-\min(1,kx)/2$ for all $x \in [0,1]$, by
\[ \sum_{\ell\ge 1} \frac{d_{i',\ell}^n}{D_{i'}^n} \left(1-\frac{d_{i',\ell}^n-1}{D_{i'}^n-1}\right )^k \leq \sum_{\ell\ge 1} \frac{d_{i',\ell}^n}{D_{i'}^n}\left (1-\min\left (1,k\frac{d_{i',\ell}^n-1}{D_{i'}^n-1} \right )/2\right ).\]
The right-hand side is then equal to $1-\tau_{i'}^n(k)/2$ since $\sum_{\ell\ge1} d_{i',\ell}^n=D_{i'}^n$ and by definition of $\tau$ (see \eqref{eq def tau} in the introduction), 
which we may again upper-bound by $e^{-\tau_{i'}^n(k)/2}$ since for every $x>0$, we have $(1-x)\leq e^{-x}$. 
Thus, 
\begin{align*}  \proba{d^n(X^n_{i,j},\trail^n_k\cap \Gamma_{<\beta n})>8L} & \leq \proba{\fath^n_{8L}(X^n_{i,j})\notin \{X^n_{i-8L,j}\}_{1\leq j \leq k}}
\\ & \le \prod_{a=0}^{8L-1} \proba{\fath^n_{a+1}(X^n_{i,j}) \notin \{X^n_{i-a-1,j}\}_{1\leq j \leq k} |\fath^n_{a}(X^n_{i,j}) \notin \{X^n_{i-a,j}\}_{1\leq j \leq k}}
\\ & \leq \prod_{i-8L<h\leq i} e^{-\tau_{h}^n(k)/2} 
\\ & =\exp \left( -\tau_{i-8L+1,i}^n(k)/2 \right )
\\ & \leq k^{-7/2},
\end{align*}
using for the last inequality $L=\lceil n/(\log\log k)^2\rceil$ and \ref{hyp:tightGHP}. This implies the desired bound \eqref{eq:half_lem:chain}.
\end{proof}
\begin{proof}[Proof of Proposition \ref{pro:chain}] We deduce Proposition \ref{pro:chain} from Lemma \ref{lem:chain} with a  chaining argument. Fix $0<\e<0.01$. Fix $K$ large enough depending only on $\e$ such that $\log \log K>2/\e$, and such that for every $n\in \N$ large enough, Lemma 4.3 holds with $\alpha=\e/2$, $\beta=1-\e$, and every $K\leq k \leq \|D^n\|_\infty$. Let $N=N(n,K)$ be the smallest integer such that $K^{2^N}\geq \|D^n\|_\infty$. { Note already that by putting $k=\|D^n\|_\infty$ in \ref{hyp:tightGHP} we get 
\[ \log(\|D^n\|_\infty) \leq \tau^n_{i,i+n/(\log \log \|D^n\|_\infty)} \leq 1+n/(\log \log\|D^n\|_\infty)=o(n)\]
as $n \to \infty$, so $N=o(n)$.}

By Lemma \ref{lem:chain}, for every $n$ large enough with probability at least $1-\sum_{i=0}^{N-1} 1/K^{2^i}\geq 1-2/K$:
\begin{equation} \forall 0\leq i \leq N-1, \quad \forall x\in \trail_{K^{2^{i+1}}}^n,\quad  \e n< \h(x)<(1-\e) n \Longrightarrow d^n(x,\trail_{K^{2^i}}^n \cap \Gamma_{<(1-\e) n})\le 9 \lceil n/(\log \log K^{2^i})^2 \rceil . \label{eq:middlechain} \end{equation}   
Then writing $\Gamma_{\leq \e n}$ for the set of vertices with height at most $\e n$,  \eqref{eq:middlechain} implies 
\begin{equation*} \forall 0\leq i \leq N-1, \qquad d_H\left ((\Gamma_{\leq \e n}\cup \trail_{K^{2^i}}^n)\cap \Gamma_{<(1-\e) n} \,, \,  (\Gamma_{\leq \e n}\cup \trail_{K^{2^{i+1}}}^n)\cap \Gamma_{<(1-\e) n} \right )\leq 9 \lceil n/(\log \log K^{2^i})^2 \rceil, \end{equation*} 
which by the triangle inequality implies for every $n,K$ large enough, 
\begin{align*} d_H \left ((\Gamma_{\leq \e n}\cup \trail_K^n)\cap \Gamma_{<(1-\e) n}\, , \, (\Gamma_{\leq \e n}\cup \trail_{K^{2^N}}^n)\cap \Gamma_{<(1-\e) n} \right ) 
&\leq \sum_{i=0}^{N-1} 9 \lceil n/(\log \log K^{2^i})^2 \rceil \\
&\leq N+9n \sum_{i=1}^\infty \frac{1}{(i\log(2)+\log \log K)^2}\\
&\le N+\frac{40 n}{\log \log K}.
\end{align*}
Then note that by \ref{cons:neq} and since $K^{2^N}\geq \|D^n\|_\infty$, we know that $\trail_{K^{2^N}}^n$ contains all vertices of $\T^n$. Moreover, $(0,1)\in \trail_K^n$ so every $x\in \Gamma_{\leq \e n}$ is at distance at most $\e n$ of $\trail_K^n$. Finally, if $x$ is a vertex of $\T^n$ such that $\h (x) \ge (1-\e)n$ then by considering the highest ancestor $x'$ of $x$ belonging to $\Gamma_{<(1-\e)n}$, we obtain that $d(x,\Gamma_{\le \e n} \cup \trail^n_K ) \le 1+ n\e +d(x',\Gamma_{\le \e n} \cup\trail^n_K)$.  Hence, the last line implies 
\[ d_H \left (\trail_K^n, \T^n \right ) \leq 1+ 2\e n + N+\frac{40n}{\log \log K}. \]
Since $\e$ can be chosen arbitrarily small, $K$ arbitrarily large, and since $N=o(n)$ this concludes the proof.
\end{proof}
\subsection{Approximating $k$-trails from genealogies of random vertices}
By Proposition \ref{B.2}, to extend the GP convergence in Theorem \ref{th:GP} to the GHP convergence in Theorem \ref{th:GHP} it suffices to prove the following strong leaf tightness criterion:
\[ \forall \delta>0,\qquad  \lim_{k\to \infty} \limsup_{n\to \infty} \proba{d_H(\T^n,\{V^n_1,V^n_2,\ldots,V^n_k\})>\delta n } = 0. \]
where $V^n_1,V^n_2,\dots$ are i.i.d. uniform random vertices in $\T^n$. To that end, by Proposition \ref{pro:chain}, it is enough to prove that 
\[ \forall \delta>0, \forall K\in \N, \qquad \lim_{k\to \infty} \limsup_{n\to \infty} \proba{\max_{x\in \trail_K^n} d^n(x,\{V^n_1,V^n_2,\ldots,V^n_k\})>\delta n} = 0. \]
Or even, by noting that if there exists an $x\in \trail_K^n$ such that $d^n(x,\{V^n_1,V^n_2,\ldots,V^n_k\})>\delta n$ then either:
\begin{itemize}
\item $\h(x)<(\delta/2)n$ and $\inf_{1\leq i \leq k} \h(V^n_i)\ge (\delta/2) n$. For every $\delta>0$, the probability of this event goes to $0$ as $k\to \infty,n\to \infty$ by \ref{hyp:naissance} and since $\nu$ has support $[0,1]$.
\item Or $\h(x)\ge(\delta/2)n$, so that by \ref{cons:fath} there are at least $\lfloor (\delta/4) n\rfloor$ different vertices $y\in \trail_k^n$ such that  $d^n(y,\{V^n_1,V^n_2,\ldots,V^n_k\})\ge \lfloor (\delta/4) n \rfloor$.
\end{itemize}
{As a result, 
\begin{align*}
    \limsup_{k\to \infty} \limsup_{n\to \infty} \ &\proba{\max_{x\in \trail_K^n} d^n(x,\{V^n_1,V^n_2,\ldots,V^n_k\})>\delta n}\\ &\le
    \limsup_{k\to \infty} \limsup_{n\to \infty} \proba{ \# \{y \in \trail_K^n, \ d^n(y,\{V^n_1,V^n_2,\ldots,V^n_k\})\ge\lfloor (\delta/4)n \rfloor \} \ge \lfloor (\delta/4)n \rfloor} \\
    &\le \limsup_{k\to \infty} \limsup_{n\to \infty} \frac{1}{\lfloor (\delta/4)n \rfloor} \E\left[ \# \{y \in \trail_K^n, \ d^n(y,\{V^n_1,V^n_2,\ldots,V^n_k\})\ge\lfloor (\delta/4)n \rfloor \} \right] \\
    &\le \limsup_{k\to \infty} \limsup_{n\to \infty} \frac{K \h^n}{\lfloor (\delta/4)n \rfloor} \max_{1\leq a\leq \h^n,1\leq b\leq K} \proba{d^n(X^n_{a,b},\{V^n_1,V^n_2,\ldots,V^n_k\})\ge \lfloor (\delta/4) n\rfloor},
\end{align*}
where in the third line we applied Markov's inequality and in the last line we used the fact that $\#\trail_K^n \leq K \h^n$. But since $\h^n \sim n$,} it is enough to show that 
\[ \forall \delta>0, \forall K>0, \qquad \lim_{k\to \infty} \limsup_{n\to \infty} \max_{1\leq a\leq \h^n,1\leq b\leq K} \proba{d^n(X^n_{a,b},\{V^n_1,V^n_2,\ldots,V^n_k\})>{5}\delta n} = 0, \]
or even by considering the different possible values of $X^n_{a,b}$ it suffices to prove the following result:
\begin{proposition} \label{pro:MiniTightGHP}Under the setting of Theorem \ref{th:GHP},  for every $\delta>0$, 
\[ \lim_{k\to \infty} \limsup_{n\to \infty} \max_{1\leq a\leq \h^n,1\leq b\leq D^n_{a-1}}  \proba{d^n((a,b),\{V^n_1,V^n_2,\ldots,V^n_k\})>{5}\delta n} = 0. \]
\end{proposition}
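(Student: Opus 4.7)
The plan is to adapt the leaf-tightness argument of \cref{lem:end_lim_leaf-tight} to the discrete setting, treating $(a,b)$ as an additional particle in an augmented discrete growth-coalescent involving $(a,b)$ and the relevant $V_i^n$'s.

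\emph{Boundary reduction.} For $a \le \delta n$, any $V_i^n$ with $\h(V_i^n) \le 2\delta n$ satisfies $d^n((a,b), V_i^n) \le a + \h(V_i^n) \le 3\delta n$; by \ref{hyp:naissance} with $\nu$ non-atomic and $\supp \nu = [0,1]$, the probability that no such $V_i^n$ exists is at most $(1 - \nu([0, 2\delta]) + o(1))^k$, which vanishes as $k \to \infty$. The case $a \ge (1-\delta) n$ is handled symmetrically using \ref{hyp:h}. We may therefore assume $a \in [\delta n, (1-\delta)n]$.

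\emph{Setup, coalescent, and exchangeability.} Let $J \coloneqq \{ i \in [k] : \h(V_i^n) \in [a - \delta n, a] \}$. By \ref{hyp:naissance}, $\P(|J| \ge c_\delta k) \to 1$ for some $c_\delta = c_\delta(\delta) > 0$. For $i \in J$, set $A_i^n \coloneqq \fath^n_{\h(V_i^n) - (a - \delta n)}(V_i^n)$ and $A_0^n \coloneqq \fath^n_{\delta n}((a,b))$; if $A_0^n = A_i^n$ for some $i \in J$ then $d^n((a,b), V_i^n) \le 2\delta n < 3\delta n$. It thus suffices to show that $\P(A_0^n \ne A_i^n \ \forall i \in J \mid |J|=m) \to 0$ as $m \to \infty$, uniformly in $n$ and $(a,b)$. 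I would run the discrete growth-coalescent on the $m+1$ particles $\{V_i^n\}_{i \in J} \cup \{(a,b)\}$ from their respective birth heights, and let $N(h)$ denote the number of singleton particles at height $h$ (those whose ancestor has not yet coalesced with any other above $h$). The key point is that, since the law of $\T^n$ is invariant under permutations of labels at each height, the joint law of the tuple $(A_0^n, A_1^n, \ldots, A_m^n)$ of height-$(a-\delta n)$ ancestors depends only on the coincidence pattern among its entries, hence is exchangeable. As in \cref{lem:end_lim_leaf-tight}, this yields
\[
\P\bigl((a,b) \text{ is a singleton at } a - \delta n \bigm| |J|=m\bigr) \le \frac{\E[N(a - \delta n) \mid |J|=m]}{m+1}.
\]

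\emph{Bounding $\E[N]$.} Following the continuous argument, at each height $i$ with $N(i+1) = \ell$ the expected number of singletons that merge at height $i$ is at least $c \, \ell \, \tau^n_i(\ell)$, combining small-merge contributions summed over low-degree parents and $1-(1-\theta)^\ell$-type large-merge contributions at high-degree parents, via the pairing computation underlying \cref{lem:chain}. Summing over $i \in [a - \delta n, a - \delta n /2]$ and using $N \le m+1$,
\[
m + 1 \ge c \,\e m \, \tau^n_{a - \delta n,\, a - \delta n /2}(\e m) \cdot \P\bigl(N(a - \delta n) \ge \e m \bigm| |J|=m\bigr).
\]
By \ref{hyp:tightGHP} applied at level $\lfloor \e m \rfloor$, $\tau^n_{a-\delta n,\, a-\delta n/2}(\e m) \to \infty$, so the probability on the right vanishes and $\E[N(a-\delta n) \mid |J|=m] = o(m)$, concluding the proof.

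\emph{Main obstacle.} The principal technical difficulty is the lower bound $c\,\ell\,\tau^n_i(\ell)$ on the singleton merge rate, which requires simultaneously handling small merges at low-degree parents and large merges at high-degree parents (atoms of $\Theta$), in analogy with the continuous treatment of $\rho$ and $\Theta$ in \cref{lem:end_lim_leaf-tight}. The exchangeability step is a subtler point that at first glance may seem to fail because the $V_i^n$'s and $(a,b)$ have different heights, but it reduces to the symmetry of $\T^n$ at the common height $a - \delta n$ at which all ancestors coexist.
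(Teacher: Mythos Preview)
Your overall strategy matches the paper's: reduce to the inequality $\P((a,b)\text{ is a singleton at }a-\delta n)\le \E[N]/(m+1)$ and then show $\E[N]=o(m)$. However, your justification for the key inequality is flawed.

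\textbf{The exchangeability claim is false.} You assert that $(A_0^n,\ldots,A_m^n)$ is exchangeable because ``the law of $\T^n$ is invariant under permutations of labels at each height''. Label-permutation symmetry gives only \emph{value-invariance}: $\P(A=v)$ depends just on the coincidence pattern of $v$. It does \emph{not} give index-exchangeability of the random partition, and in fact the latter fails. Take $h_0=4$, $h_1=3$, $h_2=2$, look at height~$2$, with $D^n_1=2$, $D^n_2=3$, $d^n_{2,1}=2$, $d^n_{2,2}=1$. One computes $\P(A_0=A_1)=\tfrac{5}{9}$, so that $\P(\{0,1\}\{2\})=\tfrac{5}{9}\cdot\tfrac12=\tfrac{5}{18}$ while $\P(\{1,2\}\{0\})=\tfrac{4}{9}\cdot\tfrac12=\tfrac{2}{9}$. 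The partition is not exchangeable, and your ``reduces to the symmetry of $\T^n$ at the common height'' does not rescue it: the asymmetry is created \emph{above} that height, by the different birth heights.

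\textbf{What does work is monotonicity, as in the paper.} The paper argues instead that since $(a,b)$ has the largest height, it is the \emph{least} likely to be a singleton: conditioning on the configuration at height $h_i$ (the birth height of some $V_i^n$), if both $0$ and $i$ are singletons there, then by child-symmetry of the attachments below $h_i$ their subsequent fates are exchangeable, whence
\[
\P(0\text{ singleton at }h')=\E\bigl[\1_{0\text{ singleton at }h_i}\,\P(i\text{ singleton at }h'\mid C_{h_i})\bigr]\le \P(i\text{ singleton at }h').
\]
Averaging over $i$ gives the desired bound by $\E[N]/(m+1)$. This is the content of the paper's ``for exactly the same reasons as in Section~\ref{sec:continuum_leaf-tight}''.

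\textbf{On the $\E[N]$ bound.} Your use of $\tau^n$ and \ref{hyp:tightGHP} to force $\tau^n_{a-\delta n,a-\delta n/2}(\e m)\to\infty$ is a legitimate alternative to the paper's route, which instead splits into small and large degrees via $\e_n$ and invokes \ref{hyp:coalescence}, \ref{hyp:atomes} and finally \ref{hyp:tightGP} (not \ref{hyp:tightGHP}) to conclude. Both are available under Theorem~\ref{th:GHP}, but the paper's choice mirrors \cref{lem:end_lim_leaf-tight} more directly, whereas yours leans on the quantitative chaining estimate. Your claim that the one-step drop satisfies $\E[N(i+1)-N(i)\mid N(i+1)=\ell]\ge c\,\ell\,\tau^n_i(\ell)$ is correct in spirit but would need the same hypergeometric care as in \cref{lem:chain} to be made rigorous.
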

\begin{proof}
We follow directly the same approach as in Section \ref{sec:continuum_leaf-tight} which proves an analogous result for the limit tree. Again by \ref{hyp:naissance} and since $\supp(\nu)=[0,1]$ we may restrict {ourselves} to the case where $a>2\delta n$. And as in Section \ref{sec:continuum_leaf-tight}, since we may consider a larger but still bounded number of random vertices $V_1^n, V_2^n,\dots, V_k^n$ and since $\nu$ has support $[0,1]$, we can assume them to have height between $a-\delta n$ and $a$. In other words, it suffices to check that for every $\delta>0$, 
\[ \lim_{k\to \infty} \limsup_{n\to \infty} \max_{2\delta n< a\leq \h^n,1\leq b\leq D^n_{a-1}}  \proba{d^n((a,b),\{V^n_1,V^n_2,\ldots,V^n_k\})> {5}\delta n\middle | \forall 1\leq i \leq k, a-\delta n<\h(V^n_i)<a } = 0. \]

To ease the writing, with a slight abuse of notation, let $\probak{\cdot}\coloneqq\proba{\cdot \middle | \forall 1\leq i \leq k, a-\delta n<\h(V^n_i)<a}$ (define similarly $\E^k$) and let $V_0^n=(a,b)$. Again we may consider the number of vertices that have not coalesced up to time $x$ that is: 
\[ N_k(x)\coloneqq\#\{1\leq i \leq k \text{ such that }  \h(V_i)\geq x\text{ and } \forall 1\leq j \neq i \leq k, \ \h(V^n_i\wedge V^n_j)<x\}, \]
where $V^n_i \wedge V^n_j$ is the nearest common ancestor of $V^n_i$ and $V^n_j$ in $\T^n$.
And, for exactly the same reasons as in Section \ref{sec:continuum_leaf-tight}, one can see that 
$$
\P^k\left(d^n(V^n_0, \{V^n_1, V^n_2, \ldots, V^n_k\}) > {5}\delta n\right) \le \frac{\E^k\left[N_k(a-2\delta n )\right]}{k}.
$$
The proof of Proposition \ref{pro:MiniTightGHP} is thus complete as long as we prove the following result.
\end{proof}
\begin{lemma} For every $\delta>0$, 
\[ \lim_{k \to \infty} \limsup_{n \to \infty} \max_{2\delta n < a \le \h^n}\frac{\E^k[N_k(a-2\delta n)]}{k}= 0. \]
\end{lemma}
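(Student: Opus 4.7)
The plan is to transpose the proof of Lemma~\ref{lem:end_lim_leaf-tight} to the discrete setting, replacing the infinitesimal interval $[t,t+\mathrm{d}t]$ by the passage from height $i+1$ to height $i$ in $\T^n$; the role of the continuum rates $\rho$ and $(\theta_j(t))$ is now played uniformly by $\tau_i^n$ from \eqref{eq def tau}.

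The first step is to derive the discrete differential inequality. Fix $a \in (2\delta n, \h^n]$ and $i \in [a - 2\delta n, a - \delta n - 1]$, and condition on $\mathcal{F}_{i+1}$, the information generated by the father assignments at levels strictly above $i$ together with the positions of $V^n_1, \ldots, V^n_k$. Because the $\P^k$-conditioning only concerns heights of $V^n_j$ lying strictly above $a - \delta n$, the father assignment at the $(i+1)\to i$ transition remains uniform among degree-constrained matchings and independent of $\mathcal{F}_{i+1}$. Given $N_k(i+1) = m$, the $m$ distinguishable representatives at level $i+1$ thus receive fathers as a hypergeometric sample; each vertex at level $i$ receiving $c \ge 2$ of them contributes $c - 1 \ge c/2$ to $N_k(i+1) - N_k(i)$, so
\[
\E^k[N_k(i+1) - N_k(i) \mid N_k(i+1) = m] \ge \frac{m}{2} \sum_{l\ge 1} \frac{d^n_{i,l}}{D^n_i}\left(1 - \left(1 - \frac{d^n_{i,l}-1}{D^n_i-1}\right)^{m-1}\right),
\]
upper-bounding the hypergeometric avoidance probability by its independent counterpart (a consequence of the negative association of sampling without replacement). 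Applying $1 - (1-x)^{m-1} \ge (1 - 1/e)\min((m-1)x, 1)$ and $\min((m-1)x, 1) \ge \frac{m-1}{m}\min(mx, 1)$, the right-hand side dominates $c \, m \, \tau^n_i(m)$ for a universal constant $c > 0$.

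Summing over $i \in [a - 2\delta n, a - \delta n - 1]$ and telescoping yields
\[
k \ge \E^k[N_k(a - \delta n) - N_k(a - 2\delta n)] \ge c \sum_{i = a - 2\delta n}^{a - \delta n - 1} \E^k\!\left[N_k(i+1) \, \tau^n_i(N_k(i+1))\right].
\]
Since $N_k(\cdot)$ is non-decreasing on the relevant range, restricting to $\{N_k(a - 2\delta n) \ge \e k\}$ and using $N_k(i+1) \ge N_k(a - 2\delta n)$ gives
\[
k \ge c \, \e k \cdot \tau^n_{a - 2\delta n, a - \delta n - 1}(\e k) \cdot \probak{N_k(a - 2\delta n) \ge \e k}.
\]
Taking $\alpha = \delta/2$ and $\beta = 1 - \delta/2$ in \ref{hyp:tightGHP}, and using $\h^n/n \to 1$ for the upper boundary, the interval $[a - 2\delta n, a - \delta n - 1]$ always meets $[\alpha n, \beta n]$ in a sub-interval of length at least $\delta n /2$. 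Concatenating $\lfloor (\delta/2) (\log\log(\e k))^2 \rfloor$ disjoint windows of length $n/(\log\log(\e k))^2$ then yields $\tau^n_{a - 2\delta n, a - \delta n - 1}(\e k) \gtrsim \delta (\log\log k)^2 \log k \to \infty$, whence $\probak{N_k(a - 2\delta n) \ge \e k} \to 0$ uniformly in $a$ and in large $n$. Since $N_k \le k$, this gives $\E^k[N_k(a - 2\delta n)]/k \le \e + o(1)$, and letting $\e \to 0$ concludes.

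The main obstacle I expect is the clean extraction of the lower bound $\E^k[\text{drop} \mid m] \gtrsim m \, \tau^n_i(m)$: one must be careful that the hypergeometric (rather than independent) nature of the fathers preserves the desired direction of inequality, that the $\P^k$-conditioning truly does not perturb the assignment at level $i$, and that the small-degree and large-degree regimes are covered in a single stroke by $\tau^n_i$ (both are handled uniformly by the $\min$ in its definition). A secondary nuisance is the uniformity in $a$ up to the extremes $2\delta n$ and $\h^n$, handled above by picking $\alpha, \beta$ symmetrically small in \ref{hyp:tightGHP} so that the usable sub-interval retains length of order $\delta n$.
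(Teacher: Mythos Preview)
Your argument is correct and follows the same skeleton as the paper's proof (differential inequality for $N_k(i+1)-N_k(i)$, restriction to $\{N_k\ge \e k\}$, telescoping, monotonicity of $N_k$), but the key step where you show the accumulated rate diverges is handled quite differently.

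The paper splits the degrees at level $i$ into large ($d^n_{i,j}>\e_n D^n_i$) and small ($d^n_{i,j}\le\e_n D^n_i$). For large degrees it keeps the bound $\sum_j \1_{d^n_{i,j}\,\text{large}}\,(d^n_{i,j}/D^n_i)(1-1/e)$, for small degrees it uses the pair-coalescence rate $2\binom{m}{2}p_{n,i}$. After summing in $i$, it invokes the convergence results \eqref{eq:cv rho epsilon n} and \ref{hyp:atomes} to pass to the continuum quantities $\rho([a,b])$ and $\sum_{t,j}\theta_j(t)\1_{\theta_j(t)>1/(\eta k-1)}$, and finally appeals to \ref{hyp:tightGP} to obtain divergence. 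In other words, the paper shows this lemma under the GP hypotheses alone (plus \ref{hyp:h} for the boundary), without ever touching \ref{hyp:tightGHP}.

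You instead treat all degrees uniformly through $\tau^n_i$, which lets you avoid the large/small dichotomy entirely, and then invoke \ref{hyp:tightGHP} directly on concatenated windows to get $\tau^n_{a-2\delta n,\,a-\delta n-1}(\e k)\to\infty$. This is cleaner and reuses the exact machinery already in place for Lemma~\ref{lem:chain}, at the price of relying on the stronger hypothesis \ref{hyp:tightGHP} for a step where \ref{hyp:tightGP} would have sufficed. Since the lemma sits inside the proof of Theorem~\ref{th:GHP}, where \ref{hyp:tightGHP} is assumed anyway, nothing is lost; the paper's route just isolates more sharply which hypothesis drives which part of the argument.
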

\begin{proof} The proof is essentially the same as for Lemma \ref{lem:end_lim_leaf-tight} with slightly different computations. Let $\delta>0$. Let $k\ge 1$, $n\ge 1$ and $2\delta n < a \le \h^n$. Recall the definition of the sequence $(\e_n)$ from Subsection \ref{sous-section coalescent discret}. For all $a- 2\delta n \le i \le a - \delta n$, we lowerbound
\begin{align*}
\E^k &\left[ N_k(i+1) - N_k(i) \vert N_k(i+1)\right]\\
&\ge 
\sum_{j=1}^\infty \1_{d^n_{i,j}>\e_n D^n_i} N_k(i+1)\frac{d^n_{i,j}}{D^n_i} \left(1 - \prod_{r=1}^{N_k(i+1)-1} \left(1- \frac{d^n_{i,j}-1}{D^n_i-r}\right)\right)
+ 2\binom{N_k(i+1)}{2} \sum_{j=1}^\infty \1_{d^n_{i,j} \le \e_n D^n_i} \frac{d^n_{i,j} (d^n_{i,j} -1)}{D^n_i(D^n_i-1)}\\
&\ge 
\sum_{j=1}^\infty \1_{d^n_{i,j}>\e_n D^n_i} N_k(i+1)\frac{d^n_{i,j}}{D^n_i} \left(1 -  \left(1- \frac{d^n_{i,j}-1}{D^n_i}\right)^{N_k(i+1)-1}\right)
+ 2\binom{N_k(i+1)}{2} \sum_{j=1}^\infty \1_{d^n_{i,j} \le \e_n D^n_i} \frac{d^n_{i,j} (d^n_{i,j} -1)}{D^n_i(D^n_i-1)}\\
&\ge \sum_{j=1}^\infty \1_{d^n_{i,j}>\e_n D^n_i \vee (1+ D^n_i/(N_k(i+1)-1))} N_k(i+1) \frac{d^n_{i,j}}{D^n_i} \left(1-\frac{1}{e}\right) + 2\binom{N_k(i+1)}{2} \sum_{j=1}^\infty \1_{d^n_{i,j} \le \e_n D^n_i} \frac{d^n_{i,j} (d^n_{i,j} -1)}{D^n_i(D^n_i-1)}
,
\end{align*}
where in the last inequality we used that for all $x\in (0,1),y>0$ such that $xy>1$ we have $(1-x)^y\leq 1/e$.

Next, fix $\eta>0$. Considering only the case where $N_k(i+1)\ge \eta k$ and taking the expectation we get 
\begin{align*}
\E^k &\left[ N_k(i+1) - N_k(i) \right]\\
&\ge \sum_{j=1}^\infty \1_{d^n_{i,j}>\e_n D^n_i \vee (1+ D^n_i/(\eta k-1))} \E^k[N_k(i+1)] \frac{d^n_{i,j}}{D^n_i} \left(1-\frac{1}{e}\right) + \E^k\left[2\binom{N_k(i+1)}{2}\right] \sum_{j=1}^\infty \1_{d^n_{i,j} \le \e_n D^n_i} \frac{d^n_{i,j} (d^n_{i,j} -1)}{D^n_i(D^n_i-1)}.
\end{align*}
We then sum over $a-2\delta n \le i \le a - \delta n$ and use the fact that $N_k \le k$, which gives
\begin{align*}
k\ge \sum_{a-2 \delta n \le i \le a-\delta n}&\sum_{j=1}^\infty \1_{d^n_{i,j}>\e_n D^n_i \vee (1+ D^n_i/(\eta k-1))} \E^k[N_k(i+1)] \frac{d^n_{i,j}}{D^n_i} \left(1-\frac{1}{e}\right) \\
+ &\sum_{a-2 \delta n \le i \le a-\delta n}\E^k\left[2\binom{N_k(i+1)}{2}\right] \sum_{j=1}^\infty \1_{d^n_{i,j} \le \e_n D^n_i} \frac{d^n_{i,j} (d^n_{i,j} -1)}{D^n_i(D^n_i-1)}.
\end{align*}
But since for all $a-2 \delta n \le i \le a- \delta n$ we have $N_k(i) \ge N_k(a-2\delta n)$, using Markov's inequality we obtain that
\begin{align}
k\ge &\P^k(N_k(a- 2 \delta n)\ge \eta k) (\eta k-1)\notag\\
&\times\sum_{a-2 \delta n \le i \le a-\delta n} \left(\sum_{j=1}^\infty \1_{d^n_{i,j}>\e_n D^n_i \vee (1+ D^n_i/(\eta k-1))}  \frac{d^n_{i,j}}{D^n_i} \left(1-\frac{1}{e}\right) 
+  (\eta k-1)\sum_{j=1}^\infty \1_{d^n_{i,j} \le \e_n D^n_i} \frac{d^n_{i,j} (d^n_{i,j} -1)}{D^n_i(D^n_i-1)} \right).\label{eq majoration nombre de gens qui ne coalescent pas}
\end{align}
Finally, by \eqref{eq:cv rho epsilon n}, by \ref{hyp:atomes} and $\h^n\sim n$, we deduce that
\begin{align*}
    &\liminf_{n\to \infty} \min_{2 \delta n < a \le \h^n}
    \sum_{a-2 \delta n \le i \le a-\delta n} \left(\sum_{j=1}^\infty \1_{d^n_{i,j}>\e_n D^n_i \vee (1+ D^n_i/(\eta k-1))}  \frac{d^n_{i,j}}{D^n_i} \left(1-\frac{1}{e}\right) 
+  (\eta k-1)\sum_{j=1}^\infty \1_{d^n_{i,j} \le \e_n D^n_i} \frac{d^n_{i,j} (d^n_{i,j} -1)}{D^n_i(D^n_i-1)} \right)\\
&\ge \min_{1 \le \ell \le \lceil 2/\delta \rceil}
\left( \left(1-\frac{1}{e} \right) \sum_{(\ell-1) \delta/2 < t < \ell \delta/2} \sum_{j=1}^\infty \1_{\theta_j(t)>1/(\eta k-1)} \theta_j(t) + (\eta k-1) \rho([(\ell-1)\delta/2, \ell \delta/2])
\right)\\
&\cv[k] \infty,
\end{align*}
where the last convergence stems from \ref{hyp:tightGP}. Taking \eqref{eq majoration nombre de gens qui ne coalescent pas} into account, we have thus proven that for all $\eta>0$,
$$
\lim_{k\to \infty} \limsup_{n \to \infty} \max_{2 \delta n <a < \h^n} \P^k(N_k(a-2\delta n) \ge \eta k) = 0.
$$
This concludes the proof since $N_k \le k$.
\end{proof}
\section{Application to BGW trees in varying environment} \label{sec:appBGWTVE}

Let us describe here how our result gives general conditions under which Bienaymé--Galton--Watson trees in varying environment stopped at height $n$ satisfy a scaling limit for the GP and the GHP topology. We rely on the scaling limit of the profile obtained by Bansaye and Simatos in \cite{BS15}. This section can be read just after the introduction. We refer to \cite{BS15} for examples. {Let us stress that our conditions in this section are less general than in the previous sections, inasmuch as here we make an assumption which rules out the case where two high degrees are allowed to be at the same height (see Remark \ref{remarque pas de grands degrés à la meme hauteur}). This assumption simplifies the analysis since in this section, the jumps of the scaling limit of the profile will exactly correspond to the scaling limit of the high degrees.}

As in \cite{BS15}, for all $n \ge 1$, independently for all $1 \le i \le n$, let $(\xi_{i,n}(j))_{j\ge 1}$ be a family of i.i.d.\@ random variables in the set of non-negative integers. We recall that the Bienaymé--Galton--Watson process in varying environment $(Z_{i,n})_{0 \le i \le n}$ is defined by setting $Z_{0,n}=1$ and for all $i \in [n]$, 
$$
Z_{i,n} = \sum_{j=1}^{Z_{i-1,n}} \xi_{i,n}(j).
$$
Note that the environment, i.e.\@ the law of the $\xi_{i,n}(j)$'s, depends on $i$. The Bienaymé--Galton--Watson tree in varying environment $\T_n$ of height $n$ may be constructed conditionally on $(\xi_{i,n}(j))_{j\ge 1, 0 \le i \le n}$ as the tree with vertices of fixed degrees and heights obtained by letting for all $0 \le i\le n-1$ the sequence $(d_{i,j}^n)_{1 \le j \le Z_{i,n}}$ be a reordering of the $\xi_{i+1,n}(j)$'s for $1 \le j \le Z_{i,n}$ in the non-increasing order, by setting $d^n_{i,j}=0$ {for all $i,j$ such that} $j > Z_{i,n}$ or $i \ge n$.

We mainly follow the notation of \cite{BS15} except that here $n$ is the scale of the height (i.e.\@ the time scale for the GWVE process). Let $(\ell_n)_{n\ge 1}$ be a sequence of positive real numbers tending to $\infty$ which will be the scale of the profile (i.e.\@ the space scale). 
To simplify the notation, we write $\xi_{i,n} = \xi_{i,n}(1)$ for all $i \in [n]$. As in \cite{BS15}, we define the rescaled quantities for all integers $n\ge 1,i \in [n]$ and for all $t \in [0,1]$,
$$
\overline{\xi}_{i,n} = \frac{1}{\ell_n} (\xi_{i,n} -1),
\qquad \alpha_{i,n} = \E\left[\frac{\overline{\xi}_{i,n}}{1+\overline{\xi}_{i,n}^2}\right], \qquad \beta_{i,n} = \E\left[\frac{\overline{\xi}_{i,n}^2}{1+\overline{\xi}_{i,n}^2} 
\right] \qquad \text{and} \qquad 
X_n(t)= \frac{1}{\ell_n} Z_{\lfloor nt \rfloor,n}.
$$
Intuitively, $\alpha_{i,n}$ is a proxy for the drift and $\beta_{i,n}$ plays the role of a ``variance'' of $\overline{\xi}_{i,n}$. We then set for all $n\ge 1, t\in [0,1]$, $x>0$,
$$
\alpha_n(t) = \ell_n \sum_{i={1}}^{\lfloor nt \rfloor} \alpha_{i,n},
\quad
\beta_n(t) = \frac{1}{2} \ell_n \sum_{i={1}}^{\lfloor nt \rfloor }\beta_{i,n}  \quad
\text{and} \quad
\mu_n([x,\infty)\times(0,t]) = \ell_n \sum_{i={1}}^{\lfloor nt \rfloor } \P(\overline{\xi}_{i,n} \ge x).
$$
 Next, {we use a simpler version of} Assumption A of \cite{BS15} in our framework. We assume that there exist a càdlàg 
function of finite variation $\alpha: [0,1]\to \R$, an increasing càdlàg
function $\beta:[0,1] \to \R$ and a positive measure $\mu$ on $(0,\infty)\times (0,1)$ such that
\begin{description}[topsep=0pt,itemsep=-1ex,partopsep=1ex,parsep=1ex,labelwidth=1.6cm,leftmargin=!,align=CenterWithParen]
\item[\textcolor{link}{\optionaldesc{$(\mathsf{A1})$}{hyp:A1}}] For all $t \in [0,1],x> 0$ such that $\mu(\{x\} \times (0,t])=0$,
\begin{align*}
\alpha_n(t)\cv[n] \alpha(t), \qquad &\lVert \alpha_n \rVert (t) \cv[n] \lVert \alpha \rVert (t), \qquad \beta_n(t) \cv[n] \beta(t)\\
\enskip &\text{and} \enskip
\mu_n([x,\infty)\times(0,t]) \cv[n] \mu([x,\infty)\times(0,t]),
\end{align*}
where $\lVert \cdot \rVert$ is the total variation.
\item[\textcolor{link}{\optionaldesc{$(\mathsf{A'2})$}{hyp:A2}}] {The functions $\alpha$, $\beta$ are continuous and the measure $\mu((0,\infty) \times \mathrm{d} t)$ has no atoms.}
\end{description}
{Note that \ref{hyp:A1} implies the weak convergence of the restriction of $\mu_n$ to $[x, \infty) \times (0,t]$ towards the restriction of $\mu$ to $[x, \infty) \times (0,t]$. Our assumption \ref{hyp:A2} makes Assumption (A2) of \cite{BS15} void since the functions $\alpha, \beta$ and $t\mapsto \mu([x,\infty)\times (0,t])$ are continuous for all $x>0$ by \ref{hyp:A2}.}

We further assume the following condition coming from Proposition 2.2 of \cite{BS15}:
\begin{description}[topsep=0pt,itemsep=-1ex,partopsep=1ex,parsep=1ex,labelwidth=1.6cm,leftmargin=!,align=CenterWithParen]
\item[\textcolor{link}{\optionaldesc{$(\mathsf{A3})$}{hyp:nobottleneck}}] For every $C>0$, 
$$
\liminf_{n\to \infty} \left( \inf_{1 \le i \le n }
\E\left[\xi_{i,n} \1_{\xi_{i,n \le C \ell_n}}\right]\right)>0.
$$
\end{description}
Then, by Theorem 2.1 of \cite{BS15}, we know that there exists a càdlàg process $(X(t))_{t\in [0,1]}$ taking its values in $[0,\infty]$ such that
\begin{equation}\label{eq cv GWVE}
(X_n(t))_{t \in [0,1]}\cvloi[n](X(t))_{t\in [0,1]}
\end{equation}
for the Skorokhod J1 topology on the space $\mathbb{D}([0,1], [0,\infty])$, where $[0,\infty]$ is equipped with the distance $d(x,y) = \vert e^{-x}-e^{-y}\vert$, and $\infty$ is an absorbing state of the process $(X(t))_{t\in [0,1]}$. As in the first point of Theorem 2.1 of \cite{BS15}, we define the {(continuous)} function $\widetilde{\beta}$ by setting for all $t \in [0,1]$,
$$
\widetilde{\beta}(t) = \beta(t) - \frac{1}{2} \int_{(0,\infty) \times (0,t]} \frac{x^2}{1+x^2} \mu(\mathrm{d}x,\mathrm{d}t).
$$

On the event ``$X(1) \in (0,\infty)$'', we define the measures on $(0,1)$
$$
\nu(\mathrm{d}t) = \frac{X(t)\mathrm{d}t}{\int_0^1 X(s) \mathrm{d}s} \qquad \text{and} \qquad
\rho(\mathrm{d}t)  = \frac{2\widetilde{\beta}(\mathrm{d}t)}{X(t)},
$$
where $\widetilde{\beta}(\mathrm{d}t)$ is the Lebesgue-Stieltjes measure associated with $\widetilde{\beta}$. 
We also let $(\Delta_+X(t))_{t \in (0,1)}$ be the collection of the positive jumps of $X$ in $(0,1)$ and for all $t\in (0,1)$, we set $\theta_1(t) = \Delta_+X(t)/X(t)$ and $\theta_j(t)=0$ for all $j\ge 2$. We set $\Theta = ((\theta_j(t))_{j\ge 1})_{t\in (0,1)}$.

By Skorokhod's representation theorem, we may assume that \eqref{eq cv GWVE} holds almost surely. Then the consequence of Theorems \ref{th:GP} and \ref{th:GHP} is the following:
\begin{corollary}\label{corollaire CWVE}
    Suppose that $\alpha, \beta, \mu$ satisfy the conditions \ref{hyp:A1}, \ref{hyp:A2} and \ref{hyp:nobottleneck}. 
    Assume that $\P(\forall t \in (0,1], \ 0<X(t)<\infty)>0$.
    \begin{itemize}
        \item If for all $0<a<b<1$, 
        we have {$\int_{(0,\infty) \times [a,b]} x\mu( \mathrm{d}x, \mathrm{d} t)=\infty$} or 
        $\widetilde{\beta}(b)-\widetilde{\beta}(a)>0$, then on the event ``$\forall t \in (0,1], \ X(t) \in (0,\infty)$'', the tree $\T_n/n$ converges in distribution to the random metric space $\T(\nu, \rho, \Theta)$ for the GP topology.
        \item If for all $\delta>0$, for all $0<a<b<1$, there exist $n_0,k_0 \ge 0$ such that for all $n\ge n_0$, on the event ``$\forall t \in (0,1], \ X(t) \in (0,\infty)$'', with probability at least $1-\delta$, for all $k_0 \le k \le \max_{1 \le i \le n} Z_{i,n}$, for all $a n \le h \le b n$, 
        \begin{equation}\label{eq hypothese GWVE tight GHP}
        \sum_{i=h}^{h+ n/ (\log \log k)^2} \sum_{j=1}^{Z_{i-1,n}} \frac{\xi_{i,n}(j)}{Z_{i,n}} \min \left( k \frac{\xi_{i,n}(j)-1}{Z_{i,n}-1}, 1\right)\ge \log k,
        \end{equation}
        then the same convergence holds for the GHP topology.
    \end{itemize}
\end{corollary}

{
Let us comment about the assumptions. Note that, as a consequence of \ref{hyp:A1} and \ref{hyp:A2}, by Dini's theorem, for all $x>0$ which is not an atom of $\mu(\mathrm{d} x \times (0,1))$ and for all non-negative continuous function $\varphi$, the convergences of the non-decreasing functions $t \mapsto \lVert \alpha_n \rVert(t), t\mapsto \beta_n(t)$ and $t\mapsto \int_{[x,\infty) \times (0,t]} \varphi(y) \mu_n(\mathrm{d} y, \mathrm{d} s)$ hold uniformly on $[0,1]$ since the limits are continuous.

Before going further, we introduce some more notation. For all $i \in [n]$ and $\e>0$, let $\alpha_{i,n, \e} \coloneqq \E\left[ \1_{\overline{\xi}_{i,n} \le \e} \overline{\xi}_{i,n}/(1+ \overline{\xi}_{i,n}^2) \right]$. Note that \ref{hyp:A1} and \ref{hyp:A2} imply that
\begin{equation}\label{eq alpha i n epsilon tend vers zero}
\lim_{n\to \infty} \ell_n  \sup_{i \in [n]} \left\vert \alpha_{i,n,\e} \right\vert =0
\end{equation}
since the convergences of $t\mapsto \lVert \alpha_n \rVert(t)$ and $t\mapsto \int_{[\e,\infty) \times (0,t]} {x}/{(1+x^2)}\mu_n (\mathrm{d}x, \mathrm{d}s)$ hold uniformly on $[0,1]$.
}
\begin{remark}\label{remarque pas de grands degrés à la meme hauteur}
Note that the assumption that $\mu((0,\infty)\times \mathrm{d}t)$ has no atoms rules out the case where two high degree vertices are at the same height. If one removes this assumption, then the $\theta_j(t)$'s could not be described using only the process $X$. {Similarly, the continuity of $\alpha$ ensures, through \eqref{eq alpha i n epsilon tend vers zero}, that the small degree vertices do not contribute to the jumps of $X$ so that the $\theta_j(t)$'s indeed correspond to the positive jumps of $X$.} {We leave the question of obtaining the scaling limit in the case where $\mu((0,\infty)\times \mathrm{d}t)$ has atoms or when $\alpha$ is not continuous as an open problem.}
\end{remark}
{
\begin{remark}
    One can present the above assumptions in the equivalent form. One can replace \ref{hyp:A1} and \ref{hyp:A2} by the assumptions that \ref{hyp:A1} holds, that $\widetilde{\beta}$ is continuous, that $\mu((0,\infty)\times \mathrm{d} t)$ has no atoms, that Assumption (A2) of \cite{BS15} holds, i.e.\@ for every $t \in [0,1]$ such that $\Delta \alpha(t) \neq 0$ or $\Delta\beta(t) \neq 0$, 
$$
{\ell_n} \alpha_{\lfloor n t \rfloor, n} \cv[n] \Delta \alpha (t)
\qquad \text{and} \qquad {\ell_n} \beta_{\lfloor n t \rfloor, n}\cv[n] \Delta \beta (t) \enskip
$$
and that 
    \[
    \lim_{\e \to 0} \limsup_{n\to \infty} \ell_n  \sup_{i \in [n]} \left\vert \alpha_{i,n,\e} \right\vert =0.
    \]
    The above convergence can be seen as a ``near-criticality'' condition. Clearly, the above assumptions are implied by \ref{hyp:A1} and \ref{hyp:A2}. But they are actually equivalent. Indeed, from the definition of $\widetilde{\beta}$ and the fact that $\mu((0,\infty)\times \mathrm{d} t)$ has no atoms, one can see that $\beta$ is continuous. Moreover, the above two displays and the fact that $\mu((0,\infty)\times \mathrm{d} t)$ has no atoms, together with \ref{hyp:A1} imply the continuity of $\alpha$. One could also add a time-change $\gamma_n : [0,1 ]\to [0,1]$ converging uniformly to $x\mapsto x$ in the definitions of $\alpha_n, \beta_n$ and $\mu_n$ and in the above assumption on jumps: the assumption \ref{hyp:A2} would still be satisfied.
\end{remark}
}
{
\begin{remark}
    The assumption \eqref{eq hypothese GWVE tight GHP} corresponds to the condition \ref{hyp:tightGHP} and involves the behavior of the process at every scale, not only the one of the scaling limit, that is why we believe it has to be checked separately in the presence of large degrees.
\end{remark}
}

{
Before starting the proof, let us introduce the following martingale with respect to the natural filtration $(\mathcal{F}^n_i)_{0 \le i \le n}$ associated to the process $((\xi_{i,n}(j))_{j\ge 1} )_{i\in [n]}$. For all $\e>0$ and $\delta>0$, for all $0 \le k\le n$, we define
\begin{equation}\label{eq martingale}
M^{n,\e, \delta}_k \coloneqq \sum_{i=1}^k \sum_{j=1}^{Z_{i-1,n}} \left( \frac{\overline{\xi}_{i,n}(j)}{1+ \overline{\xi}_{i,n}(j)^2} - \frac{{\alpha}_{i,n,\e}}{\P(\overline{\xi}_{i,n} \le \e)} \right) \1_{\overline{\xi}_{i,n}(j) \le \e \text{ and } \delta \ell_n \le Z_{i-1,n}\le \ell_n/\delta}.
\end{equation}
\begin{lemma}\label{lemme martingale}
For all $n\ge 1$ and $\e,\delta>0$, the process $(M^{n,\e,\delta}_k)_{0 \le k \le n}$ is a martingale with respect to the filtration $(\mathcal{F}^n_k)_{0\le k \le n}$. Moreover, for all $0\le s<t\le 1$,
\[
\lim_{\e \to 0}\limsup_{n\to \infty}\E\left[\sup_{ \lfloor ns \rfloor \le k \le  \lfloor nt\rfloor} \left( M^{n,\e,\delta}_k - M^{n,\e,\delta}_{\lfloor ns \rfloor} \right)^2\right]\le \frac{8(\widetilde{\beta}(t)-\widetilde{\beta}(s))}{\delta}.
\]
In particular, for all $\eta>0$ and $\e'\in (0,1)$,
\[
\lim_{\e \to 0} \limsup_{n\to \infty} \P \left( \sup_{\e'n \le i \le (1-\e')n} \left\vert \sum_{j=1}^{Z_{i-1,n}} \left(\frac{\overline{\xi}_{i,n}(j)}{1+ \overline{\xi}_{i,n}(j)^2} - \frac{{\alpha}_{i,n,\e}}{\P(\overline{\xi}_{i,n} \le \e)} \right) \1_{\overline{\xi}_{i,n}(j) \le \e }\right\vert \ge \eta \text{ and } \forall t\in (0,1], \ X(t)\in (0,\infty)\right) =0.
\]
\end{lemma}
\begin{proof}
    The fact that $(M^{n,\e,\delta}_k)_{0\le k \le n}$ is a martingale stems from the definition of $\alpha_{i,n,\e}$. Let $0\le s<r \le 1$. By Doob's martingale inequality, 
    \[\E\left[\sup_{\lfloor ns\rfloor\le k \le \lfloor nt \rfloor} \left( M^{n,\e,\delta}_k-M^{n,\e,\delta}_{\lfloor ns \rfloor }\right)^2\right] \le 4\E\left[ \left( M^{n,\e,\delta}_{\lfloor nt \rfloor} -M^{n,\e,\delta}_{\lfloor ns \rfloor}\right)^2\right].\]
    Moreover, 
    \begin{align*}
         \E\left[ \left( M^{n,\e,\delta}_{\lfloor nt \rfloor} -M^{n,\e,\delta}_{\lfloor ns \rfloor}\right)^2\right]&= \E\left[\sum_{i=\lfloor ns \rfloor +1}^{\lfloor nt \rfloor} \sum_{j=1}^{Z_{i-1,n}} \left( \frac{\overline{\xi}_{i,n}(j)}{1+ \overline{\xi}_{i,n}(j)^2} - \frac{{\alpha}_{i,n,\e}}{\P(\overline{\xi}_{i,n} \le \e)} \right)^2 \1_{\overline{\xi}_{i,n}(j) \le \e\text{ and } \delta \ell_n \le Z_{i-1,n}\le \ell_n/\delta} \right]\\
         &\le \E \left[  \sum_{i=\lfloor ns \rfloor +1}^{\lfloor nt \rfloor} \sum_{j=1}^{Z_{i-1,n}} \left(\frac{\overline{\xi}_{i,n}(j)}{1+ \overline{\xi}_{i,n}(j)^2}\right)^2 \1_{\overline{\xi}_{i,n}(j) \le \e\text{ and } \delta \ell_n \le Z_{i-1,n}\le \ell_n/\delta}\right]\\
         &\le \E\left[  \sum_{i=\lfloor ns \rfloor+1}^{\lfloor nt \rfloor} \sum_{j=1}^{Z_{i-1, n}} \frac{\overline{\xi}_{i,n}(j)^2}{1+ \overline{\xi}_{i,n}(j)^2} \1_{\overline{\xi}_{i,n}(j) \le \e\text{ and } \delta \ell_n \le Z_{i-1,n}\le \ell_n/\delta}\right],
    \end{align*}
    where in the first line, we used the independence of the $\overline{\xi}_{i,n}(j)'s$ conditionally on $Z_{i-1, n}$ and in the first inequality we used the fact that the variance under $\P(\cdot \vert \overline{\xi}_{i,n}(j) \le \e)$ is smaller or equal to the second moment.

    Next, we bound from above
    \begin{align*}
        \E\left[  \sum_{i=\lfloor ns \rfloor +1}^{\lfloor nt \rfloor} \sum_{j=1}^{Z_{i-1, n}} \frac{\overline{\xi}_{i,n}(j)^2}{1+ \overline{\xi}_{i,n}(j)^2} \1_{\overline{\xi}_{i,n}(j) \le \e\text{ and } \delta \ell_n \le Z_{i-1,n}\le \ell_n/\delta}\right]
        &\le \sum_{i=\lfloor ns \rfloor+1}^{\lfloor nt \rfloor} \frac{\ell_n}{\delta} \E\left[\frac{\overline{\xi}_{i,n}^2}{1+ \overline{\xi}_{i,n}^2} \1_{\overline{\xi}_{i,n}\le \e}\right]\\
        &=\frac{\ell_n}{\delta}\sum_{i=\lfloor ns \rfloor+1}^{\lfloor nt \rfloor} \left(
        \beta_{i,n} - \E\left[\frac{\overline{\xi}_{i,n}^2}{1+ \overline{\xi}_{i,n}^2} \1_{\overline{\xi}_{i,n}> \e} \right]
        \right).
    \end{align*}
    Moreover, by \ref{hyp:A1},
    \[
    \limsup_{n\to \infty}
    \frac{\ell_n}{\delta}\sum_{i=\lfloor ns \rfloor+1}^{\lfloor nt \rfloor} \left(
        \beta_{i,n} - \E\left[\frac{\overline{\xi}_{i,n}^2}{1+ \overline{\xi}_{i,n}^2} \1_{\overline{\xi}_{i,n}> \e} \right] \right)
    \le
    \frac{2}{\delta} \left( \beta(t)-\beta(s) - \frac{1}{2}\int_{(\e, \infty) \times [s,t]} \frac{x^2}{1+x^2} \mu(\mathrm{d} x, \mathrm{d} t) \right).
    \]
    We conclude the proof of the first point of the lemma by letting $\e\to 0$.

    For the second statement of the lemma, let $\e'\in (0,1)$. Thanks to \eqref{eq cv GWVE}, we have
    \[
    \lim_{\delta \to 0} \P \left( \forall i \in [\e'n, (1-\e')n], \ \delta \ell_n \le Z_{i-1,n} \le \ell_n/ \delta \text{ and } \forall t \in (0,1], \ X(t)\in (0,\infty)\right) = \P\left( \forall t \in (0,1], \ X(t) \in (0,\infty) \right).
    \]
    So, it is enough to show that for all $\delta>0, \eta>0$,
    \[
\lim_{\e \to 0} \limsup_{n\to \infty} \P \left( \sup_{i \in [n]} \left\vert \sum_{j=1}^{Z_{i-1,n}} \left(\frac{\overline{\xi}_{i,n}(j)}{1+ \overline{\xi}_{i,n}(j)^2} - \frac{{\alpha}_{i,n,\e}}{\P(\overline{\xi}_{i,n} \le \e)} \right) \1_{\overline{\xi}_{i,n}(j) \le \e \text{ and } \delta\ell_n \le Z_{i-1, n} \le \ell_n/\delta} \right\vert\ge \eta \right) =0.
\]
Let $0=t_0<t_1<\ldots<t_r=1$ be a subdivision of $[0,1]$. Then, note that for all $\ell \in [r]$, for all $\lfloor nt_{\ell-1}\rfloor +1\le i \le nt_\ell$,
\[
\sum_{j=1}^{Z_{i-1,n}} \left(\frac{\overline{\xi}_{i,n}(j)}{1+ \overline{\xi}_{i,n}(j)^2} - \frac{{\alpha}_{i,n,\e}}{\P(\overline{\xi}_{i,n} \le \e)} \right) \1_{\overline{\xi}_{i,n}(j) \le \e \text{ and } \delta\ell_n \le Z_{i-1, n} \le \ell_n/\delta}   = \left(M^{n,\e,\delta}_{i} - M^{n,\e,\delta}_{\lfloor nt_{\ell-1} \rfloor }\right) - \left( M^{n,\e,\delta}_{i-1} - M^{n,\e,\delta}_{\lfloor nt_{\ell-1} \rfloor }\right).
\]
Hence,
\[
\left( \sum_{j=1}^{Z_{i-1,n}} \left(\frac{\overline{\xi}_{i,n}(j)}{1+ \overline{\xi}_{i,n}(j)^2} - \frac{{\alpha}_{i,n,\e}}{\P(\overline{\xi}_{i,n} \le \e)} \right) \1_{\overline{\xi}_{i,n}(j) \le \e \text{ and } \delta\ell_n \le Z_{i-1, n} \le \ell_n/\delta} \right)^2
\le 4 \sup_{\lfloor nt_{\ell-1} \rfloor \le k \le \lfloor nt_\ell\rfloor } \left( M^{n,\e,\delta}_k - M^{n,\e,\delta}_{\lfloor nt_{k-1} \rfloor} \right)^2.
\]
Thus,
\[
\sup_{i\in [n]} \left( \sum_{j=1}^{Z_{i-1,n}} \left(\frac{\overline{\xi}_{i,n}(j)}{1+ \overline{\xi}_{i,n}(j)^2} - \frac{{\alpha}_{i,n,\e}}{\P(\overline{\xi}_{i,n} \le \e)} \right) \1_{\overline{\xi}_{i,n}(j) \le \e \text{ and } \delta\ell_n \le Z_{i-1, n} \le \ell_n/\delta} \right)^2
\le \max_{\ell \in [r]} 4 \sup_{\lfloor nt_{\ell-1} \rfloor \le k \le \lfloor nt_\ell\rfloor } \left( M^{n,\e,\delta}_k - M^{n,\e,\delta}_{\lfloor nt_{k-1} \rfloor} \right)^2.
\]
One concludes using the first convergence of the lemma and the fact that $\widetilde{\beta}$ is continuous, by taking a subdivision with a step chosen small enough.
\end{proof}
}
{
Let us also state a technical lemma. It may be well known but we could not find it in the literature.
\begin{lemma}\label{lemme technique cv skorokhod}
    Let $(f_n)_{n\ge 1}$ be a sequence of càdlàg functions from $[0,1]$ to $\R$ which converges towards a càdlàg function $f:[0,1]\to \R$ for the $J_1$ topology of Skorokhod. Let $(\mathfrak{m}_n)_{n\ge 1}$ be a sequence of finite (positive) measures on $[0,1]$ converging weakly towards a measure $\mathfrak{m}$. Assume that $\mathfrak{m}$ has no atoms. Then,
    \[
    \int_0^1 f_n(x) \mathfrak{m}_n(\mathrm{d} x) \cv \int_0^1 f(x) \mathfrak{m}(\mathrm{d} x).
    \]
\end{lemma}
\begin{proof}
Since $\mathfrak{m}_n([0,1])$ converges towards $\mathfrak{m}([0,1])$, we may assume that $\mathfrak{m}_n$ and $\mathfrak{m}$ are probability measures. 
Let $Y_n$ for $n\ge 1$ and $Y$ be random variables of respective laws $\mathfrak{m}_n$ for $n\ge 1$ and $\mathfrak{m}$. Since $\mathfrak{m}_n$ converges weakly towards $\mathfrak{m}$, by Skorokhod's representation theorem, we may assume that $Y_n$ converges a.s.\@ to $Y$. Moreover, since $\mathfrak{m}$ has no atoms, the càdlàg function $f$ is a.s.\@ continuous at $Y$. As a result, by the point (b.5) of Proposition 2.1 in Chapter VI of \cite{JS03},
    \[
    f_n(Y_n) \cvps f(Y).
    \]
    So, by dominated convergence (using the fact that $f$ is bounded since it is càdlàg on $[0,1]$),
    \[
    \int_0^1 f_n(x) \mathfrak{m}_n (\mathrm{d} x) 
    = \E\left[ f_n(Y_n)\right] \cv 
    \E\left[f(Y)\right]= \int_0^1 f(x) \mathfrak{m}(\mathrm{d} x).
    \]
    This is the desired result.
\end{proof}
}

\begin{proof}[Proof of Corollary \ref{corollaire CWVE}]
    The proof boils down to checking that the conditions \eqref{eq:pas de coalescence avec proba non nulle}, \ref{hyp:naissance}, \ref{hyp:splitatomes}, \ref{hyp:atomes}, \ref{hyp:coalescence}, with \ref{hyp:tightGP} and \ref{hyp:tightGHP} for the GP convergence and the GHP convergence, are satisfied in probability. The assumption \ref{hyp:naissance} comes clearly from \eqref{eq cv GWVE}. 
    
    {\emph{Step 1: Proof of \ref{hyp:splitatomes}.}} Note that for all $A>0$, {for all $\e>0$,} if $\delta_n \to 0$,
    \begin{align*}
    \P\big(
    \exists i_1,i_2 \in [n] ,&\exists j_1, j_2 \le A \ell_n, \enskip (i_1,j_1) \neq (i_2, j_2),\enskip \vert i_1 -i_2 \vert \le n\delta_n \text{ and } \xi_{i_1,n}(j_1), \xi_{i_2,n}(j_2)\ge \e \ell_n\big)
    \\
    &\le 
    A^2 \ell_n^2\sum_{i_1=1}^n  \P(\xi_{i_1,n} \ge \e \ell_n) \sum_{i_1 - n \delta_n \le i_2 \le i_1 + n \delta_n}\P(\xi_{i_2,n} \ge \e \ell_n)\\
    &\le A^2 \mu_n([\e,\infty)\times(0,1]) \max_{1 \le i_1 \le n}  \ell_n \sum_{\substack{i_1 - n \delta_n \le i_2 \le i_1 + n \delta_n\\ {i_2 \in [n]}}}\P(\xi_{i_2,n} \ge \e \ell_n),
    \end{align*}
    and therefore, {for all $\e>0$,}
    \begin{equation}\label{eq pas deux gros degres a la meme hauteur}
        \P\left(
    \exists i_1,i_2 \in [n] ,\exists j_1, j_2 \le A \ell_n, \enskip (i_1,j_1) \neq (i_2, j_2),  \enskip \vert i_1 -i_2 \vert \le n\delta_n \text{ and } \xi_{i_1,n}(j_1), \xi_{i_2,n}(j_2)\ge \e \ell_n\right)
    \cv[n] 0,
    \end{equation}
    since $\mu((0,\infty)\times \mathrm{d}t)$ has no atoms and since $\mu_n \to \mu$ by \ref{hyp:A1}. {Moreover, on the event ``$\forall t \in (0,1], \ X(t) \in (0,\infty)$'', since $X$ is càdlàg, for all $\e>0$, there exists a random variable $Y>0$ such that $\forall t \in [\e,1-\e], \ X(t) \in (Y,1/Y)$. So, by \eqref{eq cv GWVE}, which holds almost surely, we get that with high probability, for all $ t \in [\e,1-\e]$, we have $ \frac{1}{\ell_n} Z_{\lfloor n t\rfloor, n} \in (Y,1/Y)$. Combined with \eqref{eq pas deux gros degres a la meme hauteur},} this gives directly \ref{hyp:splitatomes} with high probability. 
    
    {\emph{Step 2: Proof of \ref{hyp:atomes}.} Note that by the Poisson paradigm, thanks to the convergence of $\mu_n$ to $\mu$, we have the vague convergence in distribution of the random measure on $(0,1)\times (0,\infty) \times (0,\infty)$
    \begin{equation}\label{eq cv PPP degres}
    \sum_{i \in [n]} \sum_{j\ge 1} \delta_{(j/\ell_n, \xi_{i,n}(j)/\ell_n,i/n) }
    \cvloi
    \sum_{t\ge 0} \1_{(Y(t), \Xi(t)) \neq \partial} \delta_{(Y(t), \Xi(t),t)},
    \end{equation}
    where $(Y(t), \Xi(t),t)_{t\ge 0}$ is a Poisson point process of intensity $\mathrm{d}y \mu(\mathrm{d}x,\mathrm{d}t)$ and where $\partial$ is a cemetery point. After applying Prohorov's theorem and Skorokhod's representation theorem, we may assume that the above convergence holds jointly with \eqref{eq cv GWVE} along some subsequence which is again denoted using the index $n$.

    {
    Let $\delta>0$ which is not an atom of $\mu(\mathrm{d} x\times (0,1))$ and let $k \in \N$. Let $t_k\in (0,1)$ be the $k$-th time such that $(Y(t_k), \Xi(t_k))\neq \partial$, $Y(t_k) \le X(t_k)$ and that $\Xi(t_k)\ge \delta$. Such a time is well-defined (or does not exist) thanks to the fact that $\mu([\delta,\infty) \times (0,1))<\infty$. By \eqref{eq cv GWVE}, by \eqref{eq cv PPP degres} and since $\mu([\delta,\infty) \times (0,1))<\infty$, if we let $(I_{n,k},J_{n,k})$ be the $k$-th pair (in the lexicographic order) such that $J_{n,k} \le Z_{I_{n,k}-1,n}$ and $\xi_{I_{n,k}, n}(J_{n,k}) \ge \delta \ell_n$, then we have the almost sure convergence
    \begin{equation}\label{eq cv atomes PPP}
    (I_{n,k}/n,J_{n,k}/\ell_n, \xi_{I_{n,k}, n}(J_{n,k})/\ell_n) \cvps  (t_k, Y({t_k}), \Xi(t_k)).
    \end{equation}
    For all $i \in [n]$ and $\e>0$, let
    $
    \widetilde{\alpha}_{i,n,\e} \coloneqq {\alpha_{i,n,\e}}/{\P(\overline{\xi}_{i,n} \le \e)}$. Note that for all $\e\in (0,\delta)$,
\begin{align}
    \frac{Z_{I_{n,k},n}-Z_{I_{n,k}-1, n}}{\ell_n} =
     &\sum_{j=1}^{Z_{I_{n,k}-1, n}} \overline{\xi}_{I_{n,k}, n}(j) \1_{\overline{\xi}_{I_{n,k},n}(j) >\e}\label{premiere ligne increment Z}\\
    &+ \sum_{j=1}^{Z_{I_{n,k}-1, n}} \left(\overline{\xi}_{I_{n,k},n}(j) -\widetilde{\alpha}_{I_{n,k},n,\e} \right)\1_{\overline{\xi}_{I_{n,k},n}(j) \le \e}
    \label{deuxieme ligne increment Z}\\
    &+ \widetilde{\alpha}_{I_{n,k}, n, \e} \# \{1 \le j\le Z_{I_{n,k}-1, n}; \ \overline{\xi}_{I_{n,k},n}(j) \le \e\}.\label{troisieme ligne increment Z}
\end{align}
    }
    
    By \eqref{eq cv PPP degres}, by \eqref{eq cv atomes PPP} and since $\mu((0,\infty)\times \mathrm{d}t)$ has no atoms, almost surely, the sum on the first line \eqref{premiere ligne increment Z} has only one non-zero term when $n$ is large enough:
    \[
    \sum_{j=1}^{Z_{I_{n,k}-1, n}} \overline{\xi}_{I_{n,k}, n}(j) \1_{\overline{\xi}_{I_{n,k},n}(j) >\e} = \overline{\xi}_{I_{n,k}, n}(J_{n,k}) \cvps \Xi(t_k).
    \]
    Furthermore, by Lemma \ref{lemme martingale}, for all $\eta>0$,
    \begin{equation}\label{eq cv xi sur un plus xi carre petit saut}
    \lim_{\e\to 0} \limsup_{n\to \infty} \P\left(
    \left\vert\sum_{j=1}^{Z_{I_{n,k}-1, n}} \left(\frac{\overline{\xi}_{I_{n,k},n}(j)}{1+\overline{\xi}_{I_{n,k},n}(j)^2} -\widetilde{\alpha}_{I_{n,k},n,\e} \right)\1_{\overline{\xi}_{I_{n,k},n}(j) \le \e}\right\vert \ge \eta
    \text{ and }
    \forall t \in (0,1], \ X(t) \in (0,\infty)
    \right)
    =0.
    \end{equation}
    But note that for all $\delta>0$, on the event that $Z_{I_{n,k}-1,n} \le \ell_n / \delta$, we have
    \[
    \left\vert \sum_{j=1}^{Z_{I_{n,k}-1, n}} \frac{\overline{\xi}_{I_{n,k},n}(j)^3}{1+\overline{\xi}_{I_{n,k},n}(j)^2} \1_{\overline{\xi}_{I_{n,k},n}(j) \le \e}  \right\vert
    \le \e \sum_{j=1}^{Z_{I_{n,k}-1, n}} \frac{\overline{\xi}_{I_{n,k},n}(j)^2}{1+\overline{\xi}_{I_{n,k},n}(j)^2} \1_{\overline{\xi}_{I_{n,k},n}(j) \le \e}  
    \le \e \sum_{i=1}^n \sum_{j=1}^{\lfloor \ell_n / \delta \rfloor} \frac{\overline{\xi}_{i,n}(j)^2}{1+ \overline{\xi}_{i,n}(j)^2} ,
    \]
    and that by \ref{hyp:A1}, the expectation of the right-hand side converges towards $\e\beta(1)/\delta$, which goes to zero as $\e \to 0$. Thus, for all $\eta>0$,
    \[
    \lim_{\e\to 0}\limsup_{n\to \infty} \P \left( \left\vert \sum_{j=1}^{Z_{I_{n,k}-1, n}} \frac{\overline{\xi}_{I_{n,k},n}(j)^3}{1+\overline{\xi}_{I_{n,k},n}(j)^2} \1_{\overline{\xi}_{I_{n,k},n}(j) \le \e}  \right\vert\ge \eta \text{ and }\forall t \in (0,1], \ X(t) \in (0,\infty) \right)=0.
    \]
    By taking \eqref{eq cv xi sur un plus xi carre petit saut} into account, using that $x= x/(1+x^2)+ x^3/(1+x^2)$, we obtain that on the event that $\forall t \in (0,1], \ X(t) \in (0,\infty)$, the second sum \eqref{deuxieme ligne increment Z} converges in probability as $n\to \infty$ and then $\e\to 0$ towards $0$. In other words, for all $\eta>0$,
    \[
    \lim_{\e\to 0} \limsup_{n\to \infty} \P\left(
    \left\vert\sum_{j=1}^{Z_{I_{n,k}-1, n}} \left(\overline{\xi}_{I_{n,k},n}(j) -\widetilde{\alpha}_{I_{n,k},n,\e} \right)\1_{\overline{\xi}_{I_{n,k},n}(j) \le \e}\right\vert \ge \eta
    \text{ and }
    \forall t \in (0,1], \ X(t) \in (0,\infty)
    \right)
    =0.
    \]
    {For \eqref{troisieme ligne increment Z}, notice that on the event that $\forall t \in (0,1], \ X(t) \in (0,\infty)$, by \eqref{eq cv GWVE} we have almost surely
    \[\widetilde{\alpha}_{I_{n,k}, n, \e} \# \{1 \le j\le Z_{I_{n,k}-1, n}; \ \overline{\xi}_{I_{n,k},n}(j) \le \e\}= \widetilde{\alpha}_{I_{n,k}, n, \e} O(\ell_n).\]
    But, by \eqref{eq alpha i n epsilon tend vers zero}, we know that $ \ell_n\alpha_{I_{n,k}, n,\e} \to 0$ a.s.\@ as $n\to \infty$. Moreover, by \ref{hyp:A1},
    \[
    \sup_{i\in [n]} \left(1-\P( \overline{\xi}_{i,n} \le \e) \right) = \sup_{i\in [n]} \P(\overline{\xi}_{i,n} >\e) \le \sum_{i=1}^n \P(\overline{\xi}_{i,n} >\e) = O\left(\frac{1}{\ell_n} \right).
    \]
    Therefore, almost surely, $\ell_n \widetilde{\alpha}_{I_{n,k}, n, \e} \to 0$ as $n\to \infty$. This proves that \eqref{troisieme ligne increment Z} goes to zero almost surely as $n\to \infty$ and then $\e \to 0$ on the event that $\forall t \in (0,1], \ X(t) \in (0,\infty)$. As a result, for all $\eta>0$,
    \[
    \lim_{\e \to 0} \limsup_{n\to \infty} \P \left( \left\vert \frac{Z_{I_{n,k}, n} - Z_{I_{n,k}-1, n}}{\ell_n} - \overline{\xi}_{I_{n,k},n}(J_{n,k}) \right \vert \ge \eta \text{ and } \forall t \in (0,1], \ X(t) \in (0,\infty)\right) =0.
    \]
    Since the above probability does not depend on $\e$, we deduce that on the event that $\forall t \in (0,1], \ X(t) \in (0,\infty)$,
    \begin{equation}\label{eq cv proba grand degre}
    \left\vert \frac{Z_{I_{n,k}, n} - Z_{I_{n,k}-1, n}}{\ell_n} - \overline{\xi}_{I_{n,k},n}(J_{n,k}) \right \vert \cvproba 0.
    \end{equation}
    So, by taking \eqref{eq cv GWVE} into account, the high degrees correspond to positive jumps of $X$. Conversely, let $t'_k\in (0,1)$ be the $k$-th positive jump of $X$ of size larger than $\delta$. Let $I'_{n,k}$ be the $k$-th time $i \in [n]$ such that $(Z_{I'_{n,k},n} - Z_{I'_{n,k}-1,n})/\ell_n >\delta$. By \eqref{eq cv GWVE}, on the event that $X$ has no jump of size exactly $\delta$, we have the convergence
    \[
    \left( \frac{I'_{n,k}}{n}, \frac{Z_{I'_{n,k}, n}- Z_{I'_{n,k}-1, n}}{\ell_n}\right)
    \cvps 
    \left(t'_k, \Delta_+ X(t'_k) \right).
    \]
    Then, we can reason as above: for all $\e>0$, one can write 
    }
    
    \begin{align*}
    \frac{Z_{I'_{n,k},n}-Z_{I'_{n,k}-1, n}}{\ell_n} =
     &\sum_{j=1}^{Z_{I'_{n,k}-1, n}} \overline{\xi}_{I'_{n,k}, n}(j) \1_{\overline{\xi}_{I'_{n,k},n}(j) >\e} \\
    &+ \sum_{j=1}^{Z_{I'_{n,k}-1, n}} \left(\overline{\xi}_{I'_{n,k},n}(j) -\widetilde{\alpha}_{I'_{n,k},n,\e} \right)\1_{\overline{\xi}_{I'_{n,k},n}(j) \le \e}
    \\
    &+ \widetilde{\alpha}_{I'_{n,k}, n, \e} \# \{1 \le j\le Z_{I'_{n,k}-1, n}; \ \overline{\xi}_{I'_{n,k},n}(j) \le \e\}.
\end{align*}
    
    For the same reasons as above the second and third line converge to zero in probability as $n\to \infty$ and then $\e \to 0$ on the event that $\forall t \in (0,1], \ X(t) \in (0,\infty)$. Moreover, by \eqref{eq pas deux gros degres a la meme hauteur} we deduce that on the event that $\forall t \in (0,1], \ X(t) \in (0,\infty)$ and that $X$ has no jump of size exactly $\delta$, almost surely, for all $n$ large enough, the first sum has only one non-zero term:
    \[
    \sum_{j=1}^{Z_{I'_{n,k}-1, n}} \overline{\xi}_{I'_{n,k}, n}(j) \1_{\overline{\xi}_{I'_{n,k},n}(j) >\e}
    =
    \overline{\xi}_{I'_{n,k}, n}(J'_{n,k})
    \]
    for some $J'_{n,k} \le Z_{I'_{n,k}-1, n}$. This proves that all the jumps of $X$ correspond to high degrees, and that for all $\delta>0$ which is not an atom of $\mu(\mathrm{d} x\times (0,1))$, almost surely $X$ does not have a jump of size $\delta$. In particular, by taking \eqref{eq cv atomes PPP} and \eqref{eq cv proba grand degre} into account, we deduce that \ref{hyp:atomes} holds in probability on the event that $\forall t \in (0,1], \ X(t) \in (0,\infty)$.}
    
    {\emph{Step 3: Proof of the assumption \eqref{eq:pas de coalescence avec proba non nulle} for $\Theta$.}} In other words, we will prove that for all $0<a<b<1$, almost surely
    \begin{equation}\label{eq somme des sauts au carre}
        \sum_{t \in [a,b]} \frac{\Delta_+ X(t)^2}{X(t)^2}<\infty.
    \end{equation}
    In order to do this, let $0<a<b<1$ and let $\e,\delta>0$. On the event {that $\forall t \in [a-\delta,b+\delta], \ \delta< X_n(t) < 1/\delta$,} we have
    \begin{align*}
        \sum_{i={\lfloor an \rfloor}}^{{\lfloor bn \rfloor}} \sum_{j=1}^{Z_{i-1, n}} \left(\frac{\xi_{i,n}(j){-1}}{Z_{i,n}}\right)^2 \1_{\xi_{i,n}(j)/ Z_{i,n} > \e} 
        &\le\sum_{i=\lfloor an\rfloor}^{\lfloor bn \rfloor} \sum_{j=1}^{\ell_n/\delta} \left(\frac{\xi_{i,n}(j){-1}}{Z_{i,n}}\right)^2 \1_{\xi_{i,n}(j)/ Z_{i,n} > \e} \\
        &\le \frac{1}{\delta^2} \sum_{i=1}^n  \sum_{j=1}^{\ell_n/\delta} \left( \overline{\xi}_{i,n}(j)^2 \wedge 1\right)\1_{\overline{\xi}_{i,n}(j) > \e \delta}.
    \end{align*}
    Furthermore, using \ref{hyp:A1} which entails that $\mu_n$ converges {vaguely} towards $\mu$, we obtain that
    \begin{align*}
        {\limsup_{n\to \infty}} \ \E \left[ \sum_{i=1}^n \sum_{j=1}^{\ell_n/\delta} \left( \overline{\xi}_{i,n}(j)^2 \wedge 1\right)\1_{\overline{\xi}_{i,n}(j) > \e \delta}\right]&={\limsup_{n\to \infty}}\int_{(0,\infty) \times (0,1]}  \frac{1}{\delta} (x^2 \wedge 1) \1_{x> \e \delta} \mu_n(\mathrm{d}x,\mathrm{d}t) \\
        &{\le} \frac{1}{\delta} \int_{(0,\infty) \times(0,1]} (x^2 \wedge 1) \1_{x\ge \e \delta} \mu(\mathrm{d}x,\mathrm{d}t).
    \end{align*}
    {Moreover, by \eqref{eq cv GWVE} {and \eqref{eq cv proba grand degre}}}, when $\mu(\{\varepsilon\} \times(0,1))=0$,
    $$
    \sum_{i=\lfloor an \rfloor}^{\lfloor bn \rfloor} \sum_{j=1}^{Z_{i-1, n}} \left(\frac{\xi_{i,n}(j){-1}}{Z_{i,n}}\right)^2 \1_{\substack{\xi_{i,n}(j)/ \ell_n \ge \e \text{  and}\\ \forall t \in [a-\delta,b+\delta], \ \delta < X_n(t)< 1/\delta}} 
    \cvproba
    \sum_{t \in [a,b]} \left(\frac{\Delta_+X(t)}{X(t)} \right)^2 \1_{\substack{\Delta_+ X(t) > \e \text{  and}\\ \forall t \in [a-\delta,b+\delta], \ \delta<X(t) < 1/\delta}}.
    $$
    Thus, {by Fatou's lemma,} for all $0<a<b<1$, for all $\delta,\e>0$ such that $\mu(\{\varepsilon\} \times(0,1))=0$,
    $$\E\left[\1_{\forall t \in [a{-\delta},b{+\delta}], \ \delta <X(t) < 1/\delta }\sum_{a \le t \le b} \frac{\Delta_+ X(t)^2}{X(t)^2} \1_{\Delta_+ X(t)/X(t) \ge \e}\right] 
    \le \frac{1}{\delta^3} \int_{(0,\infty) \times (0,1)} (x^2 \wedge 1) \mu(\mathrm{d}x,\mathrm{d}t),$$
    hence \eqref{eq somme des sauts au carre} {follows from Fatou's lemma by letting $\e \to 0$} since the upper-bound does not depend on $\e$ and since $\int_{(0,\infty)\times (0,1)} x^2\wedge 1 \mu(\mathrm{d}x,\mathrm{d}t)<\infty$ by the first point of Theorem 2.1 of \cite{BS15}.  
    
    {\emph{Step 4: Proof of \ref{hyp:coalescence}.}}  For all non-negative continuous function $f:[0,1] \to \R$ with compact support $[a,b]\subset (0,1)$,
    \[
    \sum_{i =1}^{n} f\left(\frac{i}{n}\right)\sum_{j=1}^{Z_{i-1,n}} \frac{\binom{\xi_{i,n}(j)}{2}}{\binom{Z_{i,n}}{2}} 
     = \sum_{i =1}^{n} f\left(\frac{i}{n}\right) \sum_{j=1}^{Z_{i-1,n}} \frac{(\xi_{i,n}(j)-1)^2+ \xi_{i,n}(j)-1}{2\binom{Z_{i,n}}{2}} 
     = \sum_{i =1}^{n} f\left(\frac{i}{n}\right)\sum_{j=1}^{Z_{i-1,n}} \frac{\overline{\xi}_{i,n}(j)^2+ \overline{\xi}_{i,n}(j)/\ell_n}{2\binom{Z_{i,n}}{2}/\ell_n^2}.
    \]
    Let $\e,\delta>0$, {with $\delta$ small enough that $0<a-\delta<b+\delta<1$}. Let us separate the large jumps from the small jumps. More precisely, we write
    \begin{align}
        \sum_{i =1}^{n} f(i/n) \sum_{j=1}^{Z_{i-1,n}} \frac{\overline{\xi}_{i,n}(j)^2+ \overline{\xi}_{i,n}(j)/\ell_n}{2\binom{Z_{i,n}}{2}/\ell_n^2}
        &=
        \sum_{i =1}^{n} f(i/n) \sum_{j=1}^{Z_{i-1,n}} \frac{\overline{\xi}_{i,n}(j)^2+ \overline{\xi}_{i,n}(j)/\ell_n}{2\binom{Z_{i,n}}{2}/\ell_n^2} \1_{\overline{\xi}_{i,n}(j) \ge \e} \label{ligne 1 grands sauts}\\
        &+
        \sum_{i =1}^{n} f(i/n) \sum_{j=1}^{Z_{i-1,n}} \frac{\overline{\xi}_{i,n}(j)^2}{2\binom{Z_{i,n}}{2}/\ell_n^2} \1_{\overline{\xi}_{i,n}(j)<\e} \label{ligne 2 petits sauts}\\
        &+\sum_{i =1}^{n} f(i/n) \sum_{j=1}^{Z_{i-1,n}} \frac{  \overline{\xi}_{i,n}(j)/\ell_n}{2\binom{Z_{i,n}}{2}/\ell_n^2}\1_{\overline{\xi}_{i,n}(j)<\e}\label{ligne 3 petits sauts}.
    \end{align}
    We first focus on the large jumps{, i.e.\@ on the right-hand side of \eqref{ligne 1 grands sauts}}. By \eqref{eq cv GWVE} and \eqref{eq cv proba grand degre}, and on the event ``$\e$ is not the value of a positive jump of $X$'', we have
    \begin{equation}\label{eq coalescence grands sauts}
        \sum_{i =1}^{n} f(i/n) \sum_{j=1}^{Z_{i-1,n}} \frac{\overline{\xi}_{i,n}(j)^2+  \overline{\xi}_{i,n}(j)/\ell_n}{2\binom{Z_{i,n}}{2}/\ell_n^2} \1_{\overline{\xi}_{i,n}(j) \ge \e}
        \cvproba[n]
        \sum_{t \in (0,1)} f(t) \frac{\Delta_+ X(t)^2}{X(t)^2} \1_{\Delta_+ X(t)\ge \e}.
    \end{equation}
    We then focus on the small jumps. We first focus on {\eqref{ligne 2 petits sauts}}. Let us write
    $$
    G_{n,\e}\coloneqq \sum_{i =1}^{n} f(i/n) \sum_{j=1}^{Z_{i-1,n}} \frac{\overline{\xi}_{i,n}(j)^2/(1+\overline{\xi}_{i,n}(j)^2)}{2\binom{Z_{i{-1},n}}{2}/\ell_n^2} \1_{\overline{\xi}_{i,n}(j)<\e} .
    $$
    Then,
    $$
    G_{n,\e} \le \sum_{i =1}^{n} f(i/n)\sum_{j=1}^{Z_{i-1,n}} \frac{\overline{\xi}_{i,n}(j)^2}{2\binom{Z_{i{-1},n}}{2}/\ell_n^2} \1_{\overline{\xi}_{i,n}(j) < \e}
    \le (1+\e^2)G_{n,\e}.
    $$
    Moreover, if for all $i \in[n]$, we set $\widetilde{\beta}_{i,n,\e} = \E[\overline{\xi}_{i,n}^2/(1+\overline{\xi}_{i,n}^2) \1_{\overline{\xi}_{i,n} <\e}]$, then 
    \begin{align*}
    \E&\left[\left(G_{n,\e} - \sum_{i =1}^{n} f(i/n) Z_{i-1,n} \frac{\widetilde{\beta}_{i,n,\e}}{2\binom{Z_{i{-1},n}}{2}/\ell_n^2}  \right)^2 \1_{\forall t\in [a-\delta,b+\delta], \ \delta \le X_n(t) \le 1/\delta}\right]\\
    &{
    =\E\left[ \left(\sum_{i=1}^n f(i/n) \sum_{j=1}^{Z_{i-1, n}} \frac{(\overline{\xi}_{i,n}(j)^2/(1+\overline{\xi}_{i,n}(j)^2))\1_{\overline{\xi}_{i,n}(j)<\e} - \widetilde{\beta}_{i,n,\e}}{2\binom{Z_{i{-1},n}}{2}/\ell_n^2} \right)^2 \1_{\forall t\in [a-\delta,b+\delta], \ \delta \le X_n(t) \le 1/\delta} \right] }\\
    &{\le
    \E\left[ \left(\sum_{i=1}^n f(i/n) \sum_{j=1}^{Z_{i-1, n}} \frac{(\overline{\xi}_{i,n}(j)^2/(1+\overline{\xi}_{i,n}(j)^2))\1_{\overline{\xi}_{i,n}(j)<\e} - \widetilde{\beta}_{i,n,\e}}{2\binom{Z_{i{-1},n}}{2}/\ell_n^2} \1_{\ell_n \delta \le Z_{i-1,n}\le \ell_n/\delta}\right)^2  \right] }\\
    &{
    =\E\left[
    \sum_{i=1}^n f(i/n)^2 \sum_{j=1}^{Z_{i-1, n}} \frac{((\overline{\xi}_{i,n}(j)^2/(1+\overline{\xi}_{i,n}(j)^2))\1_{\overline{\xi}_{i,n}(j)<\e} - \widetilde{\beta}_{i,n,\e})^2}{\left(2\binom{Z_{i{-1},n}}{2}/\ell_n^2 \right)^2}
    \1_{\ell_n \delta \le Z_{i-1,n}\le \ell_n/\delta} \right]
    }\\
    &{\le
    \E\left[
    \sum_{i=1}^n f(i/n)^2 \sum_{j=1}^{Z_{i-1, n}} \frac{((\overline{\xi}_{i,n}(j)^2/(1+\overline{\xi}_{i,n}(j)^2))\1_{\overline{\xi}_{i,n}(j)<\e})^2}{\left(2\binom{Z_{i{-1},n}}{2}/\ell_n^2 \right)^2}
    \1_{\ell_n \delta \le Z_{i-1,n}\le \ell_n/\delta} \right]
    },
    \end{align*}
    {where in the third line, we used the fact that $f$ has compact support $[a,b]${ and $X_{n}(t)=Z_{\lfloor nt\rfloor ,n}/\ell_n$},  in the fourth line we developed the square and used the independence of the $\overline{\xi}_{i,n}(j)$'s conditionally on $Z_{i-1,n}$ and in the fifth line we used the fact that the variance is smaller or equal to the second moment. Next, the above expression is smaller or equal to }
    \begin{align*}
    {\max(f)^2
    \E\left[
    \sum_{i=1}^n  \sum_{j=1}^{Z_{i-1, n}} \frac{\e^2(\overline{\xi}_{i,n}(j)^2/(1+\overline{\xi}_{i,n}(j)^2))\1_{\overline{\xi}_{i,n}(j)<\e}}{\delta^4}
    \1_{\ell_n \delta \le Z_{i-1,n}\le \ell_n/\delta} \right]
    }
    &\le
    \max(f)^{2} \frac{\ell_n}{\delta} \sum_{i=1}^n \frac{\beta_{i,n} \e^2}{\delta^{4}}\\
    &\cv[n]
    2\max(f)^{2} \frac{\e^2}{\delta^{5}} \beta(1),
    \end{align*}
    where the convergence comes from \ref{hyp:A1}.

    Furthermore, if for all $t \in [0,1]$, we set 
    $$\widetilde{\beta}_{n,\e}(t)  = \frac{1}{2}\ell_n \sum_{i=1}^{\lfloor nt \rfloor} \widetilde{\beta}_{i,n,\e}\qquad \text{and} \qquad \widetilde{\beta}_\e(t) = \beta(t) - \frac{1}{2}\int_{(\e,\infty)\times(0,t)} \frac{x^2}{1+x^2} \mu(\mathrm{d}x,\mathrm{d}t),$$
    then by \ref{hyp:A1}, we know that $\widetilde{\beta}_{n,\e}(\mathrm{d}t)$ converges {vaguely on $((0,1),\mathcal{B}((0,1)))$} towards $\widetilde{\beta}_\e(\mathrm{d}t)$ {for all $\e>0$ which is not an atom of $\mu( \mathrm{d} x \times (0,1))$}. As a consequence, by \eqref{eq cv GWVE}, since $\widetilde{\beta}_\e$ converges {vaguely on $((0,1),\mathcal{B}((0,1)))$} towards $\widetilde{\beta}$ as $\e \to 0$ {and since $\widetilde{\beta}(\mathrm{d} t)$ has no atoms},
    $$
    \sum_{i=1}^n f(i/n) Z_{i-1,n} \frac{\widetilde{\beta}_{i,n,\e}}{2\binom{Z_{i{-1},n}}{2}/\ell_n^2}
    = \int_0^1 f\left( t\right) \frac{Z_{\lfloor nt \rfloor-1,n}}{\ell_n} \frac{2\widetilde{\beta}_{n,\e}(\mathrm{d}t)}{2\binom{Z_{\lfloor nt \rfloor{-1},n}}{2}/\ell_n^2}
    {\mathop{\longrightarrow}\limits_{n\to \infty, \e \to 0}^{\mathrm{a.s.}}\int_0^1 f(t) \frac{2\widetilde{\beta} (\mathrm{d}t)}{X(t)},}
    $$
    {in the sense that the limit as $\e \to 0$ of the limsup as $n\to \infty$ of the difference is a.s.\@ zero.}
    Thus, we deduce that, on the event ``$\forall t \in (0,1], \ X(t) \in (0, \infty)$'',
    \begin{equation}\label{eq cv petits sauts1}
        \sum_{i=1}^n f(i/n) \sum_{j=1}^{Z_{i-1,n}} \frac{\overline{\xi}_{i,n}(j)^2}{2\binom{Z_{i{-1},n}}{2}/\ell_n^2} \1_{\overline{\xi}_{i,n}(j)<\e}
        \mathop{\longrightarrow}\limits_{n\to \infty, \e \to 0}^\P\int_0^1 f(t) \frac{2\widetilde{\beta} (\mathrm{d}t)}{X(t)},
    \end{equation}
{in the sense that for all $\eta>0$, the limsup as $n\to \infty$ of the probability that the difference is larger in absolute value than $\eta$ goes to zero as $\e \to 0$.}
    {
    But we want the above convergence with a $Z_{i,n}$ instead of $Z_{i-1,n}$. To that aim, let us write
    \begin{equation}\label{eq def rho n epsilon}
    \widetilde{\rho}_{n,\e}(\mathrm{d}t)\coloneqq \sum_{i=1}^n \delta_{i/n}(\mathrm{d} t) \sum_{j=1}^{Z_{i-1,n}} \frac{\overline{\xi}_{i,n}(j)^2}{2\binom{Z_{i{-1},n}}{2}/\ell_n^2} \1_{\overline{\xi}_{i,n}(j)<\e}.
    \end{equation}
    Then \eqref{eq cv petits sauts1} implies the vague convergence in probability on $(0,1)$ of $\widetilde{\rho}_{n,\e}(\mathrm{d}t)$ towards $2 \widetilde{\beta}(\mathrm{d} t)/ X(t)$ as $n\to \infty$ and then $\e \to 0$. 
    Let $
    g_n(t) \coloneqq 
    f(t) {\binom{Z_{\lfloor nt\rfloor-1, n}}{2}}/{\binom{Z_{\lfloor n t\rfloor, n}}{2}}
    $. On the event ``$\forall t \in (0,1], \ X(t) \in (0, \infty)$'', the sequence of random functions $g_n$ with compact support $[a,b]$ converges a.s.\@ for the $J_1$ topology of Skorokhod towards $g(t) \coloneqq f(t) {X(t-)^2}/{X(t)^2}$. So, by Lemma \ref{lemme technique cv skorokhod},
    \begin{equation}\label{eq cv integrale gn rho n epsilon}
        \int_0^1 g_n(t) \widetilde{\rho}_{n,\e}(\mathrm{d}t) \mathop{\longrightarrow}\limits_{n\to \infty, \e \to 0}^{\mathrm{\P}} \int_0^1 g(t) \frac{2 \widetilde{\beta}(\mathrm{d}t)}{X(t)} = \int_0^1 f(t) \frac{X(t-)^2}{X(t)^2}\frac{2 \widetilde{\beta}(\mathrm{d}t)}{X(t)} = \int_0^1 f(t) \frac{2 \widetilde{\beta}(\mathrm{d}t)}{X(t)},
    \end{equation}
    where in the last equality, we use the facts that $\widetilde{\beta}$ has no atoms and the fact that for every $t$ except for a random countable subset we have $X(t-)=X(t)$.
    In other words,
    \begin{equation}\label{eq cv petits sauts}
        \sum_{i=1}^n f(i/n) \sum_{j=1}^{Z_{i-1,n}} \frac{\overline{\xi}_{i,n}(j)^2}{2\binom{Z_{i,n}}{2}/\ell_n^2} \1_{\overline{\xi}_{i,n}(j)<\e}
        \mathop{\longrightarrow}\limits_{n\to \infty, \e \to 0}^\P\int_0^1 f(t) \frac{2\widetilde{\beta} (\mathrm{d}t)}{X(t)}.
    \end{equation}}
    Let us then show that {\eqref{ligne 3 petits sauts} is negligible, i.e.\@ that}
    \begin{equation}\label{eq reste tend vers zero}
    \sum_{i =1}^{n} f(i/n) \sum_{j=1}^{Z_{i-1,n}} \frac{  \overline{\xi}_{i,n}(j)/\ell_n}{2\binom{Z_{i,n}}{2}/\ell_n^2}\1_{\overline{\xi}_{i,n}(j)<\e} 
    \mathop{\longrightarrow}\limits_{n\to \infty, \e \to 0}^{\P} 0.
    \end{equation}
    In order to show \eqref{eq reste tend vers zero}, let us first show that 
    \begin{equation}\label{eq reste tend vers zero2}
    \sum_{i =1}^{n} f(i/n) \sum_{j=1}^{Z_{i-1,n}} \frac{  \overline{\xi}_{i,n}(j)/\ell_n}{2\binom{Z_{i-1,n}}{2}/\ell_n^2}\1_{\overline{\xi}_{i,n}(j)<\e} 
    \cvproba[n] 0.
    \end{equation}
    Using that {$(1+1/\ell_n^2) \overline{\xi}_{i,n}(j) \le (1+\overline{\xi}_{i,n}(j)^2) \overline{\xi}_{i,n}(j)$ and that} for all $x\le \e$, we have $x\le x/(1+x^2)+ \e x^2/(1+x^2)$, we {bound}
    \begin{align}
    &\frac{1+1/\ell_n^2}{\ell_n}\sum_{i =1}^{n} f(i/n) \sum_{j=1}^{Z_{i-1,n}} \frac{  \overline{\xi}_{i,n}(j)/(1+\overline{\xi}_{i,n}(j)^2)}{2\binom{Z_{i{-1},n}}{2}/\ell_n^2}\1_{\overline{\xi}_{i,n}(j)<\e}\le{
    \sum_{i =1}^{n} f(i/n) \sum_{j=1}^{Z_{i-1,n}} \frac{  \overline{\xi}_{i,n}(j)/\ell_n}{2\binom{Z_{i-1,n}}{2}/\ell_n^2}\1_{\overline{\xi}_{i,n}(j)<\e} } \notag \\
    &\le \frac{1}{\ell_n}\sum_{i =1}^{n} f(i/n) \sum_{j=1}^{Z_{i-1,n}} \frac{  \overline{\xi}_{i,n}(j)/(1+\overline{\xi}_{i,n}(j)^2)}{2\binom{Z_{i{-1},n}}{2}/\ell_n^2}\1_{\overline{\xi}_{i,n}(j)<\e} + \frac{\e}{\ell_n}\sum_{i =1}^{n} f(i/n) \sum_{j=1}^{Z_{i-1,n}} \frac{  \overline{\xi}_{i,n}(j)^2/(1+\overline{\xi}_{i,n}(j)^2)}{2\binom{Z_{i{-1},n}}{2}/\ell_n^2}\1_{\overline{\xi}_{i,n}(j)<\e}.
    \label{encadrement somme petits sauts}
    \end{align}
    By \eqref{eq cv petits sauts1} and since $\ell_n \to \infty$, we know that the above term on the right goes to zero as $n\to \infty$ in probability.

    Moreover,
\begin{align*}
\Bigg\vert\E\left[\frac{1}{\ell_n}\sum_{i =1}^{n} { \1_{\delta \ell_n \le Z_{i-1,n}\le \ell_n/\delta} }f(i/n) \sum_{j=1}^{Z_{i-1,n}} \frac{  \overline{\xi}_{i,n}(j)/(1+\overline{\xi}_{i,n}(j)^2)}{2\binom{Z_{i{-1},n}}{2}/\ell_n^2} \right] \Bigg\vert &=\left\vert \E\left[\frac{1}{\ell_n}\sum_{i =1}^{n} { \1_{\delta \ell_n \le Z_{i-1,n}\le \ell_n/\delta} }f(i/n) {Z_{i-1,n}} \frac{  \alpha_{i,n} }{2\binom{Z_{i{-1},n}}{2}/\ell_n^2}\right] \right\vert\\
& \le \E\left[\frac{1}{\ell_n}\sum_{i =1}^{n} { \1_{\delta \ell_n \le Z_{i-1,n}\le \ell_n/\delta} }f(i/n) {Z_{i-1,n}} \frac{  \vert \alpha_{i,n} \vert }{2\binom{Z_{i{-1},n}}{2}/\ell_n^2}\right] \\
&\le\frac{1}{\ell_n} \max(f) \frac{1}{\delta} \frac{\lVert \alpha_n \rVert(1)}{\delta^2} \cv[n] 0,
\end{align*}
{where the last inequality comes from the fact that $f$ has compact support $[a,b]$} where the convergence stems from \ref{hyp:A1} and from the fact that $\ell_n \to \infty$. 

{Furthermore, we bound from above the variance as follows:
\begin{align*}
    \E&\left[\left( \frac{1}{\ell_n}\sum_{i=1}^n f(i/n) \sum_{j=1}^{Z_{i-1,n}} \frac{\overline{\xi}_{i,n}(j)/(1+ \overline{\xi}_{i,n}(j)^2) - \alpha_{i,n}}{2 \binom{Z_{i-1,n}}{2}/ \ell_n^2} \1_{\delta \ell_n \le Z_{i-1,n} \le \ell_n/ \delta}\right)^2 \right]\\
    &\le \frac{1}{\ell_n^2} \E\left[\sum_{i=1}^n f(i/n)^2 \sum_{j=1}^{Z_{i-1,n}} \frac{\left(\overline{\xi}_{i,n}(j)/(1+ \overline{\xi}_{i,n}(j)^2) - \alpha_{i,n}\right)^2}{4 \binom{Z_{i-1,n}}{2}^2/ \ell_n^4} \1_{\delta \ell_n \le Z_{i-1,n} \le \ell_n/ \delta} \right]\\
    &\le \frac{1}{\ell_n^2} \max(f)^2\E\left[ \sum_{i=1}^n \sum_{j=1}^{Z_{i-1,n}} \frac{\overline{\xi}_{i,n}(j)^2/(1+ \overline{\xi}_{i,n}(j)^2)^2}{4 \binom{Z_{i-1,n}}{2}^2/ \ell_n^4} \1_{\delta \ell_n \le Z_{i-1,n} \le \ell_n/ \delta} \right]\\
    &\le \frac{1}{\ell_n^2} \max(f)^2\E\left[ \sum_{i=1}^n \sum_{j=1}^{Z_{i-1,n}} \frac{\overline{\xi}_{i,n}(j)^2/(1+ \overline{\xi}_{i,n}(j)^2)}{4 \binom{Z_{i-1,n}}{2}^2/ \ell_n^4} \1_{\delta \ell_n \le Z_{i-1,n} \le \ell_n/ \delta} \right]\\
    &\le \frac{1}{\ell_n^2} \max(f)^2 \E\left[ \sum_{i=1}^n \frac{\ell_n}{\delta} \frac{\beta_{i,n}}{\delta^4}\right]\\
    &=O\left( \frac{1}{\ell_n^2} \right).
\end{align*}
Therefore, by letting $\delta\to 0$, on the event ``$\forall t \in (0,1], \ X(t) \in (0,\infty)$'',
\[
\frac{1}{\ell_n}\sum_{i =1}^{n}  f(i/n) \sum_{j=1}^{Z_{i-1,n}} \frac{  \overline{\xi}_{i,n}(j)/(1+\overline{\xi}_{i,n}(j)^2)}{2\binom{Z_{i{-1},n}}{2}/\ell_n^2}
\cvproba 0.
\]
Besides, by \eqref{eq cv GWVE} and \eqref{eq cv proba grand degre}, we know that for all $\e>0$ which is not the size of a positive jump of $X$:
\[
\sum_{i =1}^{n}  f(i/n) \sum_{j=1}^{Z_{i-1,n}} \frac{  \overline{\xi}_{i,n}(j)/(1+\overline{\xi}_{i,n}(j)^2)}{2\binom{Z_{i{-1},n}}{2}/\ell_n^2} \1_{\overline{\xi}_{i,n}(j)\ge\e}
\cvproba \sum_{t \in (0,1)} f(t) \frac{\Delta_+X(t)/(1+\Delta_+X(t)^2)}{X(t)^2} \1_{\Delta_+ X(t) \ge \e},
\]
so that
\[
\frac{1}{\ell_n}\sum_{i =1}^{n}  f(i/n) \sum_{j=1}^{Z_{i-1,n}} \frac{  \overline{\xi}_{i,n}(j)/(1+\overline{\xi}_{i,n}(j)^2)}{2\binom{Z_{i{-1},n}}{2}/\ell_n^2} \1_{\overline{\xi}_{i,n}(j)\ge\e}
\cvproba 0.
\]
Thus, 
\[
\frac{1}{\ell_n}\sum_{i =1}^{n}  f(i/n) \sum_{j=1}^{Z_{i-1,n}} \frac{  \overline{\xi}_{i,n}(j)/(1+\overline{\xi}_{i,n}(j)^2)}{2\binom{Z_{i{-1},n}}{2}/\ell_n^2} \1_{\overline{\xi}_{i,n}(j) < \e}
\cvproba 0.
\]
Thanks to \eqref{encadrement somme petits sauts}, we deduce \eqref{eq reste tend vers zero2}.

Next, we need to check that \eqref{eq reste tend vers zero2} implies \eqref{eq reste tend vers zero}. Note that since $\vert \overline{\xi}_{i,n}(j)/\ell_n \vert \le \overline{\xi}_{i,n}(j)^2$, we have
\begin{align*}
    &\left\vert\sum_{i=1}^n f(i/n) \sum_{j=1}^{Z_{i-1,n}} \frac{\overline{\xi}_{i,n}(j)}{\ell_n} \1_{\overline{\xi}_{i,n}(j)<\e} \left( \frac{1}{2 \binom{Z_{i-1,n}}{2}/\ell_n^2}- \frac{1}{2 \binom{Z_{i,n}}{2}/\ell_n^2}\right) \right\vert\\
    &\le \sum_{i=1}^n f(i/n) \sum_{j=1}^{Z_{i-1,n}} \overline{\xi}_{i,n}(j)^2 \1_{\overline{\xi}_{i,n}(j)<\e} \left\vert \frac{1}{2 \binom{Z_{i-1,n}}{2}/\ell_n^2}- \frac{1}{2 \binom{Z_{i,n}}{2}/\ell_n^2}\right\vert= \int_0^1 r_n(t) \widetilde{\rho}_{n,\e}(\mathrm{d}t),
\end{align*}
where we recall that $\widetilde{\rho}_{n,\e}$ is defined in \eqref{eq def rho n epsilon} and we set for all $t \in [0,1]$,
\[r_n(t)\coloneqq f(t) \left\vert 1- \frac{\binom{Z_{\lfloor nt \rfloor -1, n}}{2}} {\binom{Z_{\lfloor nt \rfloor , n}}{2}}\right\vert.\]
Then, by the same reasoning as in \eqref{eq cv integrale gn rho n epsilon}, we obtain the convergence
\[
\int_0^1 r_n(t) \widetilde{\rho}_{n,\e} (\mathrm{d}t) \mathop{\longrightarrow}\limits_{n\to \infty, \e \to 0}^\P 0.
\]
Hence,
\[\left\vert\sum_{i=1}^n f(i/n) \sum_{j=1}^{Z_{i-1,n}} \frac{\overline{\xi}_{i,n}(j)}{\ell_n} \1_{\overline{\xi}_{i,n}(j)<\e} \left( \frac{1}{2 \binom{Z_{i-1,n}}{2}/\ell_n^2}- \frac{1}{2 \binom{Z_{i,n}}{2}/\ell_n^2}\right) \right\vert
\mathop{\longrightarrow}\limits_{n\to \infty, \e \to 0}^\P 0.
\]
Thus, we have shown \eqref{eq reste tend vers zero}.
}

Combining \eqref{eq coalescence grands sauts}, \eqref{eq cv petits sauts}, \eqref{eq reste tend vers zero}, we conclude that on the event ``$\forall t \in (0,1], \ X(t) \in (0,\infty)$'',
    $$
    \sum_{i =1}^{n} f(i/n)\sum_{j=1}^{Z_{i-1,n}} \frac{\binom{\xi_{i,n}(j)}{2}}{\binom{Z_{i,n}}{2}} \cvproba[n]\sum_{t\in (0,1)} f(t) \frac{\Delta_+ X(t)^2}{X(t)^2} + \int_0^1 f(t) \frac{2 \widetilde{\beta}(\mathrm{d}t)}{X(t)}.
    $$
    This proves \ref{hyp:coalescence}.
    
    {\emph{Step 5: Checking \ref{hyp:tightGP} and \ref{hyp:tightGHP}.}} The assumption that for all $0<a<b<1$, we have $\int_{(0,\infty)\times[a,b]} x \mu(\mathrm{d}x,\mathrm{d}t) = \infty$ or ${\widetilde{\beta}}(b)-{\widetilde{\beta}}(a)>0$ entails readily \ref{hyp:tightGP}. Indeed, when $\int_{(0,\infty)\times[a,b]} x \mu(\mathrm{d}x,\mathrm{d}t) = \infty$, almost surely, for all $\delta>0$, we have $
    \sum_{a\le t \le b} \Xi(t) \1_{Y(t) \le \delta}=\infty
    $, where $(\Xi(t), Y(t))_{t\ge 0}$ is the Poisson point process introduced in \eqref{eq cv PPP degres}, so that on the event ``$\forall t \in (0,1], \ X(t) \in (0,\infty)$'', a.s.
    \[
    \sum_{a \le t \le b} \Xi(t) \1_{Y(t) \le X(t)} = \infty.
    \]
    Thus, on the event ``$\forall t \in (0,1], \ X(t) \in (0,\infty)$'', a.s.
    \[
    \sum_{a \le t \le b} \frac{\Delta_+ X(t)}{X(t)} = \infty.
    \] 
    Finally, under the assumption \eqref{eq hypothese GWVE tight GHP}, for all $\delta\in (0,1)$, the condition \ref{hyp:tightGHP} is satisfied with probability at least $1-\delta$. 
\end{proof}

\bibliographystyle{alpha}
\bibliography{bibli}

\appendix

\section{Background on the GP, GH and GHP topologies}
\label{sec:background}
\subsection{The Gromov--Prokhorov (GP) topology} \label{GPdef}
A measured metric space is a triple $(X,d,\mu)$ such that $(X,d)$ is a Polish space and $\mu$ is a Borel probability measure on $X$. Two such spaces $(X,d,\mu)$, $(X',d',\mu')$ are called GP-isometry-equivalent if and only if there exists an isometry $f:\supp(\mu)\to \supp(\mu')$ such that if $f_\star \mu$ is the image of $\mu$ by $f$ then $f_\star \mu=\mu'$. 
Let $\mathbb{K}_{\GP}$ be the set of GP-equivalent classes of measured metric spaces. Given a measured metric space $(X,d,\mu)$, we write $[X,d,\mu]$ for the GP-isometry-equivalence class of $(X,d,\mu)$ and frequently use the notation $X$ for either $(X,d,\mu)$ or $[X,d,\mu]$.

We now recall the definition of the Prokhorov distance. Consider a metric space $(X,d)$. For every $A\subset X$ and $\e>0$ let $A^\e\coloneqq \{x\in X, d(x,A)<\e\}$ be the open $\varepsilon$-neighborhood of $A$. Then given two (Borel) probability measures $\mu$, $\nu$ on $X$, the Prokhorov distance between $\mu$ and $\nu$ is defined by 
\[d_P(\mu, \nu)\coloneqq \inf\{\text{ $\e>0$: $\mu(A)\leq \nu (A^\e)+\e$ and $\nu(A)\leq  \mu(A^\e)+\e$, for all Borel set $A\subset X$} \}.\]

The  Gromov--Prokhorov (for short GP) distance is an extension of the Prokhorov's distance: For every $[X,d,\mu],[X',d',\mu']\in \K_{\GP}$ the Gromov--Prokhorov distance between $X$ and $X'$ is defined by
\[ d_{\GP}([X,d,\mu],[X',d',\mu'])\coloneqq\inf_{S,\phi,\phi'} d_P(\phi_\star \mu, \phi'_\star\mu'),\]
where the infimum is taken over all metric spaces $S$ and isometric embeddings $\phi :X\to S$, $\phi' :X'\to S$. $d_{\GP}$ is indeed a distance on $\K_{\GP}$ and $(\K_{\GP},d_{\GP})$ is a Polish space (see e.g. \cite{ADH13}).

We use another convenient characterization of the GP topology using the convergence of distance matrices: For every measured metric space  $(X,d^X,\mu^X)$ let $(x_i^X)_{i\in \N}$ be a sequence of i.i.d. random variables of common distribution $\mu^X$ and let $M^X\coloneqq(d^X(x_i^X,x_j^X))_{i,j\in \N}$. We have the following result from \cite{Loh13},

\begin{lemma} \label{equivGP} Let $(X^n)_{n\in \N} \in \K_{\GP}^\N$ and let $X\in \K_{\GP}$ then $X^n\limit^{\GP}X$ as $n\to \infty$ if and only if $M^{X^n}$ converges in distribution toward $M^X$ for the product topology.
\end{lemma}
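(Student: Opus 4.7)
The plan is to prove the two implications using standard techniques: the forward direction via Strassen coupling, and the reverse via the Gromov reconstruction theorem combined with a tightness argument in $(\K_{\GP},d_{\GP})$.

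For the direction $(\Rightarrow)$, assume $X^n \to X$ in GP. By definition of $d_{\GP}$, for each $n$ I can pick a common metric space $S_n$ and isometric embeddings $\phi^n \colon X^n \hookrightarrow S_n$ and $\phi \colon X \hookrightarrow S_n$ so that $\e_n \coloneqq d_P(\phi^n_\star \mu^{X^n}, \phi_\star \mu^X) + 1/n \to 0$. By Strassen's theorem I can couple a pair $(Y^n,Z)$ with marginals $\phi^n_\star \mu^{X^n}$ and $\phi_\star \mu^X$ so that $\P(d_{S_n}(Y^n,Z)>\e_n)\le \e_n$; taking i.i.d.\ copies, I obtain a coupling of the two sequences $(x_i^{X^n})_{i\ge 1}$ and $(x_i^X)_{i\ge 1}$ in which each pair is close in $S_n$ with probability $\ge 1-\e_n$. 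The triangle inequality then gives $|d^{X^n}(x_i^{X^n},x_j^{X^n}) - d^X(x_i^X,x_j^X)|\le 2\e_n$ on an event of probability $\ge 1-k^2\e_n$ once one restricts to the first $k$ indices, which yields convergence in probability of every truncation $M^{X^n}_{[k]\times[k]}$ to $M^X_{[k]\times[k]}$, and hence convergence in distribution in the product topology.

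For the direction $(\Leftarrow)$, I would proceed by a tightness-plus-uniqueness argument. The uniqueness ingredient is the \emph{Gromov reconstruction theorem}: the law of the random matrix $M^X$ determines $[X,d^X,\mu^X] \in \K_{\GP}$. Granting this, the strategy is: from any subsequence of $(X^n)$, extract by tightness a further subsequence converging in $d_{\GP}$ to some $Y$; applying the already proved implication $(\Rightarrow)$ gives $M^{X^{n_j}} \to M^Y$ in distribution, while by hypothesis $M^{X^{n_j}} \to M^X$, so $M^Y \stackrel{d}{=} M^X$, hence $[Y]=[X]$ by reconstruction, which forces $X^n \to X$ in GP.

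The main obstacle is establishing tightness of $(X^n)$ in $(\K_{\GP},d_{\GP})$. The criterion I would use is the Gromov--Prokhorov analogue of Prokhorov's theorem: $\{X^n\}$ is relatively compact iff for every $\e>0$ one can find $k$ such that $\sup_n \P\bigl(d^{X^n}(x_{k+1}^{X^n},\{x_1^{X^n},\dots,x_k^{X^n}\})>\e\bigr) < \e$, i.e.\ the sequence is uniformly ``leaf-tight'' in the sense of Appendix \ref{sec:leaf}. For the limit $X\in \K_{\GP}$ this quantity tends to $0$ as $k\to\infty$ by an application of Borel--Cantelli to the i.i.d.\ sample $(x_i^X)$ (any measured Polish space is leaf-tight for a single i.i.d.\ sequence). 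The assumed convergence $M^{X^n}\to M^X$ in the product topology transfers this control to $X^n$ for all $n$ large enough, which combined with finitely many additional $X^n$ gives the required uniform leaf-tightness, hence tightness in $\K_{\GP}$. With tightness in hand, the subsequence argument above closes the proof.
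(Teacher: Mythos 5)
The paper does not prove this lemma; it is quoted directly from L\"ohr's work (reference \cite{Loh13}, and it goes back to Greven--Pfaffelhuber--Winter), so there is no in-paper argument to compare against. Your ``$\Rightarrow$'' direction via Strassen coupling on a near-optimal common ambient space is sound, and the reconstruction-plus-subsequence scheme for ``$\Leftarrow$'' is the right skeleton. However, the relative-compactness criterion you invoke is stated incorrectly, and that is a genuine gap.

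Concretely, you claim that $\{X^n\}\subset \K_{\GP}$ is relatively compact if (and only if) for every $\e>0$ there is a $k$ with $\sup_n \P\bigl(d^{X^n}(x_{k+1}^{X^n},\{x_1^{X^n},\dots,x_k^{X^n}\})>\e\bigr)<\e$. This is false: take $X^n$ to be the two-point space $\{0,n\}\subset\R$ with the uniform measure. For any fixed $\e>0$ and $n>\e$, $\P\bigl(d^{X^n}(x_{k+1},\{x_1,\dots,x_k\})>\e\bigr)=2^{-k}$, so your uniform ``leaf-tightness'' holds with $k$ independent of $n$; yet $d_{\GP}(X^n,X^m)\ge 1/4$ whenever $|n-m|$ is large, so $\{X^n\}$ has no convergent subsequence. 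The correct precompactness criterion for the Gromov--weak topology (Greven--Pfaffelhuber--Winter) has \emph{two} conditions: the mass-distribution control you stated \emph{and} tightness of the family of distance distributions $\{\text{law of }d^{X^n}(x_1^{X^n},x_2^{X^n})\}_n$ on $\R_+$; your argument omits the latter. In the context of the lemma the omission is repairable, because the hypothesis $M^{X^n}\to M^X$ in distribution automatically makes the laws of $M^{X^n}_{1,2}$ tight, so both ingredients are available --- but you need to say so explicitly, and you should also acknowledge that the corrected precompactness criterion is itself a nontrivial theorem you are importing rather than proving. (A smaller quibble: for the single-space leaf-tightness you invoke Borel--Cantelli, whereas the cleaner route is to write $\P\bigl(d(x_{k+1},\{x_1,\dots,x_k\})>\e\bigr)=\int (1-\mu(\bar B_\e(x)))^k\,\mu(\mathrm{d}x)$ and use dominated convergence together with $\mu(\bar B_\e(x))>0$ for $\mu$-a.e.\ $x$, which holds because a Polish mm-space has full support on $\supp\mu$.)
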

\subsection{The Gromov--Hausdorff (GH) topology} \label{GH}
Let $\K_{\GH}$ be the set of isometry-equivalent classes of compact metric spaces. For every metric space $(X,d)$, we write $[X,d]$ for the isometry-equivalence class of $(X,d)$, and frequently write $X$ for either $(X,d)$ or $[X,d]$. 

For every metric space $(X,d)$, the Hausdorff distance between $A,B\subset X$ is given by
\[d_H(A,B)\coloneqq \inf\{\e>0, A\subset B^\e, B\subset A^\e \}. \]
The Gromov--Hausdorff distance between $[X,d]$,$[X',d']\in \K_{\GH}$ is given by 
\[ d_{\GH}([X,d],[X',d'])\coloneqq\inf_{S,\phi,\phi'} \left (d_H(\phi(X), \phi'(X')) \right ),\]
where the infimum is taken over all metric spaces $S$ and isometric embeddings $\phi :X\to S$, $\phi' :X'\to S$. $d_{\GH}$ is indeed a distance on $\K_{\GH}$ and $(\K_{\GH},d_{\GH})$ is a Polish space (see e.g.  \cite{ADH13}).

\subsection{The Gromov--Hausdorff--Prokhorov (GHP) topology}
Two measured metric spaces $(X,d,\mu)$, $(X',d',\mu')$ are called GHP-isometry-equivalent if and only if there exists an isometry $f:X\to X'$ such that if $f_\star \mu$ is the image of $\mu$ by $f$ then $f_\star \mu=\mu'$.
Let $\K_{\GHP}\subset\K_{\GP}$ be the set of isometry-equivalence classes of compact measured metric spaces.

The Gromov--Hausdorff--Prokhorov distance between $[X,d,\mu]$,$[X',d',\mu']\in \K_{\GHP}$ is given by 
\[ d_{\GHP}([X,d,\mu],[X',d',\mu'])\coloneqq\inf_{S,\phi,\phi'} \left ( d_P(\phi_\star \mu, \phi'_\star\mu')+d_H(\phi(X), \phi'(X')) \right ),\]
where the infimum is taken over all metric spaces $S$ and isometric embeddings $\phi :X\to S$, $\phi' :X'\to S$. $d_{\GHP}$ is indeed a distance on $\K_{\GHP}$ and $(\K_{\GHP},d_{\GHP})$ is a Polish space (see  \cite{ADH13}).
\section{Leaf-tightness criteria}
\label{sec:leaf}
In this section, $\mathbf X=((X^n,d^n,p^n))_{n\in \N}$ denotes  a sequence of random compact measured metric spaces GHP-measurable. For every $n\in \N$, let $(x_i^n)_{i\in \N}$ be a sequence of i.i.d.\@ random variables of common distribution $p^n$, then let $M^n\coloneqq(d^n(x_i^n,x_j^n))_{i,j\in \N}$. 

We say that $\mathbf X$ is weak-leaf-tight if and only if
\begin{equation} \forall \delta>0, \lim_{k\to \infty} \limsup_{n\to \infty} \proba{d^n(x^{n}_{k+1},\{x^n_1,x^n_2,\ldots,x^n_k\})>\delta} = 0. \label{eq:weak-leaf-tight} \end{equation}
We say that $\mathbf X$ is strong-leaf-tight if and only if
\begin{equation} \forall \delta>0, \lim_{k\to \infty} \limsup_{n\to \infty} \proba{d_H(X^n,\{x^n_1,x^n_2,\ldots,x^n_k\})>\delta } = 0. \label{eq:strong-leaf-tight} \end{equation}
Those criteria were first introduced by Aldous \cite{Ald91,Ald93}.

\begin{proposition}
\label{B.1}
If $(M^n)_{n\in \N}$ converges weakly toward a random matrix $M$, $\mathbf X$ is weak-leaf-tight, and for every $n\in \N$, $(X^n,d^n)$ is a random tree equipped with random edge lengths, then $\mathbf X$ converges weakly for the GP topology toward a random measured $\R$-tree $(X,d,p)$. Furthermore, if $(x_i)_{i\in \N}$ are i.i.d. random variables of law $p$ then $M^X\coloneqq(d(x_i,x_j))_{i,j\in \N}=^{(d)} M$.
 \end{proposition}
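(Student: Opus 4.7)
The plan is to construct the limit random measured compact $\R$-tree $(X,d,p)$ directly from the random distance matrix $M$ and then invoke Lemma~\ref{equivGP} to obtain the GP convergence. By Skorokhod's representation theorem, I may assume, after enlarging the probability space, that $M^n\to M$ almost surely entrywise.

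Each $(X^n,d^n)$ is a random $\R$-tree, so almost surely the matrix $M^n=(d^n(x^n_i,x^n_j))_{i,j\in\N}$ satisfies the four-point condition: for every $a,b,c,d\in\N$, the two largest among the three sums $M^n_{a,b}+M^n_{c,d}$, $M^n_{a,c}+M^n_{b,d}$, $M^n_{a,d}+M^n_{b,c}$ are equal. Passing to the almost sure limit, $M$ satisfies the same condition almost surely. Let $\bar\N$ be the quotient of $\N$ by the equivalence relation $i\sim j\iff M_{i,j}=0$, equipped with the metric induced by $M$, and let $T$ be its metric completion. The four-point condition passes to $T$ and ensures that it is a geodesic $\R$-tree.

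The next step is to deduce compactness of $T$ from weak leaf-tightness. The assumption, together with the almost sure convergence of $M^n$, yields for every $\delta>0$ that $\lim_{k\to\infty}\P(d(x_{k+1},\{x_1,\dots,x_k\})>\delta)=0$, where $x_i$ denotes the class of $i$ in $\bar\N\subseteq T$. By exchangeability of $(x_i)_{i\in\N}$ combined with a Borel--Cantelli argument along an appropriate deterministic subsequence $k_\ell\to\infty$, this implies that almost surely the set $\bar\N=\{x_i\}_{i\in\N}$ is totally bounded in $T$. Since $\bar\N$ is dense in the complete space $T$ by construction, it follows that $T$ is a compact $\R$-tree; set $X\coloneqq T$. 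By de Finetti's theorem applied to the exchangeable sequence $(x_i)_{i\in\N}$ in the Polish space $X$, there exists an a.s. unique random probability measure $p$ on $X$ such that conditionally on $p$ the $x_i$ are i.i.d. of law $p$; in particular $\supp(p)=X$ almost surely.

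By construction, the distance matrix $M^X=(d(x_i,x_j))_{i,j\in\N}$ of an i.i.d. $p$-sample equals $M$ in distribution, and combined with the assumed convergence $M^n\to M$ and Lemma~\ref{equivGP}, this yields the weak convergence of $(X^n,d^n,p^n)$ to $(X,d,p)$ for the GP topology. The main obstacle is the second step above, namely upgrading the \emph{in probability} weak leaf-tightness into an almost sure total-boundedness statement for the countable set $\bar\N$ in $T$; this is the technical heart of Aldous's original construction and requires using exchangeability in a crucial way, typically through a diagonal extraction in $k$ combined with Borel--Cantelli.
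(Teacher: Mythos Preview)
The paper does not actually prove this proposition: its entire proof reads ``The result is directly adapted from Theorem~3 of Aldous~\cite{Ald93} using modern formalism.'' Your sketch is precisely an outline of Aldous's construction, so in that sense the approaches coincide.

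Two corrections to the sketch are worth noting. First, the completion $T$ of the countable $0$-hyperbolic space $\bar\N$ inherits the four-point condition but is \emph{not} geodesic in general (three points at pairwise distance~$2$ already fail), so your sentence ``ensures that it is a geodesic $\R$-tree'' is incorrect. The standard repair is to take instead the $\R$-tree spanned by $\bar\N$ (adjoin the branch points given by Gromov products, then complete); this changes nothing for GP since the two representatives are GP-equivalent.

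Second, and more substantially: you set out to prove that $T$ is \emph{compact} and call this ``the technical heart'' of the argument, but the proposition neither asserts compactness nor can it be deduced from the hypotheses. Take $X^n=[-n,n]$ with the truncated standard Gaussian; these are trees with edge lengths, $M^n\to M$ (the distance matrix of i.i.d.\ Gaussians), and weak leaf-tightness holds since $\E[(1-\P(|x_1-y|\le\delta))^k]\to 0$ by dominated convergence. Yet the limit $(\R,|\cdot|,\mathcal N(0,1))$ has $\{x_i\}$ a.s.\ dense in $\R$, hence not totally bounded. So your ``main obstacle'' is a red herring: once $(T,d)$ is built as a Polish $\R$-tree and $p$ is obtained from de~Finetti on the exchangeable sequence $(x_i)$ in $T$, Lemma~\ref{equivGP} gives the GP convergence directly, with no compactness needed.
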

 \begin{proof} The result is directly adapted from Theorem 3 of Aldous \cite{Ald93} using modern formalism. 
 \end{proof}
\begin{proposition}[Proposition B.1 from \cite{BBKKbis23+}]
\label{B.2}
If $(M^n)_{n\in \N}$ converges weakly toward a random matrix $M$, and $\mathbf X$ is strong-leaf-tight, then $\mathbf X$ converges weakly for the GHP topology toward a random compact measured metric space $(X,d,p)$. Furthermore, if $(x_i)_{i\in \N}$ are i.i.d. random variables of law $p$ then $M^X\coloneqq(d(x_i,x_j))_{i,j\in \N}=^{(d)} M$. In addition, a.s. $p$  has full support.
 \end{proposition}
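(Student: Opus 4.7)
The plan is to follow the classical Aldous / Gromov--Vershik route: construct the limit measured metric space from the limiting distance matrix $M$, and then upgrade the natural GP-type convergence into GHP convergence using strong-leaf-tightness.

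\textbf{Step 1 (construction of the limit).} By Skorokhod's representation theorem I may assume $M^n\to M$ almost surely in the product topology. Set $d(x_i,x_j)\coloneqq M_{i,j}$, quotient by the equivalence $\{d=0\}$, and complete the resulting metric space to obtain a separable metric space $(X,d)$. Because each $(x_i^n)_{i\in\N}$ is i.i.d.\@ of law $p^n$, the array $M^n$ is jointly exchangeable, and this passes to $M$; hence $(x_i)_{i\in\N}$ is exchangeable in $X$. De Finetti's theorem then yields a random Borel probability measure $p$ on $X$ such that the empirical measures $\frac{1}{k}\sum_{i=1}^k\delta_{x_i}$ converge weakly to $p$ almost surely and, conditionally on $p$, the $x_i$'s are i.i.d.\@ of law $p$. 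By construction $\{x_i : i\ge 1\}$ is dense in $X$, so $p$ has full support as soon as $X$ is compact.

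\textbf{Step 2 (compactness and GHP tightness).} The strong-leaf-tightness condition \eqref{eq:strong-leaf-tight}, together with the convergence $M^n\to M$, transfers to the limit: for every $\delta>0$ there exists $k(\delta)$ such that $d_H(X,\{x_1,\dots,x_{k(\delta)}\})\le\delta$ almost surely, hence $X$ is totally bounded and thus compact. The same input gives GHP-tightness of $(X^n)$: the diameter of $X^n$ is controlled by $\mathrm{diam}(\{x_1^n,\dots,x_k^n\})+2\delta$ with high probability and therefore bounded in distribution, while the existence of the $\delta$-net $\{x_1^n,\dots,x_{k(\delta)}^n\}$ with high probability provides Gromov's precompactness criterion for $(X^n)$.

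\textbf{Step 3 (identification of the GHP limit).} Let $Y=(Y,d_Y,p_Y)$ be any subsequential GHP limit of $(X^n)$. Since GHP convergence implies GP convergence and i.i.d.\@ sampling is GP-continuous, the matrix of pairwise distances of i.i.d.\@ samples from $p_Y$ has the same law as $M$, and in particular as the matrix $(d(x_i,x_j))_{i,j}$. Strong-leaf-tightness together with the Hausdorff component of GHP forces these samples to be dense in $Y$ almost surely. Combined with the uniqueness of the metric completion from $M$ and the characterization of $p$ as the almost sure weak limit of empirical measures, this yields that $Y$ is GHP-isomorphic in distribution to $(X,d,p)$. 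Since every subsequential limit has the same law, the full sequence $(X^n)$ converges in GHP to $(X,d,p)$.

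\textbf{Main obstacle.} The crucial delicate point is propagating strong-leaf-tightness through the subsequential limit: one has to couple, simultaneously, the random $\delta$-net $\{x_1^n,\dots,x_k^n\}\subset X^n$ with its counterpart $\{x_1,\dots,x_k\}\subset X$ so that both the Hausdorff distance (to guarantee the samples are dense in $Y$) and the Prokhorov distance between empirical measures and limiting measures can be controlled inside a single embedding of $X^n$ and $Y$ into a common ambient space. The first $k$ samples serve as a converging skeleton that bridges the two levels of approximation, and this is the mechanism that turns GP convergence plus strong-leaf-tightness into genuine GHP convergence.
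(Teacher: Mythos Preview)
The paper does not prove this proposition; it is quoted verbatim as Proposition~B.1 from \cite{BBKKbis23+}, with no argument given here. There is therefore nothing in the present paper to compare your proposal against.

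For what it is worth, your outline follows the standard route (Gromov--Vershik reconstruction of the limit from the exchangeable distance matrix, then an upgrade from GP to GHP via strong-leaf-tightness and subsequential identification), which is also the approach in the cited reference. Two small points: in Step~2 the $\delta$-net $\{x_1,\dots,x_{k(\delta)}\}$ is only obtained with high probability, not almost surely, so compactness of $X$ should be argued on events of probability $1-\varepsilon$ for each $\varepsilon>0$; and, as you yourself flag under ``Main obstacle'', the simultaneous coupling of the $k$-sample skeleton inside a common ambient space in Step~3 is the one step that needs genuine work, and your sketch identifies the mechanism without carrying it out.
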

\end{document}